\documentclass[preprint,12pt]{elsarticle}

\usepackage{amsthm}
\usepackage{amsmath}
\usepackage{amsfonts,amssymb,amscd}
\usepackage{mathrsfs,mathtools}
\usepackage{latexsym}
\usepackage{bm}

\usepackage[figurename=Fig.]{caption}
\usepackage{subcaption}
\usepackage{graphicx}
\usepackage[table,xcdraw]{xcolor}
\usepackage{booktabs}
\usepackage{multirow}
\usepackage{rotating,tabularx}
\usepackage{float}
\usepackage{tikz}
\usepackage{tkz-euclide}
\usepackage{tkz-berge}

\usepackage{xparse}
\usepackage{needspace}
\usepackage{enumerate,longtable,tabu}
\usepackage[margin=2.5cm]{geometry}
\usepackage[hidelinks]{hyperref}

\graphicspath{{./img/}  {./figures}  }
\usetikzlibrary{topaths,calc,decorations.pathreplacing,hobby,fit,scopes,matrix,positioning,decorations.pathmorphing,automata,backgrounds}  

\tikzset{
 dot/.style={draw, circle, inner sep=0.5pt, fill=blue}  
}  

\pgfkeys{
    /rootG/.is family, /rootG,
    default/.style = {rotate=0, scale=0.8}, 
    rotate/.estore in = \rootGRotate,
    scale/.estore in = \rootGScale,
}  

\DeclareDocumentCommand\rootG{O{}  m}  {
    \pgfkeys{/rootG, default, #1}  
    \begin{scope}  [shift={#2}, rotate=\rootGRotate,scale=\rootGScale,thick]
        \draw[fill=cyan!10!white,fill opacity=0.5] (0,0) .. controls (-1.2,1.5) and (-1.2,2.5) .. (0,2.5) 
            .. controls (1.2,2.5) and (1.2,1.5) .. (0,0);
        \node at (0,1.5) {$G_r$}  ;
    \end{scope}  
}  
\DeclareDocumentCommand\rootGs{O{}  m}  {
    \pgfkeys{/rootG, default, #1}  
    \begin{scope}  [shift={#2}, rotate=\rootGRotate,scale=\rootGScale,thick]
        \draw[fill=cyan!10!white,fill opacity=0.5] (0,0) .. controls (-1.2,1.5) and (-1.2,2.5) .. (0,2.5) 
            .. controls (1.2,2.5) and (1.2,1.5) .. (0,0);
        \node at (0,1.5) {$G_{r_4}  $}  ;
    \end{scope}  
}

\DeclareDocumentCommand\longpath{m m m g g}  {
\def \n {#3}    \def \startn {#1}    \def \endn {#2}  
\IfNoValueTF{#5}  {\def \p {p}  }  {\def \p {#5}   }  
\foreach \v in {0,...,\n}   {\coordinate (\p\v) at ($(\startn)!\v/\n!(\endn)$);}  
\IfNoValueTF{#4}  
{\foreach \x [evaluate=\x as \y using int(\x-1)] in {1,...,\n}  {\draw[black](\p\y) node {}  --(\p\x) node {}  ;}  }  
{\foreach \x [evaluate=\x as \y using int(\x-1)] in {1,...,\n}  {
\IfValueInList \x {#4}  
{\foreach \v in {4,5,6}  {\fill ($ (\p\y)!\v/10! (\p\x) $) circle (0.5pt);}  }  
{\draw[black](\p\y) node {}  --(\p\x) node {}  ;}  
}  }  }

\DeclareDocumentCommand\nstar{o m m m m g g}  {
\def \o {#2}    \def \n {#3}    \def \deg {#4}    \def \len {#5}  
\IfNoValueTF{#1}  {\def \rot {0}  }  {\def \rot {#1}   }  
\IfNoValueTF{#7}  {\def \p {p}  }  {\def \p {#7}   }  
\foreach \x in {0,1,...,\n}   {\coordinate[rotate around={-0.5*\deg+\rot: (\o)}  ](\p\x) at ($(\o)+({\deg * \x/\n}  : \len)$);}  
\foreach \x[evaluate=\x as \y using int(\x+1),evaluate=\x as \z using int(\x-1)] in {0,1,...,\n}  {
  \IfValueInList \x {#6}  {\foreach \v in {2,4,6}  {
  \fill[black] ($ (\p\z)!\v/8! (\p\y) $) circle (0.5pt);
   }  }  
  {\draw (\o) node {}  -- (\p\x)node{}  ;}  
}  
}

\DeclareDocumentCommand\star{o m m m m g g}  {
\def \o {#2}    \def \n {#3}    \def \deg {#4}    \def \len {#5}  
\IfNoValueTF{#1}  {\def \rot {0}  }  {\def \rot {#1}   }  
\IfNoValueTF{#7}  {\def \p {p}  }  {\def \p {#7}   }  
\foreach \x in {0,1,...,\n}   {\coordinate[rotate around={-0.5*\deg+\rot: (\o)}  ](\p\x) at ($(\o)+({\deg * \x/\n}  : \len)$);}  
\foreach \x[evaluate=\x as \y using int(\x+1),evaluate=\x as \z using int(\x-1)] in {0,1,...,\n}  {

  {\draw (\o) node {}  -- (\p\x)node{}  ;}  
}  
}

\DeclareDocumentCommand\fitellipsis{o m m m m}  {
\IfNoValueTF{#1}  {\def \fc {gray}  }  {\def \fc {#1}  }  
\draw[fill=\fc!25, fill opacity=0.5] let \p1=(#2), \p2=(#3), \n1={atan2(\y2-\y1,\x2-\x1)},  \n2={veclen(\y2-\y1,\x2-\x1)}   in ($ (\p1)!0.5!(\p2) $) ellipse [radius=\n2/2+#4pt, y radius=#5pt, rotate=\n1];
\foreach \v in {4,5,6}   {
\fill ($ (#2)!\v/10!(#3) $) circle (0.5pt);
}  
}

\newcommand{\dotellipsis}  [4] 
{\draw[dashed] let \p1=(#1), \p2=(#2), \n1={atan2(\y2-\y1,\x2-\x1)},  \n2={veclen(\y2-\y1,\x2-\x1)}  
    in ($ (\p1)!0.5!(\p2) $) ellipse [radius=\n2/2+#3pt, y radius=#4pt, rotate=\n1];
}

\DeclareDocumentCommand\hyperpath{o m m m }  {
\IfNoValueTF{#1}  {\def \fco {gray}  }  {\def \fco {#1}  }  
\pgfmathtruncatemacro{\len}  {#4}  
\foreach \v in {0,...,\len}   {
	\coordinate (p\v) at ($(#2)!\v/\len!(#3)$);
	 }  

\pgfmathtruncatemacro{\lena}  {\len-1}  
\foreach \cur  [count=\next from 1] in {0,...,\lena}   {
	\fitellipsis[\fco]{p\cur}  {p\next}  {8}  {6}
}  
\foreach \v in {0,...,\len}   {
   \node[shade,shading=ball,circle,ball color=red!90!white,minimum size=1.8pt,inner sep=0pt] at  (p\v) {}  ;
	 }  
}

\newcommand{\partellipsis}  [4]{
\tkzDefMidPoint(#1,#2) \tkzGetPoint{tikzC}  
\tkzCalcLength[cm](#1,tikzC)\tkzGetLength{dAC}  
\tkzFindSlopeAngle(#1,#2)\tkzGetAngle{tkzang}  
\draw[rotate around={\tkzang: (tikzC)}  ] ($(tikzC) + ({(\dAC cm + #3 pt)* cos(180-70)},  { #4 pt * sin(180-70)}  )$) arc [start angle=180-70, end angle=180+70, x radius=\dAC cm+ #3 pt, y radius= #4 pt];
\draw[rotate around={\tkzang: (tikzC)}  ] ($(tikzC) + ({(\dAC cm + #3 pt) * cos(-70)},  { #4 pt * sin(-70)}  )$) arc [start angle=-70, end angle=70, x radius=\dAC cm+ #3 pt, y radius= #4 pt];
\node[rotate around={\tkzang: (tikzC)}  ] at (tikzC) {$\cdots$}  ;
}  

\newtheorem{thm}  {Theorem}  [section]
\newtheorem{lem}  {Lemma}  [section]
\newtheorem{definition}  {Definition}  [section]
\newtheorem{pop}  {Proposition}  [section]
\newtheorem{proof*}  {proof}  [section]
\newtheorem{cor}  {Corollary}  [section]

\newtheorem{remark}  {Remark}  
  
\newtheorem{theorem}  {Theorem}  [section]

\counterwithin{equation}{section}

\newcommand{\x}  {{\ensuremath{ \boldsymbol{x}  }  }  }  
\newcommand{\y}  {{\ensuremath{ \mathbf{y}  }  }  }  
\newcommand{\z}  {{\ensuremath{ \mathbf{z}  }  }  }

\newcommand{\Dtil}{\widetilde{D}}
\newcommand{\Btil}{\widetilde{B}}
\newcommand{\Etil}{\widetilde{E}}
\newcommand{\BDtil}{\widetilde{BD}}

\newcommand{\bver}[1]{\fill (#1) circle (2pt);}

\begin{document}  
\begin{frontmatter}
\title{Uniform Hypergraphs with \texorpdfstring{\(\alpha\)}{alpha}-Spectral Radius at Most That of a Loose Cycle}

\author[1]{Hangxi Cha}
\ead{2433949@tongji.edu.cn}
\author[1]{Xiaoqi Liu}
\ead{2111152@tongji.edu.cn}
\author[1]{Haiying Shan\corref{cor1}}
\ead{shan_haiying@tongji.edu.cn}
\affiliation[1]{organization={School of Mathematical Sciences, Tongji University},
            city={Shanghai},
            postcode={200092},
            country={P.R. China}}
\cortext[cor1]{Corresponding author}
\nonumnote{This work is partially supported by the National Natural Science Foundation of China (No. 12271182).}

	\begin{abstract}  
Let \(k\geq 3\) and \(\alpha\in[0,1)\), and let \(\lambda_k(\alpha)\) denote the \(\alpha\)-spectral radius of a \(k\)-uniform loose cycle. Lu and Man classified all connected \(k\)-uniform hypergraphs with adjacency spectral radius at most \(\lambda_k(0)\). In this paper, we extend their classification to the \(\alpha\)-spectral setting. In particular, for every \(k\geq 3\) and \(0<\alpha<1\), we prove that \(\lambda_k(\alpha)\) is the smallest limit point of the \(\alpha\)-spectral radii of connected \(k\)-uniform hypergraphs. Moreover, we give a complete comparison between the \(\alpha\)-spectral radius of an arbitrary finite connected \(k\)-uniform hypergraph \(H\) and that of a loose cycle by determining the sign of
\(
\rho_\alpha(H)-\lambda_k(\alpha).
\)
As a consequence, for each fixed \(0<\alpha<1\), we classify all finite
connected \(k\)-uniform hypergraphs with \(\alpha\)-spectral radius at most
\(\lambda_k(\alpha)\).
	\end{abstract}  
\begin{keyword}
uniform hypergraph \sep \(\alpha\)-spectral radius \sep adjacency tensor \sep loose cycle \sep \((\alpha,\beta)\)-labeling method

\MSC[2020] 05C50 \sep 05C75
\end{keyword}
\end{frontmatter}
\section{Introduction}  
In 1970, Smith~\cite{Smith} classified the connected graphs whose adjacency spectral radius is at most \(2\). The graphs with spectral radius less than \(2\) are the simply laced Dynkin diagrams, whereas those with spectral radius equal to \(2\) are the extended simply laced Dynkin diagrams. Guo, Mohar, Gavrilyuk, and Munemasa~\cite{GUO20172616,GavrilyukMunemasa2023} classified the digraphs with Hermitian spectral radius below \(2\). Cvetkovi\'c, Doob, and Gutman~\cite{MR683990} began the determination of graphs with spectral radius between \(2\) and \(\sqrt{2+\sqrt{5}}\); Brouwer and Neumaier~\cite{BrouwerNeumaier1989} completed it. Wang et al.~\cite{MR4548954} investigated signed graphs whose spectral radius does not exceed \(\sqrt{2+\sqrt{5}}\). Wang et al.~\cite{WANG20081606} studied graphs with spectral radii close to \(\frac{3}{2}\sqrt{2}\). Woo and Neumaier~\cite{MR2365422} and Lan and Lu~\cite{MR3034539} further investigated connected graphs \(H\) with spectral radii between \(\sqrt{2+\sqrt{5}}\) and \(\frac{3}{2}\sqrt{2}\). Hoffman~\cite{MR347860} proved that \(2\) is the smallest limit point of the spectral radii of connected graphs. Wang et al.~\cite{Belardo2020GraphsWA} subsequently showed that the smallest limit point of the \(A_\alpha\)-spectral radii of connected graphs is also \(2\) and characterized the connected graphs whose \(A_\alpha\)-spectral radius is at most \(2\). For hypergraphs, Lu and Man~\cite{LuMan} classified the connected uniform hypergraphs with adjacency spectral radius at most \(4^{1/k}\), the adjacency spectral radius of a \(k\)-uniform loose cycle.

In graph theory, an internal path of a graph \(H\) is a sequence of vertices
\(u_1,\ldots,u_n\) such that the \(u_i\) are distinct, except possibly
\(u_1=u_n\), and
\[
d_H(u_1)\geq3,\qquad
d_H(u_2)=\cdots=d_H(u_{n-1})=2,\qquad
d_H(u_n)\geq3,
\]
with \(u_iu_{i+1}\in E(H)\) for every \(i\in[n-1]\).

A hypergraph \(H\) consists of a nonempty vertex set \(V(H)\) and an edge
set \(E(H)\), where every edge \(e\in E(H)\) is a nonempty subset of
\(V(H)\)~\cite{berge}. Its order is \(|V(H)|\). For an integer \(k\geq2\),
the hypergraph \(H\) is \(k\)-uniform if every edge has size \(k\).
Throughout this paper, unless otherwise stated, all hypergraphs are assumed
to be connected and uniform with at least one edge.
Classification statements and membership in the families below are understood
up to hypergraph isomorphism.

Let \(H=(V,E)\) be a simple graph. The \(k\)-th power hypergraph of \(H\)
is obtained by adding \(k-2\) new vertices to each edge of \(H\), with
distinct new vertices used for distinct edges. The \(k\)-th powers of an
ordinary path and cycle with \(m\) edges are called, respectively, a loose
path and a loose cycle of length \(m\), and are denoted by \(P_m^k\) and
\(C_m^k\). In addition, \(C_2^k\) denotes the hypergraph consisting of two
distinct hyperedges that intersect in exactly two vertices.
A \emph{pendent loose path}, or simply a \emph{pendent path}, meets the rest
of the hypergraph only in one end vertex, called its root; its length is its
number of edges.

For positive integers \(k\) and \(n\), a real tensor
\(\mathcal{T}=(t_{i_1\cdots i_k})\) of order \(k\) and dimension \(n\) is a
multidimensional array satisfying
\[
t_{i_1\cdots i_k}\in\mathbb{R}
\qquad\text{for all }i_1,\ldots,i_k\in[n]:=\{1,2,\ldots,n\}.
\]
The tensor \(\mathcal{T}\) is symmetric if \(t_{i_1\cdots i_k}\) is invariant
under every permutation of its indices.

A real symmetric tensor \(\mathcal{T}\) of order \(k\) and dimension \(n\)
defines the associated homogeneous polynomial of degree \(k\)
\[
F_{\mathcal{T}}(x)=\mathcal{T}x^k
=\sum_{i_1,\ldots,i_k=1}^{n}t_{i_1\cdots i_k}x_{i_1}\cdots x_{i_k}.
\]

For \(x\in\mathbb{R}^n\), the quantity \(\mathcal{T}x^k\) is real, while
\(\mathcal{T}x^{k-1}\in\mathbb{R}^n\) has \(i\)th component
\[
(\mathcal{T}x^{k-1})_i
=\sum_{i_2,\ldots,i_k=1}^nt_{ii_2\cdots i_k}x_{i_2}\cdots x_{i_k}.
\]

\begin{definition}[\cite{Qi2005,QI2013228}]
Let \(\mathcal{T}\) be a real tensor of order \(k\) and dimension \(n\).
A scalar \(\lambda\in\mathbb{C}\) is an eigenvalue of \(\mathcal{T}\), with
corresponding eigenvector \(0\ne x\in\mathbb{C}^n\), if
\[
\mathcal{T}x^{k-1}=\lambda x^{[k-1]},
\]
where \(x^{[k-1]}\in\mathbb{C}^n\) is defined by
\((x^{[k-1]})_i=x_i^{k-1}\).
\end{definition}

In 2013, Shao~\cite{SHAO20132350} defined the general product of two tensors,
which is useful for studying the spectra of nonnegative tensors.

\begin{definition}[\cite{SHAO20132350}]
Let \(\mathcal{A}\) and \(\mathcal{B}\) be tensors of dimension \(n\) and
orders \(m\geq2\) and \(k\geq1\), respectively. Their product
\(\mathcal{AB}\) is the tensor \(\mathcal{C}\) of order
\((m-1)(k-1)+1\) and dimension \(n\), defined by
\[
c_{i\alpha_1\cdots\alpha_{m-1}}
=\sum_{i_2,\ldots,i_m=1}^{n}
a_{ii_2\cdots i_m}b_{i_2\alpha_1}\cdots b_{i_m\alpha_{m-1}},
\]
where \(i\in[n]\) and
\(\alpha_1,\ldots,\alpha_{m-1}\in[n]^{k-1}\).
\end{definition}

The spectral radius of $\mathcal{T}$ is defined as $\rho(\mathcal{T})=\max \{|\lambda|:\lambda \ \text{is an eigenvalue of}\  \mathcal{T}\}$.

For \(k\geq3\), let \(H=(V(H),E(H))\) be a \(k\)-uniform hypergraph on
\(n\) vertices. The adjacency tensor~\cite{CooperDutle} of
\(H\) is the order-\(k\), dimension-\(n\) tensor
\(\mathcal{A}(H)=(a_{i_1\cdots i_k})\), where
\[
a_{i_1\cdots i_k}=\begin{cases}
\frac{1}{(k-1)!},\ &\text{if}\ \{i_1,\cdots,i_k\}\in E(H),\\
0,\ &\text{otherwise}.
\end{cases}
\]
Let \(\mathcal{D}(H)\) be the order-\(k\), dimension-\(n\) diagonal tensor
whose \(i\)th diagonal entry is the degree \(d_i\) of vertex \(i\).
Then \(\mathcal{L}(H)=\mathcal{D}(H)-\mathcal{A}(H)\) and
\(\mathcal{Q}(H)=\mathcal{D}(H)+\mathcal{A}(H)\) are, respectively, the
Laplacian and signless Laplacian tensors of \(H\). Both
\(\mathcal{A}(H)\) and \(\mathcal{Q}(H)\) are nonnegative and symmetric.

Motivated by Nikiforov's \(A_\alpha\)-matrix~\cite{Nikiforov},
Lin, Guo, and Zhou~\cite{MR4048055} considered the corresponding convex
combination of the degree and adjacency tensors:
\[
\mathcal{A}_{\alpha}(H)=\alpha\mathcal{D}(H)+(1-\alpha)\mathcal{A}(H),
\qquad 0\leq\alpha<1.
\]
This tensor is called the \(A_\alpha\)-tensor of \(H\). Further results on the
\(\alpha\)-spectral radius of uniform hypergraphs can be found
in~\cite{WangShanWang2020,GuoZhou2020,HouChangShi2020}.

The spectral radius of $\mathcal{A}_{\alpha}(H)$ is called the ${\alpha}$-spectral radius of $H$ and denoted by $\rho_{\alpha}(H)$. Then $\rho_0(H)$ is the spectral radius of $\mathcal{A}(H)$, which is called the adjacency spectral radius of $H$. Moreover, $2\rho_{\frac{1}{2}}(H)$ is the spectral radius of $\mathcal{Q}(H)$, which is called the signless Laplacian spectral radius of $H$.

For $k\geq 3$, let $H$ be a $k$-uniform hypergraph with $V(H)=[n]$ and $x$ be an $n$-dimensional column vector. Clearly,
\[
\mathcal{A}_{\alpha}(H)x^k
=\alpha\sum_{i\in V(H)}d_ix_i^k
+(1-\alpha)\sum_{e\in E(H)}kx^e,
\]
or equivalently, 
\begin{equation}\label{eq:rayleigh-poly}
    \mathcal{A}_{\alpha}(H)x^k
    =\sum_{e\in E(H)}
    \left(\alpha\sum_{i\in e}x_i^k+(1-\alpha)kx^e\right),
\end{equation}
and for any $i\in V(H)$, $$(\mathcal{A}_{\alpha}(H)x^{k-1})_i=\alpha d_ix_i^{k-1}+(1-\alpha)\sum_{e\in E_i(H)}x^{e\setminus \{i\}},$$
where $x^e=x_{i_1}x_{i_2}\cdots x_{i_k}$ for $e=\{i_1,i_2,\cdots,i_k\}\in E(H)$.

It follows from \eqref{eq:rayleigh-poly} that
\begin{pop}[{\cite[Theorem~1.2]{ShanWangWang}}]\label{subgraph}
Suppose that \(H\) is a uniform hypergraph and \(H'\) is a subhypergraph of
\(H\). Then \(\rho_\alpha(H')\leq\rho_\alpha(H)\). Moreover, if \(H\) is
connected and \(H'\) is a proper subhypergraph, then
\(\rho_\alpha(H')<\rho_\alpha(H)\).
\end{pop}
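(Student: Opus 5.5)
The plan is to use the variational characterization of \(\rho_\alpha\) for nonnegative symmetric tensors together with Perron--Frobenius theory for weakly irreducible nonnegative tensors. For a symmetric nonnegative tensor \(\mathcal{T}\) of order \(k\), one has
\[
\rho(\mathcal{T})=\max\{\mathcal{T}x^k:\ x\ge 0,\ \|x\|_k=1\}.
\]
When \(\mathcal{T}\) is weakly irreducible, which holds for \(\mathcal{A}_\alpha(H)\) when \(H\) is connected and \(\alpha<1\), the maximum is attained at a unique positive unit vector, namely the Perron vector.

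\textbf{Monotonicity.} Let \(H'\subseteq H\), with \(V(H')\subseteq V(H)\) and \(E(H')\subseteq E(H)\). Choose a nonnegative unit vector \(y\) on \(V(H')\) attaining \(\rho_\alpha(H')\), and extend it by zeros to a vector \(x\) on \(V(H)\). Each edge \(e\in E(H')\) contributes the same nonnegative term \(\alpha\sum_{i\in e}x_i^k+(1-\alpha)kx^e\) to \(\mathcal{A}_\alpha(H)x^k\) as it does to \(\mathcal{A}_\alpha(H')y^k\). Every other edge of \(H\) contributes a nonnegative amount, by \eqref{eq:rayleigh-poly}. Hence
\[
\rho_\alpha(H')=\mathcal{A}_\alpha(H')y^k\le \mathcal{A}_\alpha(H)x^k\le\rho_\alpha(H).
\]
If \(H'\) has isolated vertices in \(H\)'s sense, this causes no trouble: the degree terms are accounted edge by edge, and since \(H'\) is assumed to have at least one edge, \(\rho_\alpha(H')\) is still the maximum of this form.

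\textbf{Strictness.} Suppose \(H\) is connected, \(H'\) is proper, and equality \(\rho_\alpha(H')=\rho_\alpha(H)\) holds. Then \(x\) is a maximizer for \(H\). Since \(\mathcal{A}_\alpha(H)\) is weakly irreducible, the maximizer is the unique positive Perron vector. This step uses the standard fact that any nonnegative maximizer of the Rayleigh quotient is a nonnegative eigenvector, obtained via Lagrange multipliers on the sphere, and that for a weakly irreducible tensor every nonnegative eigenvector with eigenvalue \(\rho\) is positive.

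There are two cases.
\begin{enumerate}
\item If \(V(H')\ne V(H)\), then \(x\) has a zero coordinate, which is a contradiction.
\item If \(V(H')=V(H)\) but some edge \(e\in E(H)\setminus E(H')\) exists, then \(x=y\). By the first case we may assume \(y\) is positive. The term contributed by \(e\) is at least \(\alpha\sum_{i\in e}y_i^k+(1-\alpha)ky^e>0\), because \(\alpha<1\) and \(y>0\). This gives \(\mathcal{A}_\alpha(H)x^k>\rho_\alpha(H')\), again a contradiction.
\end{enumerate}

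The main obstacle is the positivity claim in the first case: a nonnegative maximizer must be a Perron eigenvector, and hence positive. I would argue this directly rather than citing it. At a maximizer, the Lagrange condition gives \(\mathcal{A}_\alpha(H)x^{k-1}=\rho x^{[k-1]}\) on the support of \(x\). If \(x_v=0\) for some vertex \(v\) lying in an edge \(e\) that meets the support, take the perturbation \(x+\varepsilon \mathbf{1}_v\) and renormalize. The cross term from \(e\) grows like \(\varepsilon\prod_{i\in e\setminus v}x_i\), while the normalization changes only by order \(\varepsilon^k\). This product is positive when all other vertices of \(e\) are in the support, so the Rayleigh quotient strictly increases, contradicting maximality. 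Connectivity then propagates positivity from one vertex to all of \(V(H)\).

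The subtle point is that the other vertices of \(e\) might also be zero. To handle this, I would instead invoke the known Perron--Frobenius theorem for weakly irreducible nonnegative tensors (Friedland–Gaubert–Han), which guarantees a unique positive eigenvector associated with \(\rho\). I would then compare \(x\) with it through the convexity argument for the maximizer, namely a Collatz–Wielandt inequality.
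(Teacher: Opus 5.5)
Your proposal is correct and follows the route the paper intends when it says the proposition follows from the edge-wise form \eqref{eq:rayleigh-poly}: extend a nonnegative maximizer for \(H'\) by zeros to get monotonicity, and use positivity of any nonnegative maximizer for the connected \(H\) to get strictness. The positivity subtlety you flagged is easy to close in either of two ways. First, take an edge \(e\) meeting both the support and the zero set \(Z\) of \(x\), and perturb all vertices of \(e\cap Z\) at once: the cross term gains order \(\varepsilon^{m}\) with \(m=|e\cap Z|\le k-1\), while the normalization costs only order \(\varepsilon^{k}\). Alternatively, use the comparison you allude to: with \(u>0\) the Perron vector and \(c=\max_i x_i/u_i\), the eigen-equation at a vertex where \(x_i=cu_i\) forces \(x_j=cu_j\) on every edge through that vertex, and connectivity then gives \(x=cu>0\).
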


Lu and
Man~\cite{LuMan} established a uniform-hypergraph analogue. They classified all connected \(k\)-uniform
hypergraphs with adjacency spectral radius at most
\[
        \rho_0(C^k)=4^{1/k},
\]
where \(C^k\) denotes a $k$-uniform loose cycle. They also showed that the smallest limit point
of the spectral radii of connected $k$-uniform hypergraphs is $4^{1/k}$.

Shan et al.~\cite{ShanWangWang} introduced the $(\alpha,\beta)$-labeling method for computing the $\alpha$-spectral radius of $k$-uniform hypergraphs and determined the $\alpha$-spectral radius of a $k$-uniform loose cycle.

\begin{lem}[{\cite[Theorem~3.3 and Remark~1(1)]{ShanWangWang}}]\label{loose spec}
The \(\alpha\)-spectral radius of a \(k\)-uniform loose cycle \(C^k\), as
well as that of \(C_2^k\), is the largest root of the equation
\begin{equation}\label{eq:cycle}
    \left(\frac{x}{2}-\alpha\right)^2(x-\alpha)^{k-2}-(1-\alpha)^k=0.
\end{equation}

\end{lem}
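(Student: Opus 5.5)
We verify the lemma directly from the eigen-equations, using the symmetry of the loose cycle to reduce the problem to a two-parameter ansatz, and then appeal to Perron--Frobenius theory for nonnegative tensors to identify the value obtained as the spectral radius.

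\emph{The ansatz.} Let $C^k=C_m^k$ with $m\ge 3$ or $m=2$. Every vertex is either a \emph{core} vertex (degree $2$, lying in two consecutive edges) or a \emph{pendant} vertex (degree $1$). The automorphism group acts transitively on core vertices and on pendant vertices. Since $C^k$ is connected, $\mathcal{A}_\alpha(C^k)$ is weakly irreducible, so its Perron vector is positive and unique up to scaling. Hence the Perron vector is constant on each orbit: $x_v=a$ for core vertices and $x_v=b$ for pendant vertices. Each edge contains exactly two core vertices and $k-2$ pendant vertices. For $C_2^k$ the two shared vertices play the role of core vertices, and each edge again has two of them and $k-2$ vertices of degree $1$.

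\emph{The eigen-equations.} With $\rho=\rho_\alpha(C^k)$, the equations $(\mathcal{A}_\alpha x^{k-1})_v=\rho x_v^{k-1}$ become
\[
2\alpha a^{k-1}+2(1-\alpha)\,a\,b^{k-2}=\rho\, a^{k-1}, \qquad
\alpha b^{k-1}+(1-\alpha)\,a^2b^{k-3}=\rho\, b^{k-1}.
\]
In the first equation, a core vertex lies in two edges, and in each of them the other vertices are one core vertex and $k-2$ pendants. In the second, a pendant vertex lies in one edge together with two core vertices and $k-3$ other pendants.

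From these we obtain
\[
\Big(\tfrac{\rho}{2}-\alpha\Big)a^{k-2}=(1-\alpha)b^{k-2}, \qquad (\rho-\alpha)b^{2}=(1-\alpha)a^{2}.
\]
Put $t=b/a>0$. Then $(\rho/2-\alpha)=(1-\alpha)t^{k-2}$ and $(\rho-\alpha)=(1-\alpha)t^{-2}$. Raising the first relation to the power $2$, the second to the power $k-2$, and multiplying, $t$ cancels:
\[
\Big(\tfrac{\rho}{2}-\alpha\Big)^2(\rho-\alpha)^{k-2}=(1-\alpha)^{k}.
\]
So $\rho$ is a root of \eqref{eq:cycle}, and $\rho/2-\alpha>0$ and $\rho-\alpha>0$ because $a,b>0$.

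\emph{Identifying the largest root.} Set $f(x)=(x/2-\alpha)^2(x-\alpha)^{k-2}$. On $x>2\alpha$ the function $f$ is strictly increasing from $0$ to $\infty$. Hence \eqref{eq:cycle} has exactly one root with $x>2\alpha$, and this root is the one $\rho$ must equal.

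It remains to check that it is the largest real root. Any root $x\le 2\alpha$ is smaller than it. This requires no sign analysis, since the unique root in $(2\alpha,\infty)$ already exceeds every real number $\le 2\alpha$.

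\emph{Main obstacle.} The step needing care is the justification that the Perron vector is constant on orbits. I would argue it from uniqueness of the positive eigenvector for weakly irreducible nonnegative tensors (Chang--Pearson--Zhang, Friedland--Gaubert--Han): applying an automorphism permutes the positive eigenvector into another positive eigenvector for the same $\rho$, so it must be the same vector.

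Alternatively, one can avoid symmetry altogether. The positive vector $(a,b)$ built from the root $\rho^*>2\alpha$ (choose $t$ from the second relation, and then the first relation holds by the equation) is a positive eigenvector. Any positive eigenvector of a weakly irreducible nonnegative tensor corresponds to $\rho(\mathcal{A}_\alpha)$, so $\rho_\alpha(C^k)=\rho^*$. This direct route is the cleaner one, and I would present it as the proof.
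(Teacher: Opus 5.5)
Your proof is correct, and both of your routes work; in particular, the direct construction of a positive eigenvector from the root $\rho^*>2\alpha$ is airtight. It differs from the source mainly in how it is packaged. The paper does not prove the lemma itself; it imports it from Shan--Wang--Wang, whose argument the paper re-runs in the proof of Theorem~\ref{neverexceed}.

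That argument uses the $(\alpha,\beta)$-labeling $B(v,e)=1/d(v)$, which equals $1/2$ at degree-two vertices and $1$ at leaves. This labeling is consistent, and it is $(\alpha,\beta)$-normal exactly when $\beta^{-1/k}$ satisfies \eqref{eq:cycle}. The standing requirement $B(v,e)>\alpha/\rho$ at the degree-two vertices forces $\rho>2\alpha$, which selects the largest root. Lemma~\ref{lem:label-comparison}(3) then identifies that root with $\rho_\alpha$.

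You instead write the eigenvector down explicitly. You then use the Perron--Frobenius fact that a positive eigenvector of a nonnegative tensor belongs to the spectral radius, which follows from the Collatz--Wielandt bounds. In your argument, positivity of $a,b$ plays the role of $B(v,e)>\alpha/\rho$. In your direct construction, this is exactly where $\rho^*>2\alpha$ is needed to take the positive square root.

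The two certificates are the same object. Under the dictionary $B(v,e)=\alpha/\rho+(1-\alpha)x^e/(\rho x_v^k)$, your vector gives $B=1/2$ at core vertices and $B=1$ at pendant vertices.

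Each route has its advantage. Yours is more elementary and self-contained. The labeling route is what the paper needs later: the same assignment $B(v,e)=1/d(v)$ immediately gives the subnormal comparisons of Theorem~\ref{neverexceed} for every hypergraph with $\Delta\le 2$ and $s_H(e)\le 2$, without having to solve for an eigenvector.
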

Thus, $\rho_\alpha(C^k)$ is independent of the length of the loose cycle. In the rest of this paper, we write $\rho_\alpha(C^k)=\lambda_k(\alpha)$.

Lemma~\ref{loose spec} readily yields
\begin{equation}
 \lambda_k(0)=4^{1/k},\qquad
 \lambda_k(\alpha)>2\alpha,\qquad
 \lim_{\alpha\to1-}\lambda_k(\alpha)=2.\label{2alpha}
\end{equation}

Evaluating the Rayleigh quotient at the constant unit \(k\)-norm vector gives
the value \(k/(k-1)\) when substituted into~\eqref{eq:rayleigh-poly}.
Therefore,
\begin{equation*}
        \lambda_k(\alpha)\geq\frac{k}{k-1}>1.
\end{equation*} In particular, \begin{equation}\label{eq:beta-range}
        0<\lambda_k(\alpha)^{-k}<1,
\end{equation}

The main results of this paper are stated as follows. The first one is about
the smallest limit point for the $\alpha$-spectral radius of $k$-uniform hypergraphs.
\begin{thm}\label{thmlimit}
For every fixed \(\alpha\in(0,1)\) and integer \(k\geq3\), the smallest
limit point of the \(\alpha\)-spectral radii of connected \(k\)-uniform
hypergraphs is \(\lambda_k(\alpha)\).
\end{thm}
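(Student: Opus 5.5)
Two things need to be shown: that \(\lambda_k(\alpha)\) is a limit point of \(\alpha\)-spectral radii, and that no smaller value is one. I will rely on the \((\alpha,\beta)\)-labeling method of~\cite{ShanWangWang} and on the strict monotonicity of Proposition~\ref{subgraph}.

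\emph{\(\lambda_k(\alpha)\) is a limit point.} Take a sequence of connected hypergraphs \(T_n\), for instance loose-path-type trees such as the \(k\)-th power of a path with a pendant edge attached near one end, or the hypergraph made from two loose paths of length \(n\) glued at a vertex with one extra edge. These should satisfy \(\rho_\alpha(T_n)<\lambda_k(\alpha)\), have strictly increasing spectral radii, and converge to \(\lambda_k(\alpha)\). The natural candidate is the loose path \(P_n^k\) itself. It is a proper subhypergraph of \(C_{n+1}^k\), so Proposition~\ref{subgraph} gives \(\rho_\alpha(P_n^k)<\lambda_k(\alpha)\), and it also gives strict increase in \(n\).

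For convergence, I would write down the \((\alpha,\beta)\)-labeling recursion along a pendent path. With \(x=\rho\), labels in each edge satisfy a linear fractional recursion \(\beta_{j+1}=f_x(\beta_j)\) in the weighted incidence variables. The fixed-point equation of \(f_x\) reduces to~\eqref{eq:cycle}, so at \(x=\lambda_k(\alpha)\) the map has a double (parabolic) fixed point. The loose path is consistently labeled at \(x\) exactly when the iteration starting from the end-vertex value reaches the terminal condition after \(n\) steps. A standard monotonicity and continuity argument on \(f_x\) then shows that \(\rho_\alpha(P_n^k)\) increases to the fixed-point threshold \(\lambda_k(\alpha)\).

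A more elementary alternative for the limit is to use the Perron vector of \(C_m^k\), which is constant on the cycle vertices by symmetry. Truncating it gives a test vector on \(P_n^k\) whose Rayleigh quotient
\[
\mathcal A_\alpha(H)x^k / \|x\|_k^k
\]
loses only \(O(1/n)\) relative to \(\lambda_k(\alpha)\), because only boundary edges are affected. Hence \(\liminf_n \rho_\alpha(P_n^k)\ge\lambda_k(\alpha)\). I will use this Rayleigh-quotient argument as the primary route, since it avoids the recursion analysis.

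\emph{No smaller limit point.} It suffices to show that for every \(\varepsilon>0\) only finitely many connected \(k\)-uniform hypergraphs have \(\rho_\alpha(H)<\lambda_k(\alpha)-\varepsilon\). I would argue this structurally, in three steps.

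\begin{enumerate}
\item Bound the degrees. Since \(\rho_\alpha(H)\ge \alpha\Delta(H)\) (use the unit vector at a max-degree vertex), and since \(\alpha>0\) while \(\lambda_k(\alpha)\) is finite, the maximum degree is bounded. This is where \(\alpha>0\) is used; for \(\alpha=0\) one needs a star argument instead.
\item Bound the local structure. Any \(H\) with \(\rho_\alpha(H)<\lambda_k(\alpha)\) contains no cycle, no \(C_2^k\) and no other \(\lambda_k\)-critical configuration, by Proposition~\ref{subgraph} together with Lemma~\ref{loose spec}. In particular two edges share at most one vertex and \(H\) is a hypertree.
\item Bound the branching. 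Each "branching" edge (an edge containing at least three vertices of degree \(\ge2\)) or vertex of degree \(\ge3\) forces, through a labeling or test-vector argument, a fixed gap. Specifically, a hypertree containing two such branch points joined by a long internal path has \(\alpha\)-spectral radius at least \(\lambda_k(\alpha)\), by comparison with a "doubly branched" subhypergraph whose radius tends to a value \(\ge\lambda_k(\alpha)\). Hence \(H\) is a loose path with a bounded number of bounded-length attachments, i.e.\ a generalized star \(S(a,b,c)\)-type tree with three long legs.
\end{enumerate}

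Finally, I must show that for such families with all legs long, \(\rho_\alpha\to\lambda_k(\alpha)\). This is again by the Rayleigh-quotient argument above: each long leg carries the near-constant vector. Therefore a hypergraph with \(\rho_\alpha\le\lambda_k(\alpha)-\varepsilon\) has bounded leg lengths, so only finitely many such hypergraphs exist, and the radii below \(\lambda_k(\alpha)-\varepsilon\) form a finite set.

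The main obstacle is step 3: proving that two branch points, or a branch point with an extra feature, push \(\rho_\alpha\) to at least \(\lambda_k(\alpha)\). This requires exhibiting explicit consistent \((\alpha,\beta)\)-labelings at \(x=\lambda_k(\alpha)\) for the minimal forbidden configurations, such as the \(k\)-th power analogues of \(\widetilde D_n\) and \(\widetilde E\)-type trees, and checking the sub/supercritical inequalities. I would first try to reduce all cases to the single "two branch points" configuration, using the fixed-point analysis of the map \(f_{\lambda_k(\alpha)}\).
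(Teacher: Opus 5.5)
Your first half (loose paths, strict increase via Proposition~\ref{subgraph}, and the truncated Perron vector of the loose cycle) is exactly the paper's Lemma~\ref{pnlimit}.

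The second half has a genuine gap at Step~3. The assertion that two branch points joined by a long internal path force $\rho_\alpha\geq\lambda_k(\alpha)$ is false when both branch points are branching edges. Take $B_n'^{k}=G_{1,1:n-6:1,1}^{k}$ with $q=n-6$. Here $L_G=R_G=t(2-t)^2=1/z(t)$. With $x=z(t)$, the equation $\Psi(t)=1$ from \eqref{eq:Psi} reads $2-f^q(x)=x$, i.e. $(x-1)(qx-q-2)=0$. For $q\geq1$ the root $x=(q+2)/q$ exceeds $\vartheta_q$, so it lies outside $J_G$. Hence $t_1$ is given by $t(2-t)^2=1$, that is $t_1=(3-\sqrt5)/2$ for every $n\geq6$. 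By Theorem~\ref{thm:unique-equality-point}, $\rho_\alpha(B_n'^{k})<\lambda_k(\alpha)$ for all $n\geq6$ whenever $0<\alpha<\psi_k\bigl((3-\sqrt5)/2\bigr)$, even though the internal path is arbitrarily long.

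Consequently the labelings you plan to construct in Step~3 cannot exist for this configuration. The reduction to a generalized star with boundedly many attachments fails, and your finiteness conclusion does not follow from what precedes it. Separately, to bound leg lengths you need that \emph{one} long leg already forces $\rho_\alpha$ close to $\lambda_k(\alpha)$, not that all legs are long. That is true, because a long leg is itself a long loose path.

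Step~3 is in fact unnecessary. By Step~1, $\Delta(H)\leq\lambda_k(\alpha)/\alpha$ whenever $\rho_\alpha(H)<\lambda_k(\alpha)$. Also, two edges of such an $H$ meet in at most one vertex, by the $C_2^k$ case together with Lemma~\ref{sc:lem:large-intersection}. A ball of radius $r$ then contains at most $\sum_{j=0}^{r}(\Delta(k-1))^j$ vertices, so large order forces large diameter. A shortest Berge path between two distant vertices has pairwise disjoint non-consecutive edges, and its consecutive edges meet in exactly one vertex. It is therefore a copy of $P_r^k$ with $r$ large, and $\rho_\alpha(H)\geq\rho_\alpha(P_r^k)$ is close to $\lambda_k(\alpha)$ by your first half. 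This yields the finiteness statement directly.

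The paper takes a shorter route. It cites Lemma~\ref{treelimit} ($P_m^k$ minimizes $\rho_\alpha$ among connected $k$-uniform hypergraphs with $m$ edges). It then observes that any sequence with pairwise distinct radii must have $|E(H_n)|\to\infty$, so no degree bound or structural analysis is needed. The repaired version of your argument trades that cited extremal theorem for an elementary diameter argument that uses $\alpha>0$.
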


Next, in Theorem~\ref{thm:global-five-type}, we determine the sign of $\rho_\alpha(H)-\lambda_k(\alpha)$ for every finite connected $k$-uniform hypergraph $H$ and every $0<\alpha<1$.

\begin{thm}
\label{thm:global-five-type}
Let \(k\geq3\), and let \(H\) be a finite connected \(k\)-uniform
hypergraph with at least one edge. Exactly one of the five hypotheses in
items~(1)--(5) below holds:
\begin{enumerate}[(1)]
\item if \(H\) is a loose path, then \(\rho_\alpha(H)-\lambda_k(\alpha)<0\) for every \(0<\alpha<1\).

\item if \(H\) is a loose cycle or $C_2^k$,
then \(\rho_\alpha(H)-\lambda_k(\alpha)=0\) for every \(0<\alpha<1\).
\item if \(H\) is a member of the list in
Proposition~\ref{prop:luman-inputs}(1), other than a loose cycle or $C_2^k$, then 
\(\rho_\alpha(H)-\lambda_k(\alpha)>0\) for every \(0<\alpha<1\).
\item if \(H\) is a non-path member of the list in
Proposition~\ref{prop:luman-inputs}(2), then there is a
unique \(\alpha_1(H,k)\in(0,1)\) such that
\[
\begin{aligned}
 &\rho_\alpha(H)-\lambda_k(\alpha)<0
     &&(0\leq\alpha<\alpha_1(H,k)),\\
 &\rho_{\alpha_1(H,k)}(H)-\lambda_k(\alpha_1(H,k))=0,&\\
 &\rho_\alpha(H)-\lambda_k(\alpha)>0
     &&(\alpha_1(H,k)<\alpha<1).
\end{aligned}
\]
\item if \(\rho_0(H)>4^{1/k}\), then
\(\rho_\alpha(H)-\lambda_k(\alpha)>0\) for every \(0<\alpha<1\).
\end{enumerate}
\end{thm}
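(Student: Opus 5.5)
The plan is to split according to Lu and Man's classification, which is recorded in Proposition~\ref{prop:luman-inputs}. Part~(1) of that list contains the connected \(k\)-uniform hypergraphs with \(\rho_0(H)=4^{1/k}\), including loose cycles and \(C_2^k\). Part~(2) contains those with \(\rho_0(H)<4^{1/k}\), including loose paths. Every \(H\) then falls into exactly one of the five items, so exclusivity is automatic, and it remains to determine the sign of \(f_H(\alpha):=\rho_\alpha(H)-\lambda_k(\alpha)\) in each case.

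\textbf{Items~(1), (2) and (5).} Item~(2) is Lemma~\ref{loose spec}. For item~(1), a loose path \(P_m^k\) is a proper subhypergraph of the loose cycle \(C_{m+1}^k\); the cases \(m\le1\) embed in \(C_2^k\) or \(C_3^k\). Proposition~\ref{subgraph} then gives \(\rho_\alpha(P_m^k)<\lambda_k(\alpha)\) for every \(\alpha\).

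For items~(3) and~(5) I would use a monotonicity principle in \(\alpha\). Write \(\mathcal{A}_\alpha(H)=(1-\alpha)\bigl(\mathcal{A}(H)+t\mathcal{D}(H)\bigr)\) with \(t=\alpha/(1-\alpha)\). Here \(\lambda_k(\alpha)/(1-\alpha)=\mu\) solves \((\mu/2-t)^2(\mu-t)^{k-2}=1\), i.e.\ the cycle has an \((\alpha,\beta)\)-labeling. The plan is to show that the sign of \(f_H\) is governed by the existence of a consistent weighted labeling. Concretely, fix \(\lambda=\lambda_k(\alpha)\) and run the \((\alpha,\beta)\)-labeling method of~\cite{ShanWangWang}: \(H\) is ``\(\alpha\)-subnormal'' (\(\rho_\alpha(H)<\lambda\)), ``normal'' (\(=\)) or ``supernormal'' (\(>\)) according to whether a labeling with vertex sums \(\lambda\) and edge weights satisfying \(\prod\) conditions exists with inequality or equality.

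For item~(5), if \(\rho_0(H)>4^{1/k}\), then \(H\) contains a minimal forbidden subhypergraph from the Lu--Man list, i.e.\ one that strictly contains a loose cycle or \(C_2^k\) or some other member of list~(1). Proposition~\ref{subgraph} gives \(\rho_\alpha(H)\ge\rho_\alpha(H')>\lambda_k(\alpha)\). Item~(3) is identical, since each member of list~(1) other than a cycle or \(C_2^k\) is either a ``generalized'' extended Dynkin-type hypergraph containing a loose cycle properly, or a tree-like one. For the tree-like ones I would produce an explicit labeling showing \(\rho_\alpha>\lambda_k(\alpha)\) for \(\alpha>0\). The positive degree term favors vertices of degree \(\ge3\), whereas the cycle has maximum degree \(2\); the test vector is the \(\alpha=0\) Perron vector, and a Rayleigh quotient comparison via~\eqref{eq:rayleigh-poly} gives the sign.

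\textbf{Item~(4), the main obstacle.} I would prove existence and uniqueness of the crossing \(\alpha_1\). Existence follows from the intermediate value theorem. We have \(f_H(0)<0\) by Lu--Man. As \(\alpha\to1^-\), \(\rho_\alpha(H)\to\Delta(H)\ge3\) because \(H\) is not a path, while \(\lambda_k(\alpha)\to2\) by~\eqref{2alpha}; hence \(f_H>0\) near \(1\).

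For uniqueness I would show \(f_H(\alpha)=0\Rightarrow f_H'(\alpha)>0\). The derivative of \(\rho_\alpha(H)\) is \(\sum_i(d_i-\text{adj part})x_i^k/(1-\alpha)\)-type, computed with the unit Perron vector \(x\). Specifically, \(\frac{d}{d\alpha}\rho_\alpha=\frac{\rho_\alpha-\mathcal{A}(H)x^k}{1-\alpha}\cdot\)\((\ldots)\), which reduces to \(\frac{\sum d_ix_i^k-\rho_\alpha}{1-\alpha}\)-like expressions. The claim is then that at a crossing point the weighted degree \(\sum d_ix_i^k\) for \(H\) exceeds the corresponding quantity for the cycle, which is \(2\). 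Alternatively, and this is what I would try first, I would parametrize the crossing via the \((\alpha,\beta)\)-labeling. Each member of list~(2) is built from a few pendent paths attached at degree-\(\ge3\) vertices, so the equation \(\rho_\alpha(H)=\lambda_k(\alpha)\) becomes an explicit equation in \(\beta=\lambda_k(\alpha)^{-k}\in(0,1)\) by~\eqref{eq:beta-range}, via the path recursion. I would then show that this equation has a unique root in \(\beta\) by monotonicity of the path-ratio sequences, and that \(\beta\) is monotone in \(\alpha\). Handling all families of list~(2) uniformly through such branch-weight functions is where I expect most of the work.
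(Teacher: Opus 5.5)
\paragraph{Item (5).} The main gap is here. You assume a Smith-type property: every connected $H$ with $\rho_0(H)>4^{1/k}$ properly contains a member of the list in Proposition~\ref{prop:luman-inputs}(1). For hypergraphs this is false, so Proposition~\ref{subgraph} cannot finish the argument.

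\begin{itemize}
\item For $k\geq4$, two $k$-edges meeting in $s\geq3$ vertices have $\rho_0=2^{s/k}>4^{1/k}$ by \eqref{sc:eq:large-intersection} at $\alpha=0$. Yet they contain nothing from that list.
\item Among linear hypertrees, $F_{3,3,3}^{3}$ has $\rho_0>4^{1/3}$, since $\Phi(1/2)=\tfrac12(5/4)^3=\tfrac{125}{128}<1$. Every proper connected subhypergraph of it lies in $F_{2,3,3}^{3}$, which belongs to list~(2).
\item The same happens for $H_{1,1,1,5}^{(4)}$, $G_{1,1:0:2,2}^{3}$, $Se^{(k)}(k)$ with $k\geq5$, and $G_{1,1:q:1,3}^{3}$ with $7\leq q\leq11$. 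The last family reaches $G_{1,1:6:1,3}^{3}$ only through $2$-bridge contractions, not as a subhypergraph.
\end{itemize}

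This failure is why the paper needs all of Section~5:
\begin{itemize}
\item Lemma~\ref{sc:lem:large-intersection} handles nonlinear $H$.
\item Lemma~\ref{lem:two-bridge-alpha} shows that a $2$-bridge expansion preserves a strict excess over $\lambda_k(\alpha)$.
\item Lemma~\ref{lem:uniform-extension-alpha} compares different uniformities at a matched parameter $t$ (with $\alpha_m=\psi_m(t)$), not at a common $\alpha$.
\item The structural reductions in Lemmas~\ref{lem:four-core-reduction} and~\ref{lem:three-core-reduction} end in seven cores outside list~(1): $H_1,H_2,H_{1,1,1,5}^{(4)},G_1,F_{3,3,3}^{3},G_{1,1:0:2,2}^{3},M$. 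Each of these then needs an explicit strictly supernormal labeling for every $t\in(0,1/2)$.
\end{itemize}
Your proposal has no substitute for any of this.

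\paragraph{Item (4).} This is also incomplete.

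Your existence argument needs $\Delta(H)\geq3$. But $D_n'$, $B_n$, $B_n'$, $\bar B_n$, $H^{k}_{1,1,1,\ell}$, $G^{k}_{1,1:q:1,3}$ and the exceptional $F$'s all have maximum degree $2$. For these, $\rho_\alpha(H)$ and $\lambda_k(\alpha)$ both tend to $2$, so the limit decides nothing. You need the rate comparison from the converse part of Theorem~\ref{neverexceed}: $\lambda_k(\alpha)-2\alpha=O((1-\alpha)^{k/2})$, against $\rho_\alpha(H)-2\alpha\geq\frac rk(1-\alpha)^{k/r}$ for an edge with $r\geq3$ degree-two vertices.

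For uniqueness, your derivative route takes the cycle's weighted degree $\mathcal D(C^k)y^k$ to be $2$. It is strictly smaller, because the leaf coordinates are positive. The inequality $\mathcal D(H)x^k>\mathcal D(C^k)y^k$ at a crossing is never justified.

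The labeling route is the right idea, but its substance is missing:
\begin{itemize}
\item The key step is the substitution $B(v,e)=y+t(1-y)a(v,e)$ with $\alpha=\psi_k(t)$. It makes every normality condition depend on $t$ alone: $\prod a_i=t^{2-s}$ on edges, and vertex sums $2$ and $4-1/t$.
\item You then need strict monotonicity in $t$ of the residuals $H_E,\Phi,\Psi,H_{BD,q}$.
\item You also need a separate strictly supernormal assignment wherever values propagated along an internal path stop being positive (Lemma~\ref{lem:propagation-domain}).
\end{itemize}

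\paragraph{Item (3).} Using the $\alpha=0$ Perron vector $x$ as a fixed test vector gives a lower bound that is affine in $\alpha$. For $\Dtil_n^{3}$ this bound ends at $\mathcal D(H)x^3=(5n-8)/(3n-6)<2$ when $\alpha=1$. Since $\lambda_3(\alpha)\to 2$, the bound drops below $\lambda_3(\alpha)$ near $\alpha=1$. You need $\alpha$-dependent labelings, as in Propositions~\ref{prop:infinite-threshold-labelings} and~\ref{prop:finite-threshold-labelings}.
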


For a fixed parameter, we use the following family.
\begin{definition}\label{def:fixed-alpha-family}
Let \(k\geq3\) and \(0<\alpha<1\). Let
\(\mathcal B_{k,\alpha}\) consist of the non-path hypergraphs \(H\) in
Proposition~\ref{prop:luman-inputs}(2) for which
\(\alpha\leq\alpha_1(H,k)\), where \(\alpha_1(H,k)\) is the transition
value in Theorem~\ref{thm:global-five-type}(4).
\end{definition}

\begin{thm}\label{thm:fixed-alpha-classification}
Let \(k\geq3\), fix \(0<\alpha<1\), and let \(H\) be a finite connected
\(k\)-uniform hypergraph with at least one edge. Then
\[
 \rho_\alpha(H)\leq\lambda_k(\alpha)
 \quad\Longleftrightarrow\quad
 H\in\{P_m^k:m\geq1\}\cup\{C_m^k:m\geq2\}
 \cup\mathcal B_{k,\alpha}.
\]
\end{thm}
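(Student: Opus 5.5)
The plan is to derive Theorem~\ref{thm:fixed-alpha-classification} directly from the five-way case split in Theorem~\ref{thm:global-five-type}. That split covers every finite connected \(k\)-uniform hypergraph \(H\) with at least one edge. The only implicit input is the exhaustiveness used there: a hypergraph with \(\rho_0(H)\leq 4^{1/k}\) either lies in the list of Proposition~\ref{prop:luman-inputs}(1) (equality case, containing all loose cycles and \(C_2^k\)) or in the list of Proposition~\ref{prop:luman-inputs}(2) (strict case, containing all loose paths). This is the Lu--Man classification, which I take as given through Proposition~\ref{prop:luman-inputs}. So for a fixed \(\alpha\in(0,1)\), I go through items (1)--(5) and record when \(\rho_\alpha(H)\leq\lambda_k(\alpha)\).

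\emph{Direction \(\Leftarrow\).} If \(H=P_m^k\), item (1) gives strict inequality. If \(H=C_m^k\) with \(m\geq3\), or \(H=C_2^k\) (which is what \(C_m^k\) means for \(m=2\)), item (2), equivalently Lemma~\ref{loose spec}, gives equality. If \(H\in\mathcal B_{k,\alpha}\), then \(H\) is a non-path member of list (2) with \(\alpha\leq\alpha_1(H,k)\). Item (4) gives \(\rho_\alpha(H)<\lambda_k(\alpha)\) when \(\alpha<\alpha_1(H,k)\), and equality when \(\alpha=\alpha_1(H,k)\).

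\emph{Direction \(\Rightarrow\).} Suppose \(\rho_\alpha(H)\leq\lambda_k(\alpha)\). Since exactly one of items (1)--(5) applies to \(H\), I exclude the bad cases one by one.
\begin{itemize}
\item Item (3) is impossible, since it gives strict \(>\) for every \(\alpha\in(0,1)\).
\item Item (5) is impossible for the same reason.
\item If item (4) applies, then \(\alpha>\alpha_1(H,k)\) would force \(\rho_\alpha(H)>\lambda_k(\alpha)\). Hence \(\alpha\leq\alpha_1(H,k)\), and by Definition~\ref{def:fixed-alpha-family} this means \(H\in\mathcal B_{k,\alpha}\).
\end{itemize}
The remaining items (1) and (2) put \(H\) in the loose paths, the loose cycles, or \(C_2^k\), as claimed.

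The main point to check is bookkeeping rather than analysis. The five items must really be exhaustive and mutually exclusive, and the notation \(C_m^k\) for \(m\geq2\) must match exactly the objects in item (2), with \(C_2^k\) playing the role of \(m=2\). Exhaustiveness splits into two cases:
\begin{itemize}
\item either \(\rho_0(H)>4^{1/k}\), which is item (5);
\item or \(\rho_0(H)\leq 4^{1/k}\), in which case the Lu--Man lists of Proposition~\ref{prop:luman-inputs} place \(H\) in item (1), (2), (3) or (4).
\end{itemize}
Once this is granted, the argument is a direct case check with no further spectral estimates.
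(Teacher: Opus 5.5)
Your proposal is correct and is essentially the paper's proof: a direct case check through the five alternatives of Theorem~\ref{thm:global-five-type}, with the fourth alternative unpacked via Definition~\ref{def:fixed-alpha-family} to give exactly \(\alpha\leq\alpha_1(H,k)\). The paper also cites Theorem~\ref{thm:unique-equality-point} and Proposition~\ref{prop:branching-residuals}, but only to link \(\mathcal B_{k,\alpha}\) with its explicit residual description, which the statement as defined does not need.
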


The rest of the paper is organized as follows. Section~2 recalls the necessary
preliminaries and the classification of Lu and Man. Section~3 determines the
smallest limit point of the \(\alpha\)-spectral radii of \(k\)-uniform
hypergraphs. Section~4 characterizes \(
\rho_\alpha(H)-\lambda_k(\alpha)
\) for every connected \(k\)-uniform hypergraph whose adjacency spectral radius does not exceed that of a loose cycle. Section~5 investigates connected \(k\)-uniform hypergraphs whose adjacency spectral radius is greater than \(4^{1/k}\).
Section~\ref{sec:fixed-alpha} gives an explicit description of
\(\mathcal B_{k,\alpha}\) and proves Theorem~\ref{thm:fixed-alpha-classification}.

\section{Preliminaries}\label{sec:preliminaries}
\subsection{Hypergraph families with adjacency spectral radius at most
\texorpdfstring{\(4^{1/k}\)}{the loose-cycle threshold}}\label{sec:structures}
A hypergraph is \emph{linear} if any two distinct edges meet in at most one vertex. Connectedness is understood in the vertex--edge walk sense.  The incidence graph \(\mathcal I(H)\) is the bipartite graph with vertex classes \(V(H)\) and \(E(H)\), in which \(v\) and \(e\) are adjacent exactly when \(v\in e\).  We call \(H\) a \emph{hypertree} if \(\mathcal I(H)\) is a tree. 

A vertex of degree \(1\) is called a \emph{leaf vertex}. A vertex of degree at least three is a \emph{branching vertex}. In this paper,
an edge containing at least three non-leaf vertices is a \emph{branching
edge}. For \(k\geq3\), we use \emph{internal path} as follows. Between two
branching edges, it is a maximal loose path
whose first and last edges are the two branching edges and which has no other
branching edge or branching vertex internally. Its internal path parameter
\(q\) is the number of edges strictly between the two terminal branching
edges. An internal path from a branching edge \(e\) to a branching vertex
\(v\) is defined analogously, with \(e\) as its first edge and \(v\) as its
terminal vertex; here \(q\) is the number of internal degree-two propagation
steps. Thus \(q=0\) means, respectively, that the two branching edges meet or
that \(v\in e\). An internal path between two branching vertices is measured
by its ordinary edge length rather than by a parameter \(q\).

If \(H\) is \(r\)-uniform and \(k\geq r\), its \emph{\(k\)-uniform extension} is obtained by adding \(k-r\) new leaf vertices to every edge, with distinct new vertices used for distinct edges.  The case \(k=r+1\) is a one-step extension.  The uniform extension of a hypertree is a hypertree.  A \(k\)-uniform hypergraph \(H=(V,E)\), where \(k\geq3\), is called \emph{reducible} if every edge \(e\in E\) contains at least one leaf vertex, and \emph{irreducible} otherwise. Every reducible linear \(k\)-uniform hypergraph with \(k\geq4\) is a one-step extension of a linear \((k-1)\)-uniform hypergraph. In the uniformity reductions used below, we stop when the uniformity reaches \(3\) or when the current hypergraph is irreducible.

The hypergraph families used in the classification are as follows.  Unless otherwise stated, pendent-path lengths are positive integers and internal path parameters are nonnegative integers.

\begin{itemize}
\item \(E_{a,b,c}^{k}\) is obtained by attaching three pendent loose paths
of lengths \(a,b,c\) to a common branching vertex.
\item \(S_4^{k}\) is the four-edge hyperstar obtained by attaching four
length-one pendent loose paths to a common branching vertex.
\item \(F_{a,b,c}^{k}\) is obtained by attaching pendent loose paths of
lengths \(a,b,c\) at three distinct vertices of one branching edge.
\item \(H_{a,b,c,d}^{k}\), defined for \(k\geq4\), is obtained by attaching
four pendent loose paths at four distinct vertices of one branching edge.
\item \(G_{a,b:q:c,d}^{k}\) has a central loose path with \(q+2\) edges.
Its two end edges are branching edges.  Two pendent loose paths of lengths
\(a,b\) are attached at the two attachment vertices of the first end edge,
and paths of lengths \(c,d\) are attached similarly at the other end.
\item \(\Dtil_n^{k}\), \(n\geq5\), consists of two branching vertices,
each carrying two length-one pendent paths, joined by an internal path of length
\(n-4\).
\item \(\Btil_n^{k}=G_{1,2:n-8:1,2}^{k}\), \(n\geq8\).
\item \(\BDtil_n^{k}\), \(n\geq6\), consists of a branching vertex with
two length-one pendent paths and a branching edge with pendent paths of
lengths \(1\) and \(2\), joined by an internal path with parameter
\(n-6\).  When
\(n=6\), this parameter is zero, meaning that the branching vertex lies on
the branching edge.
\item \(BD_n^{k}\), \(n\geq5\), is defined in the same way as
\(\BDtil_n^{k}\), except that both pendent paths at the branching edge have
length one and the internal path parameter is \(n-5\).
\end{itemize}

\begin{figure}[htbp]
\centering
\newcommand{\brokenhyperpath}[2]{%
    \coordinate (armstart) at ($(#1)!0.37!(#2)$);
    \coordinate (armend) at ($(#1)!0.63!(#2)$);
    \hyperpath{#1}{armstart}{2}
    {\footnotesize\partellipsis{armstart}{armend}{8}{6}}
    \hyperpath{armend}{#2}{2}
}
	\begin{tikzpicture}[thick,scale=0.60]
		\begin{scope}[shift={(-9,0)}]
			\node at(0,6.5){\large$E_{a,b,c}^{k}$};
			\bver{0,0}
			\brokenhyperpath{0,0}{0,5.4}
			\brokenhyperpath{0,0}{-4.5,-3.9}
			\brokenhyperpath{0,0}{4.5,-3.9}
            \draw [decoration={brace,amplitude=6pt,raise=6pt},  decorate] (0,0) --  (0,5.4) node [midway, shift={(-18pt,1pt )}, scale=0.8]{$P_a$}  ;
            \draw [decoration={brace,amplitude=6pt,raise=6pt},  decorate] (0,0) --  (-4.5,-3.9) node [midway, shift={(14pt,-15pt )}, scale=0.8]{$P_b$}  ;
            \draw [decoration={brace,amplitude=6pt,raise=6pt},  decorate] (0,0) --  (4.5,-3.9) node [midway, shift={(18pt,12pt )}, scale=0.8]{$P_c$}  ;
		\end{scope}
		\begin{scope}[shift={(0,3)}]
			\node at(0,3.5){\large$S_{4}^{k}$};
			\bver{0,0}
			\hyperpath{0,0}{0,1.5}{1}
			\hyperpath{0,0}{-1.5,0}{1}
			\hyperpath{0,0}{0,-1.5}{1}
			\hyperpath{0,0}{1.5,0}{1}
		\end{scope}
		\begin{scope}[shift={(8,0)}]
		\node at(0,6.5){\large$F_{a,b,c}^{k}$};
	      \coordinate (v1) at (-0.6,0);
		\coordinate (v2) at (0,0);
		\coordinate (v3) at (0.6,0);	
		\fitellipsis[gray]{v1}{v3}{3}{5}
		\node[circle,fill=red!80,inner sep=0.8pt] at (v1) {};
		\node[circle,fill=red!80,inner sep=0.8pt] at (v2) {};
		\node[circle,fill=red!80,inner sep=0.8pt] at (v3) {};
		\coordinate (a) at (-2.7,4.5);
		\brokenhyperpath{v1}{a}
		\coordinate (b) at (0,-4.8);
		\brokenhyperpath{v2}{b}
		\coordinate (c) at (2.7,4.5);
		\brokenhyperpath{v3}{c}
        \draw [decoration={brace,amplitude=6pt,raise=6pt},  decorate] (-0.6,0) --  (-2.7,4.5) node [midway, shift={(-19pt,-2pt )}, scale=0.8]{$P_a$}  ;
        \draw [decoration=
        {brace,amplitude=6pt,raise=6pt},  decorate] (0,0) --  (0,-4.8) node [midway, shift={(16pt,0pt )}, scale=0.8]{$P_b$}  ;
        \draw [decoration=
        {brace,amplitude=6pt,raise=6pt,mirror},  decorate] (0.6,0) --  (2.7,4.5) node [midway, shift={(18pt,-5pt )}, scale=0.8]{$P_c$}  ;
		\end{scope}
		\begin{scope}[shift={(-7,-13)}]
		\node at(0,6.5){\large$H_{a,b,c,d}^{k}$};
	      \coordinate (v1) at (-0.8,0);
		\coordinate (v2) at (-0.2,0);
        \coordinate (v3) at (0.2,0);
		\coordinate (v4) at (0.8,0);	
		\fitellipsis[gray]{v1}{v4}{3}{5}
		\node[circle,fill=red!80,inner sep=0.8pt] at (v1) {};
		\node[circle,fill=red!80,inner sep=0.8pt] at (v2) {};
		\node[circle,fill=red!80,inner sep=0.8pt] at (v3) {};
		\node[circle,fill=red!80,inner sep=0.8pt] at (v4) {};
		\coordinate (a) at (-2.3,4.5);
		\brokenhyperpath{v1}{a}
		\coordinate (b) at (-0.5,-4.8);
		\brokenhyperpath{v2}{b}
        \coordinate (c) at (0.5,4.8);
		\brokenhyperpath{v3}{c}
		\coordinate (d) at (2.3,4.5);
		\brokenhyperpath{v4}{d}
        \draw [decoration={brace,amplitude=6pt,raise=6pt},  decorate] (-0.8,0) --  (-2.3,4.5) node [midway, shift={(-19pt,-2pt )}, scale=0.8]{$P_a$}  ;
        \draw [decoration=
        {brace,amplitude=6pt,raise=6pt},  decorate] (-0.2,0) --  (-0.5,-4.8) node [midway, shift={(16pt,0pt )}, scale=0.8]{$P_b$}  ;
        \draw [decoration=
        {brace,amplitude=6pt,raise=6pt},  decorate] (0.2,0) --  (0.5,4.8) node [midway, shift={(-16pt,0pt )}, scale=0.8]{$P_c$}  ;
        \draw [decoration=
        {brace,amplitude=6pt,raise=6pt,mirror},  decorate] (0.8,0) --  (2.3,4.5) node [midway, shift={(18pt,-5pt )}, scale=0.8]{$P_d$}  ;
		\end{scope}
		\begin{scope}[shift={(6.5,-13)}]
        \coordinate (A) at (-3,0) ;
        \coordinate (B) at (2.4,0);
        \coordinate (C) at (-1,0);
        \coordinate (D) at (0.4,0);
        \coordinate (A1) at (-2.6,0) ;
        \coordinate (B1) at (2,0);
        \hyperpath{A}  {C}  {2}  
        \partellipsis{C}  {D}  {8}  {6}
        \hyperpath{B}  {D}  {2}  
        \node[circle,fill=red!80,inner sep=0.8pt] at (A1) {};
        \node[circle,fill=red!80,inner sep=0.8pt] at (B1) {};
        \brokenhyperpath{A}{-4.8,4.5}
        \brokenhyperpath{B}{4.2,4.5}
        \brokenhyperpath{A1}{-1.1,4.5}
        \brokenhyperpath{B1}{0.5,4.5}
		\node at(0,6.5){\large$G_{a,b:q:c,d}^{k}$};
        \draw [decoration={brace,amplitude=6pt,raise=6pt,mirror},  decorate] (-3,0) --  (2.6,0) node [midway, shift={(1pt,-20pt )}, scale=0.8]{$P_{q+2}$}  ;
        \draw [decoration=
        {brace,amplitude=6pt,raise=6pt},  decorate] (-3,0) --  (-4.8,4.5) node [midway, shift={(-15pt,-7pt )}, scale=0.8]{$P_a$}  ;
        \draw [decoration=
        {brace,amplitude=6pt,raise=6pt},  decorate] (-2.6,0) --  (-1.1,4.5) node [midway, shift={(-17pt,6pt )}, scale=0.8]{$P_b$}  ;
        \draw [decoration=
        {brace,amplitude=6pt,raise=6pt},  decorate] (B) --  (4.2,4.5) node [midway, shift={(-17pt,6pt )}, scale=0.8]{$P_c$}  ;
        \draw [decoration=
        {brace,amplitude=6pt,raise=6pt},  decorate] (B1) --  (0.5,4.5) node [midway, shift={(-20pt,-7pt )}, scale=0.8]{$P_d$}  ;

		\end{scope}
		\begin{scope}[shift={(-7,-23)}]
		\node at(-0.3,3){\large$\widetilde{D}_{n}^{k}$};
        \coordinate (A) at (-3,0.5) ;
        \coordinate (B) at (2.4,0.5);
        \coordinate (C) at (-1,0.5);
        \coordinate (D) at (0.4,0.5);
        \coordinate (A1) at (-2.6,0.5) ;
        \coordinate (B1) at (2,0.5);
        \hyperpath{A}  {C}  {2}  
        \partellipsis{C}  {D}  {8}  {6}
        \hyperpath{B}  {D}  {2}  
        \hyperpath{A}{-3.7,1.3}{1}
        \hyperpath{B}{3.1,1.3}{1}
        \hyperpath{A}{-3.7,-0.3}{1}
        \hyperpath{B}{3.1,-0.3}{1}
        \draw [decoration={brace,amplitude=6pt,raise=6pt,mirror},  decorate] (-3,0.5) --  (2.6,0.5) node [midway, shift={(1pt,-20pt )}, scale=0.8]{$P_{n-4}$}  ;

		\end{scope}
		\begin{scope}[shift={(6.5,-23)}]
        \coordinate (A) at (-3,0.5) ;
        \coordinate (B) at (2.4,0.5);
        \coordinate (C) at (-1,0.5);
        \coordinate (D) at (0.4,0.5);
        \coordinate (A1) at (-2.6,0.5) ;
        \coordinate (B1) at (2,0.5);
        \hyperpath{A}  {C}  {2}  
        \partellipsis{C}  {D}  {8}  {6}
        \hyperpath{B}  {D}  {2}  
        \node[circle,fill=red!80,inner sep=0.8pt] at (B1) {};
        \hyperpath{A}{-3.7,1.3}{1}
        \hyperpath{A}{-3.7,-0.3}{1}
        \hyperpath{B}{3.9,2}{2}
        \hyperpath{B1}{1.3,1.3}{1}
		\node at(0,3){\large $\widetilde{BD}_n^k$};
        \draw [decoration={brace,amplitude=6pt,raise=6pt,mirror},  decorate] (-3,0.5) --  (2.6,0.5) node [midway, shift={(1pt,-20pt )}, scale=0.8]{$P_{n-5}$}  ;
		\end{scope}

	\end{tikzpicture}
\caption{The hypergraph families defined above.}
\label{fig:templates}
\end{figure}

For later reference, we use the following identities:
\begin{equation*}
\begin{gathered}
 E_6=E_{1,2,2},\qquad E_7=E_{1,2,3},\qquad E_8=E_{1,2,4},\qquad
 \Etil_6=E_{2,2,2},\\\qquad \Etil_7=E_{1,3,3},\qquad
 \Etil_8=E_{1,2,5},\qquad D_n=E_{1,1,n-2},
\end{gathered}
\end{equation*}
and
\begingroup\footnotesize
\begin{equation*}
\begin{gathered}
 D'_n=F_{1,1,n-3},\qquad B_n=F_{1,2,n-4},\qquad
 B'_n=G_{1,1:n-6:1,1},\qquad
 \bar B_n=G_{1,1:n-7:1,2},\qquad
 \Btil_n=G_{1,2:n-8:1,2}.
\end{gathered}
\end{equation*}
\endgroup
The superscript \({}^{k}\) will be restored whenever the uniformity is material.

\Needspace{8\baselineskip}
\begin{pop}[{\cite[Theorems~1--2, Corollary~2, and Theorems~4--5]{LuMan}}]\label{prop:luman-inputs}
\noindent(1) The connected hypergraphs with adjacency spectral radius
\(4^{1/k}\) are the loose cycles, \(C_2^{k}\), the three infinite families
\[
 \Dtil_n^{k}\ (n\geq5),\qquad
 \Btil_n^{k}\ (n\geq8),\qquad
 \BDtil_n^{k}\ (n\geq6),
\]
and the exceptional hypergraphs
\[
\begin{gathered}
 S_4^{k},\ \Etil_6^{k},\ \Etil_7^{k},\ \Etil_8^{k},
 F_{2,3,4}^{k},\ F_{2,2,7}^{k},\ F_{1,5,6}^{k},\\
 F_{1,4,8}^{k},\ F_{1,3,14}^{k},\
 G_{1,1:0:1,4}^{k},\ G_{1,1:6:1,3}^{k}.
\end{gathered}
\] for $k\geq3$, and \(H_{1,1,2,2}^{k}\) for $k\geq 4$.

\noindent(2) The connected hypergraphs with adjacency spectral radius
strictly below \(4^{1/k}\) are the paths \(P_n^{k}\), the
families
\[
\begin{gathered}
 D_n^{k}=E_{1,1,n-2}^{k}\ (n\geq3),\qquad
 E_6^{k},E_7^{k},E_8^{k},\\
 BD_n^{k}\ (n\geq5),\qquad
 D_n'^{k}=F_{1,1,n-3}^{k}\ (n\geq4),\\
 B_n^{k}=F_{1,2,n-4}^{k}\ (n\geq5),\qquad
 B_n'^{k}=G_{1,1:n-6:1,1}^{k}\ (n\geq6),\\
 \bar B_n^{k}=G_{1,1:n-7:1,2}^{k}\ (n\geq7),
\end{gathered}
\]
and the exceptional hypergraphs
\[
\begin{gathered}
 F_{2,3,3}^{k},\qquad F_{2,2,j}^{k}\ (2\leq j\leq6),\qquad
 F_{1,3,j}^{k}\ (3\leq j\leq13),\\
 F_{1,4,j}^{k}\ (4\leq j\leq7),\qquad F_{1,5,5}^{k},\qquad
 G_{1,1:q:1,3}^{k}\ (0\leq q\leq5).
\end{gathered}
\] for \(k\geq3\), and \(H_{1,1,1,\ell}^{k}\ (1\leq\ell\leq4)\) for \(k\geq4\).
\end{pop}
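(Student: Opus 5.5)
This proposition restates results of Lu and Man, so the proof is a careful translation rather than a new argument. I would not reprove their classification. Instead I would check, item by item, that the families listed in (1) and (2) coincide up to isomorphism with the hypergraphs in \cite[Theorems~1--2, Corollary~2, Theorems~4--5]{LuMan}, once their notation is rewritten in the templates \(E^k_{a,b,c}\), \(F^k_{a,b,c}\), \(G^k_{a,b:q:c,d}\), \(H^k_{a,b,c,d}\), \(\Dtil^k_n\), \(\Btil^k_n\), \(\BDtil^k_n\), \(BD^k_n\) of Section~\ref{sec:structures}.

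First, I would handle the hypergraphs that are \(k\)-th powers of ordinary graphs. These are the loose paths and cycles, \(S_4^k\), \(E_{a,b,c}^k\) (including \(D_n^k\), \(E_6^k,E_7^k,E_8^k\) and \(\Etil_6^k,\Etil_7^k,\Etil_8^k\)), and \(\Dtil_n^k\). For a power hypergraph \(G^k\) one has the identity \(\rho_0(G^k)=\rho_0(G)^{2/k}\). Hence \(\rho_0(G^k)\) is at most, respectively equal to, \(4^{1/k}\) exactly when \(\rho_0(G)\) is at most, respectively equal to, \(2\). These members of the lists therefore correspond to Smith's simply laced Dynkin and extended Dynkin diagrams. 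The one exception is \(C_2^k\), which must be checked separately; Lemma~\ref{loose spec} at \(\alpha=0\) already gives \(\rho_0(C_2^k)=4^{1/k}\).

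Second, I would treat the non-power hypergraphs, namely those containing a branching edge: the \(F\), \(G\), \(H\), \(\BDtil\) and \(BD\) families and their exceptional members. Lu and Man classify these with \(\alpha\)-normal labelings, where a weighted labeling is consistent with \(\rho_0=4^{1/k}\), and subnormal or supernormal labelings certify \(<\) or \(>\). Their list is stated in terms of the reducible/irreducible structure and uniform extensions. Accordingly, I would invoke the remark that a reducible linear \(k\)-uniform hypergraph with \(k\ge4\) is a one-step extension, and that each extension step preserves the classification type. This explains why \(H_{1,1,2,2}^k\) and the \(H_{1,1,1,\ell}^k\) appear only for \(k\ge4\).

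The main obstacle is the bookkeeping of the dictionary rather than any analysis. Their parameters count vertices, while ours count edges or the internal parameter \(q\), so it is easy to shift an index by one. Examples are \(\Btil_n=G_{1,2:n-8:1,2}\) versus \(\BDtil_n\) with parameter \(n-6\), and the ranges \(F_{1,3,j}\) with \(3\le j\le 13\) versus the boundary case \(F_{1,3,14}\). I would check each boundary pair directly: the first excluded member of each finite family in (2) must be the corresponding exceptional member of (1). For instance, \(F_{2,2,6}\) precedes \(F_{2,2,7}\), \(F_{1,4,7}\) precedes \(F_{1,4,8}\), and \(G_{1,1:5:1,3}\) precedes \(G_{1,1:6:1,3}\). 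This check uses Proposition~\ref{subgraph} at \(\alpha=0\) together with the equality cases.
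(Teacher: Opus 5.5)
Your overall route is the same as the paper's. The paper gives no proof of this proposition and simply imports it from Lu and Man, so a faithful translation of their theorems is what is needed, and your power-hypergraph part via $\rho_0(G^k)=\rho_0(G)^{2/k}$ and Smith's theorem is sound.

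One step of your proposed verification would fail, however. The rule that ``the first excluded member of each finite family in (2) must be the corresponding exceptional member of (1)'' is false, already for the family $H^{k}_{1,1,1,\ell}$. At $\alpha=0$ one has $\beta=\frac14$. With leaf weights $1$, normality along a pendent path of length $\ell$ forces the weight $\frac{\ell+2}{2(\ell+1)}$ at its root incidence on the branching edge. The branching edge of $H^{k}_{1,1,1,\ell}$ therefore carries the product
\[
 \Bigl(\frac34\Bigr)^3\frac{\ell+2}{2(\ell+1)},
\]
which exceeds $\frac14$ exactly when $27(\ell+2)>32(\ell+1)$, i.e.\ $5\ell<22$. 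Hence the labeling is strictly subnormal for $\ell\le4$, consistently strictly supernormal for $\ell\ge5$, and never normal. So $\rho_0(H^{k}_{1,1,1,5})>4^{1/k}$, and no member of this family belongs to (1). The paper itself uses $H^{(4)}_{1,1,1,5}$ as one of the hypergraphs strictly above the threshold in the proof of Theorem~\ref{thm:above-threshold}.

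Proposition~\ref{subgraph} only makes $\rho_0$ strictly increasing along such a chain. Unlike in Smith's setting, nothing forces the chain to attain $4^{1/k}$, and chains through a branching edge can jump over it (compare $F^k_{2,3,3}\subset F^k_{3,3,3}$). Your boundary test would therefore report a spurious discrepancy and must not be used to add members to (1). The only valid check is term-by-term agreement with Lu and Man's lists.

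Your dictionary should also record the normalization. Lu and Man use the adjacency tensor without the factor $1/(k-1)!$, so their threshold is $(k-1)!\,4^{1/k}$. All their spectral radii must be divided by $(k-1)!$ to match $\mathcal A(H)$ here. Moreover, the parameter in their ``$\alpha$-normal'' labelings is the quantity called $\beta$ here (equal to $\frac14$ at the threshold), not the $A_\alpha$ parameter.
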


\subsection{\texorpdfstring{$(\alpha,\beta)$}{(alpha,beta)}-labeling method}
Let \(H\) be a connected hypergraph, let \(0\leq\alpha<1\), and let \(0<\beta\leq1\).
Write \(\rho=\beta^{-1/k}\).  A weighted incidence matrix is a matrix
\(B=(B(v,e))\) such that \(B(v,e)>0\) when \(v\in e\) and \(B(v,e)=0\)
otherwise.  Throughout, we require \(B(v,e)>\alpha/\rho\) on every
incidence. Such a matrix is \((\alpha,\beta)\)-normal if
\begin{equation*}
 \sum_{e\ni v}B(v,e)=1,
 \qquad
 \prod_{v\in e}\frac{B(v,e)-\alpha/\rho}{1-\alpha}=\beta
\end{equation*}
for every vertex and edge.  It is subnormal if the vertex equalities are
replaced by \(\leq\) and the edge equalities by \(\geq\), and supernormal if
the inequalities are reversed.  A subnormal or supernormal labeling is
\emph{strict} if at least one of its defining inequalities is strict.

For a Berge cycle $v_0e_1v_1e_2\ldots e_s,v_s=v_0$, the labeling is \emph{consistent} if
\begin{equation*}
 \prod_{i=1}^s
 \frac{B(v_i,e_i)-\alpha/\rho}
      {B(v_{i-1},e_i)-\alpha/\rho}=1.
\end{equation*}
A labeling is consistent if this holds for every Berge cycle.  A hypertree has no
such cycle, so consistency is automatic.

\begin{lem}[{\cite[Definitions~3.1--3.3, Lemmas~3.1--3.2, and Theorem~3.1]{ShanWangWang}}]\label{lem:label-comparison}
Let \(H\) be a connected \(k\)-uniform hypergraph. 
\begin{enumerate}[(1)]
\item An \((\alpha,\beta)\)-subnormal labeling implies
\(\rho_\alpha(H)\leq\beta^{-1/k}\). Moreover, if the labeling is strict,
then \(\rho_\alpha(H)<\beta^{-1/k}\).
\item A consistently strictly \((\alpha,\beta)\)-supernormal labeling
implies \(\rho_\alpha(H)>\beta^{-1/k}\).
\item A consistent \((\alpha,\beta)\)-normal labeling implies
\(\rho_\alpha(H)=\beta^{-1/k}\).
\end{enumerate}
\end{lem}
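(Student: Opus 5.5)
The plan is to convert a labeling $B$ into a test vector $x$ and compare $\mathcal A_\alpha(H)$ with $\rho=\beta^{-1/k}$ using weighted AM--GM. For each incidence $v\in e$ write
\[
B(v,e)-\alpha/\rho=(1-\alpha)c_{v,e},\qquad c_{v,e}>0,
\]
and put $\beta_e=\prod_{v\in e}c_{v,e}$. Throughout I use two facts about $\mathcal A_\alpha(H)$. First, for connected $H$ it is a nonnegative symmetric weakly irreducible tensor. Second, Perron--Frobenius theory for such tensors applies: $\rho_\alpha(H)$ has a positive eigenvector, $\rho_\alpha(H)=\max\{\mathcal A_\alpha(H)x^k : x\geq0,\ \|x\|_k=1\}$, and the Collatz--Wielandt inequalities hold, with strict versions.

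\emph{Part (1).} I will show $\mathcal A_\alpha(H)x^k\leq\rho$ for every nonnegative $x$ with $\sum_v x_v^k=1$.
\begin{itemize}
\item Fix an edge $e$. By AM--GM applied to the numbers $c_{v,e}x_v^k$, and using $\beta_e\geq\beta$,
\[
x^e=\beta_e^{-1/k}\prod_{v\in e}(c_{v,e}x_v^k)^{1/k}\leq \frac{\rho}{k}\sum_{v\in e}c_{v,e}x_v^k .
\]
\item Hence
\[
(1-\alpha)k\,x^e\leq\rho\sum_{v\in e}\bigl(B(v,e)-\alpha/\rho\bigr)x_v^k .
\]
\item Adding $\alpha\sum_{v\in e}x_v^k$ to both sides, the edge term in \eqref{eq:rayleigh-poly} is at most $\rho\sum_{v\in e}B(v,e)x_v^k$.
\item Summing over edges and swapping the order of summation gives
\[
\mathcal A_\alpha(H)x^k\leq\rho\sum_v x_v^k\sum_{e\ni v}B(v,e)\leq\rho .
\]
\end{itemize}
For strictness, take $x$ to be the positive Perron vector, so that $\mathcal A_\alpha(H)x^k=\rho_\alpha(H)$. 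If $\rho_\alpha(H)=\rho$, every inequality above is tight. Since $x_v>0$ for all $v$, this forces $\sum_{e\ni v}B(v,e)=1$ for every vertex and $\beta_e=\beta$ for every edge, so the labeling is not strict.

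\emph{Parts (2) and (3).} Here I build a positive vector for which the AM--GM step is an equality, i.e.\ $c_{v,e}x_v^k$ is constant on each edge. Fix a root $r$ with $x_r=1$. Propagate along vertex--edge walks by the rule $x_w^k/x_v^k=c_{v,e}/c_{w,e}$ for $v,w\in e$. Going around a Berge cycle, the product of these ratios is exactly the inverse of the consistency product, so consistency makes $x$ well defined on connected $H$.

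Set $t_e=c_{v,e}x_v^k$, which is independent of $v\in e$. Then $x^e=t_e\beta_e^{-1/k}$, so for $v\in e$
\[
(1-\alpha)x^{e\setminus\{v\}}=(1-\alpha)\frac{x^e}{x_v}=(\beta/\beta_e)^{1/k}\,\rho\,\bigl(B(v,e)-\alpha/\rho\bigr)x_v^{k-1}.
\]
Summing over $e\ni v$ and adding $\alpha d_vx_v^{k-1}$ gives, in the normal case ($\beta_e=\beta$ and vertex sums equal to $1$),
\[
(\mathcal A_\alpha(H)x^{k-1})_v=\rho\,x_v^{k-1}\sum_{e\ni v}B(v,e)=\rho\,x_v^{k-1}.
\]
So $x$ is a positive eigenvector with eigenvalue $\rho$. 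Since a positive eigenvector of a weakly irreducible nonnegative tensor belongs to the spectral radius, this proves (3).

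In the supernormal case we have $\beta_e\leq\beta$ and $\sum_{e\ni v}B(v,e)\geq1$. Both give $(\mathcal A_\alpha(H)x^{k-1})_v\geq\rho\,x_v^{k-1}$ for every $v$, and strictness of the labeling makes at least one of these inequalities strict. For (2) this contributes a strict increase in one of two ways:
\begin{itemize}
\item a strict vertex inequality at $v$ makes the $v$-coordinate strict directly;
\item $\beta_e<\beta$ makes every coordinate $v\in e$ strict, because the term $(\beta/\beta_e)^{1/k}>1$ multiplies a positive quantity.
\end{itemize}

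\emph{Main obstacle.} The remaining step is to conclude $\rho_\alpha(H)>\rho$ from $\mathcal A_\alpha(H)x^{k-1}\geq\rho x^{[k-1]}$ with $x>0$ and at least one strict coordinate. This is the strict Collatz--Wielandt inequality for weakly irreducible nonnegative tensors. I would first try to cite it. If that fails, I would argue directly. Suppose $\rho_\alpha(H)=\rho$ and let $y>0$ be the Perron eigenvector. Choose the largest $s$ with $sy\leq x$, so that equality $sy_u=x_u$ holds at some $u$. Monotonicity and connectivity then force equality to propagate from $u$ along edges to every vertex. This contradicts the strict coordinate.
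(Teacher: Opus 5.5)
Your parts (1) and (3), and your reduction of (2) to the supersolution inequality $\mathcal A_\alpha(H)x^{k-1}\geq\rho x^{[k-1]}$ with one strict coordinate, are correct. They follow the standard labeling argument behind the cited result; the paper gives no proof of its own, only the reference to Shan--Wang--Wang, which adapts Lu--Man's method. That argument is edgewise AM--GM against a nonnegative unit vector for the upper bound. For the lower bound it uses the positive vector, built from a consistent labeling, with $c_{v,e}x_v^k$ constant on each edge. Citing the strict Collatz--Wielandt inequality for weakly irreducible nonnegative tensors is legitimate.

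The direct fallback you sketch for that step does not work as written. Take $s$ largest with $sy\le x$ and a contact vertex $u$ with $sy_u=x_u$. Monotonicity then only gives
\[
(\mathcal A_\alpha(H)x^{k-1})_u\geq(\mathcal A_\alpha(H)(sy)^{k-1})_u=\rho x_u^{k-1}.
\]
This is a lower bound in the same direction as your hypothesis, so no equality is forced and nothing propagates. That choice of $s$ suits subsolutions, not supersolutions. Your version also assumes $\rho_\alpha(H)=\rho$ and never excludes $\rho_\alpha(H)<\rho$.

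The repair is to take the smallest $s$ with $x\le sy$, put $\mu=\rho_\alpha(H)$, and let $u$ be a contact vertex. Then
\[
\mu x_u^{k-1}=(\mathcal A_\alpha(H)(sy)^{k-1})_u\geq(\mathcal A_\alpha(H)x^{k-1})_u\geq\rho x_u^{k-1},
\]
so $\mu\geq\rho$. If $\mu=\rho$, every term $(1-\alpha)\bigl((sy)^{e\setminus\{u\}}-x^{e\setminus\{u\}}\bigr)$ at $u$ vanishes, which forces $x_w=sy_w$ for all neighbours $w$. Connectivity then gives $x=sy$ and $\mathcal A_\alpha(H)x^{k-1}=\rho x^{[k-1]}$, contradicting the strict coordinate.

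You can in fact avoid Collatz--Wielandt theory entirely. $\mathcal A_\alpha(H)$ is symmetric, and in (1) you already use $\rho_\alpha(H)=\max\{\mathcal A_\alpha(H)x^k: x\geq0,\ \|x\|_k=1\}$. Your positive vector therefore gives
\[
\mathcal A_\alpha(H)x^k=\sum_v x_v\bigl(\mathcal A_\alpha(H)x^{k-1}\bigr)_v>\rho\sum_v x_v^k,
\]
so $\rho_\alpha(H)>\rho$ at once. The same identity with equality gives $\rho_\alpha(H)\geq\rho$ in (3). Part (1) gives the reverse inequality, since a normal labeling is subnormal. So you also do not need the theorem that a positive eigenvector of a weakly irreducible tensor belongs to the spectral radius.
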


\section{The smallest limit point of the
\texorpdfstring{\(\alpha\)}{alpha}-spectral radii of
\texorpdfstring{\(k\)}{k}-uniform hypergraphs}
We first prove that the $\alpha$-spectral radius of any nonlinear $k$-uniform hypergraph is at least $\lambda_k(\alpha)$.

\begin{lem}\label{sc:lem:cycle-subgraphs}
If a connected \(k\)-uniform hypergraph \(H\) contains a loose cycle
\(C_m^{k}\), \(m\geq3\), or \(C_2^{k}\) as a proper subhypergraph, then
\[
        \rho_\alpha(H)>\lambda_k(\alpha)\qquad(0<\alpha<1).
\]
\end{lem}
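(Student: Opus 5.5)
This is immediate from Proposition~\ref{subgraph} combined with Lemma~\ref{loose spec}. Suppose \(H\) is connected and contains \(C\) as a proper subhypergraph, where \(C\) is either \(C_m^{k}\) with \(m\geq3\) or \(C_2^{k}\). By Lemma~\ref{loose spec}, for every \(0<\alpha<1\) the \(\alpha\)-spectral radius of \(C\) equals the largest root of \eqref{eq:cycle}. That root is \(\lambda_k(\alpha)\) in both cases, independently of the length \(m\).

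The second part of Proposition~\ref{subgraph} applies because \(H\) is connected and \(C\) is a proper subhypergraph. It gives the strict inequality \(\rho_\alpha(H)>\rho_\alpha(C)=\lambda_k(\alpha)\), which is the claim.

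The only point to check is that "proper subhypergraph" in the hypothesis matches the sense used in Proposition~\ref{subgraph}. That is, \(C\) is obtained from \(H\) by deleting at least one edge or vertex. If a vertex of \(H\) outside \(C\) were isolated in \(H\), this could fail, but connectivity of \(H\) rules that out.

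If one wanted to avoid relying on the strict form of Proposition~\ref{subgraph}, the same conclusion follows from the Perron--Frobenius argument behind it, applied to \eqref{eq:rayleigh-poly}. Take the positive unit Perron vector \(x\) of \(C\) and extend it by zero to \(H\); this gives \(\rho_\alpha(H)\geq\lambda_k(\alpha)\). Equality would force this vector to be a Perron vector of the connected hypergraph \(H\), hence strictly positive, which contradicts the zero entries (or, if \(V(C)=V(H)\), the extra edge contributes a positive term). I expect no real obstacle: the lemma is a direct corollary of the two cited results.
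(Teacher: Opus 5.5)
Your proof is correct and matches the paper's argument. The paper also just combines Lemma~\ref{loose spec}, which gives \(\rho_\alpha(C)=\lambda_k(\alpha)\) for every loose cycle and for \(C_2^k\), with the strict part of Proposition~\ref{subgraph} for the connected hypergraph \(H\); your extra zero-extension argument is sound but not needed.
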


\begin{proof}
It immediately follows from Proposition \ref{subgraph} and Lemma \ref{loose spec}.
\end{proof}

\begin{lem}\label{sc:lem:large-intersection}
Let \(J\) be the hypergraph formed by two \(k\)-edges whose intersection has
size \(s\), where \(3\leq s\leq k-1\).  Then
\[
        \rho_\alpha(J)>\lambda_k(\alpha)\qquad(0<\alpha<1).
\]
Consequently every connected hypergraph containing two edges with an
intersection of size at least three satisfies the same strict inequality.
\end{lem}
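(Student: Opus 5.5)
We compare $J$ directly with the loose cycle equation \eqref{eq:cycle} by producing a consistently strictly supernormal labeling, or more simply by a Rayleigh-quotient/eigen-equation computation exploiting symmetry.

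\textbf{Computing $\rho_\alpha(J)$.} Split $V(J)$ into three classes: the $s$ common vertices (degree $2$), and the $k-s$ private vertices of each edge (degree $1$). By symmetry and Perron--Frobenius uniqueness, the Perron vector is constant on each class: $x_i=a$ on common vertices and $x_i=b$ on private vertices. The eigen-equations become
\[
(\rho-2\alpha)a^{k-1}=2(1-\alpha)a^{s-1}b^{k-s},\qquad (\rho-\alpha)b^{k-1}=(1-\alpha)a^{s}b^{k-s-1}.
\]
Setting $t=a/b$, these give $(\rho-2\alpha)t^{k-s}=2(1-\alpha)$ and $(\rho-\alpha)=(1-\alpha)t^{s}$. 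Eliminating $t$, $\rho=\rho_\alpha(J)$ is the largest root of
\[
g_s(x):=\Big(\tfrac{x-2\alpha}{2}\Big)^{s}(x-\alpha)^{k-s}-(1-\alpha)^k=0,
\]
with $x>2\alpha$. For $s=2$, $g_2$ is exactly \eqref{eq:cycle}, which is consistent with Lemma~\ref{loose spec} for $C_2^k$. Equivalently, one can produce an $(\alpha,\beta)$-normal labeling with $B=1/2$ on common incidences and $B=1$ on private ones; since $J$ has Berge cycles when $s\ge2$, consistency must be checked, and it holds by the symmetry.

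\textbf{Comparison.} Let $\lambda=\lambda_k(\alpha)>2\alpha$ from \eqref{2alpha}. Then $g_2(\lambda)=0$, so $(\tfrac{\lambda-2\alpha}{2})^2(\lambda-\alpha)^{k-2}=(1-\alpha)^k$, and
\[
g_s(\lambda)+(1-\alpha)^k=(1-\alpha)^k\Big(\frac{\lambda-2\alpha}{2(\lambda-\alpha)}\Big)^{s-2}.
\]
Since $\frac{\lambda-2\alpha}{2(\lambda-\alpha)}<\tfrac12<1$ for $\alpha>0$ and $s\ge3$, we get $g_s(\lambda)<0$. Because $g_s$ is increasing on $(2\alpha,\infty)$ and tends to $+\infty$, its largest root exceeds $\lambda$, so $\rho_\alpha(J)>\lambda_k(\alpha)$. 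The main care point is justifying that the symmetric reduction really yields the spectral radius and not a smaller eigenvalue. This follows because the positive solution $(a,b)$ gives a positive eigenvector, and a positive eigenvector of a weakly irreducible nonnegative tensor corresponds to $\rho$. Alternatively, Lemma~\ref{lem:label-comparison}(3) with the consistent normal labeling above gives the same conclusion.

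\textbf{Consequence.} If a connected $H$ has two edges meeting in at least three vertices, then either they meet in $s$ vertices with $3\le s\le k-1$, so $J\subseteq H$, or (when $s=k$) the edges coincide, which is impossible for distinct edges. Hence Proposition~\ref{subgraph} gives $\rho_\alpha(H)\ge\rho_\alpha(J)>\lambda_k(\alpha)$.
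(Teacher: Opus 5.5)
Your proposal is correct and follows essentially the same route as the paper: the symmetric Perron-vector reduction to $\bigl(\frac{\mu-2\alpha}{2}\bigr)^s(\mu-\alpha)^{k-s}=(1-\alpha)^k$, evaluation at $\lambda_k(\alpha)$ via \eqref{eq:cycle} to get the factor $\bigl(\frac{\lambda-2\alpha}{2(\lambda-\alpha)}\bigr)^{s-2}<1$, monotonicity on $(2\alpha,\infty)$, and Proposition~\ref{subgraph} for the consequence. Your justification that the Perron vector is constant on the two vertex classes, or alternatively the consistent normal labeling with weights $1/2$ and $1$, spells out a step the paper leaves implicit.
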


\begin{proof}
Let \(\mu=\rho_\alpha(J)\).  If the common vertices have Perron
coordinate \(a\) and the private vertices coordinate \(b\), then
\[
        (\mu-2\alpha)a^{k-1}=2(1-\alpha)a^{s-1}b^{k-s},
        \qquad
        (\mu-\alpha)b^{k-1}=(1-\alpha)a^sb^{k-s-1}.
\]
Eliminating \(a/b\) gives
\begin{equation}\label{sc:eq:large-intersection}
        \left(\frac{\mu-2\alpha}{2}\right)^s
        (\mu-\alpha)^{k-s}=(1-\alpha)^k.
\end{equation}
At \(\mu=\lambda_k(\alpha)\), the left-hand side
of~\eqref{sc:eq:large-intersection} is
\[
        (1-\alpha)^k
        \left(\frac{\lambda_k(\alpha)-2\alpha}
        {2(\lambda_k(\alpha)-\alpha)}\right)^{s-2},
\]
which is strictly smaller than \((1-\alpha)^k\).  Since the left-hand side
of~\eqref{sc:eq:large-intersection} is strictly increasing for
\(\mu>2\alpha\), its root is larger
than \(\lambda_k(\alpha)\). Proposition~\ref{subgraph} completes the proof.
\end{proof}

We show that $\lambda_k(\alpha)$ is the limit value of the $\alpha$-spectral radii of paths.
\begin{lem}\label{pnlimit}
Let \(P_n^k\) be a \(k\)-uniform loose path with \(n\) edges. Then, for every
\(k\geq3\),
\[
\lim_{n\to\infty}\rho_\alpha(P_n^k)=\lambda_k(\alpha).
\]
\end{lem}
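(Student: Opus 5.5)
Two facts give the limit: the sequence \(\rho_\alpha(P_n^k)\) is increasing and bounded above by \(\lambda_k(\alpha)\), and a matching lower bound comes from an explicit labeling. Since \(P_n^k\) is a proper subhypergraph of \(P_{n+1}^k\), and also of a longer loose cycle \(C_{m}^k\) (for instance \(m=n+2\)), Proposition~\ref{subgraph} yields
\[
\rho_\alpha(P_n^k)<\rho_\alpha(P_{n+1}^k)<\lambda_k(\alpha).
\]
Here Lemma~\ref{loose spec} supplies \(\rho_\alpha(C_m^k)=\lambda_k(\alpha)\) independently of \(m\). So the limit \(L\) exists and satisfies \(L\leq\lambda_k(\alpha)\). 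The work is to show that \(L\geq\lambda_k(\alpha)-\varepsilon\) for every \(\varepsilon>0\).

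For the lower bound I will use Lemma~\ref{lem:label-comparison}(2) on paths. Paths are hypertrees, so consistency is automatic. Fix \(\rho<\lambda_k(\alpha)\) close to \(\lambda_k(\alpha)\), and set \(\beta=\rho^{-k}\). I will build a strictly \((\alpha,\beta)\)-supernormal labeling of \(P_n^k\) for all large \(n\), which forces \(\rho_\alpha(P_n^k)>\rho\).

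Label the path edge by edge. In edge \(e_i\), each of the \(k-2\) leaf vertices receives label \(1\); this makes the vertex sum equal to \(1\), and \(1>\alpha/\rho\) holds because \(\rho>1>\alpha\) once \(\rho\) is near \(\lambda_k(\alpha)\geq k/(k-1)\). The two degree-two vertices of \(e_i\) receive \(x_i=B(v_{i-1},e_i)\) and \(y_i=B(v_i,e_i)\). The vertex condition at \(v_i\) is \(y_i+x_{i+1}\geq1\), and I will take it with equality. The edge condition is
\[
(x_i-\alpha/\rho)(y_i-\alpha/\rho)\Big(\tfrac{1-\alpha/\rho}{1-\alpha}\Big)^{k-2}\leq\beta(1-\alpha)^2 .
\]
I will take it with equality as well, which gives a one-dimensional recursion \(x_{i+1}=1-y_i=f(x_i)\), where \(f\) is a Möbius-type map.

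The fixed-point equation of \(f\) with \(x=y=1/2\) is exactly \eqref{eq:cycle} at \(\rho\). So at \(\rho=\lambda_k(\alpha)\) the map \(f\) has \(1/2\) as a double (parabolic) fixed point, and for \(\rho<\lambda_k(\alpha)\) it has no real fixed point in the admissible range. Starting from \(x_1\) close to \(1\) (end edge), the orbit of \(f\) therefore drifts steadily downward. At \(\rho\) near the parabolic value, however, it takes many steps, a number tending to \(\infty\) as \(\rho\to\lambda_k(\alpha)^-\), before leaving the range \((\alpha/\rho,1)\).

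For the supernormal labeling I need the direction of each inequality reversed, and I need to close the path at the far end. The last vertex \(v_n\) is a leaf, so its vertex sum is \(y_n\), and supernormality requires \(y_n\geq1\). Run the recursion for \(n\) steps and then set the final label \(y_n=1\). Equivalently, run the recursion symmetrically from both ends and meet in the middle: start from \(x_1=1\) at one end and \(y_n=1\) at the other, and iterate both orbits until they meet. While the orbits are still admissible, the meeting point has slack in the required direction. I will arrange the construction so that at the middle vertex the labels from the two sides sum to more than \(1\), giving the strict supernormal inequality there and equality everywhere else.

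\textbf{Main obstacle.} The hard step is making the monotonicity of the parabolic recursion precise. I need to show that, for \(\rho<\lambda_k(\alpha)\) sufficiently close, the forward orbit from \(1\) stays inside \((\alpha/\rho,1)\) for at least \(N(\rho)\to\infty\) steps, and that stopping near the middle yields the correct inequality direction. I would handle this by comparison with the case \(\rho=\lambda_k(\alpha)\). There the orbit from \(1\) converges monotonically to \(1/2\) and never crosses it, and continuity in \(\rho\) over finitely many steps then gives admissibility for \(n\leq N(\rho)\).

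This yields \(\rho_\alpha(P_n^k)>\rho\) for a fixed \(n\), and hence for all larger \(n\) by the monotonicity established at the start. Combined with the upper bound, \(L=\lambda_k(\alpha)\).
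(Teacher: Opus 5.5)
Your upper bound and monotonicity argument are fine, and bounding \(\rho_\alpha(P_n^k)\) from below by a strictly supernormal labeling of the hypertree \(P_n^k\) is a legitimate alternative to the paper's route. The gap is in the decisive step, the sign at the vertex where the two half-orbits meet, and the justification you propose points the wrong way.

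Write \(c=\alpha/\rho\), \(u_i=x_i-c\), and \(K=\beta(1-\alpha)^k/(1-c)^{k-2}\). Your recursion is then \(y_i-c=K/u_i\), \(u_{i+1}=(1-2c)-K/u_i\), with \(u_1=1-c\). For \(n=2j\) and the mirrored labeling, every condition is normal except at \(v_j\), whose vertex sum is \(2c+2K/u_j\).

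At \(\rho=\lambda_k(\alpha)\) one has \(K=(\tfrac12-c)^2\), and the orbit stays strictly above the parabolic fixed point \(\tfrac12-c\). Hence this sum is strictly less than \(1\) for every \(j\), as it must be, since \(\rho_\alpha(P_n^k)<\lambda_k(\alpha)\). By continuity the same holds for every fixed \(n\) once \(\rho\) is close enough to \(\lambda_k(\alpha)\). So in exactly the regime your comparison with the parabolic case controls, \(n\le N(\rho)\), the meeting vertex has slack in the \emph{subnormal} direction. The assertion that ``while the orbits are still admissible, the meeting point has slack in the required direction'' is false, and nothing in the proposal produces the needed inequality. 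Also, running the recursion and then resetting \(y_n=1\) is not equivalent to meeting in the middle: it breaks the edge inequality at \(e_n\) unless the recursion already gave \(y_n\ge1\).

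The missing idea is that the path must be closed late, near the point where the orbit leaves the admissible range. This is possible precisely because \(\rho<\lambda_k(\alpha)\) destroys the fixed point. Take \(\max\{1,2\alpha\}<\rho<\lambda_k(\alpha)\). Since the left side of \eqref{eq:cycle} increases for \(x>2\alpha\), one gets \(\delta:=K-(\tfrac12-c)^2>0\). Therefore \(u_{i+1}-u_i=-(u_i^2-(1-2c)u_i+K)/u_i\le-\delta/(1-c)\) while \(0<u_i\le1-c\). Hence there is a first index \(m\ge2\) with \(u_m\le0\), that is, \(K/u_{m-1}\ge1-2c\).

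Now take \(n=2(m-1)\) and \(j=m-1\) in the mirrored labeling. All incidences exceed \(c\), all edges and all vertices other than \(v_j\) are normal, and \(v_j\) has sum \(2c+2K/u_{m-1}\ge2-2c>1\). Lemma~\ref{lem:label-comparison}(2) gives \(\rho_\alpha(P_{2m-2}^k)>\rho\), and monotonicity covers larger \(n\). Only one \(n\) per \(\rho\) is needed, so the estimate \(N(\rho)\to\infty\) that you singled out as the main obstacle plays no role.

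Even repaired, this is much heavier than the paper's proof. The paper restricts the cycle's Perron vector (value \(a\) at the \(v_i\), \(b\) at leaves) to \(P_n^k\) and reads off \(\rho_\alpha(P_n^k)\ge\lambda_k(\alpha)\,nD/(nD+a^k)\) from the Rayleigh quotient. That gives an explicit \(O(1/n)\) rate in a few lines.
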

\begin{proof}
Since \(P_n^k\) is a proper subhypergraph of \(C_{n+1}^k\),
Proposition~\ref{subgraph} gives
\[
\rho_\alpha(P_n^k)<\lambda_k(\alpha).
\]
    
By the symmetry of loose cycles and the uniqueness of Perron vectors, its positive Perron vector takes the same value \(a>0\) at all vertices of degree $2$ and the same value \(b>0\) at all vertices of degree $1$. Define \[
D:=a^k+(k-2)b^k,
\]
\[
N:=
\alpha\bigl(2a^k+(k-2)b^k\bigr)
+
(1-\alpha)k a^2b^{k-2}.
\]
Substituting this Perron vector into the Rayleigh quotient for \(C^{k}\)
gives
\begin{equation}
    \lambda_k(\alpha)=\frac ND.\label{ND1}
\end{equation}

Write the edges of \(P_n^k\) as
\(e_i=\{v_{i-1},v_i\}\cup L_i\), \(1\leq i\leq n\), where
\(|L_i|=k-2\) and the sets \(L_i\) are pairwise disjoint. Define \(y\) by
\[
y_{v_i}=a\quad(0\leq i\leq n),\qquad
y_w=b\quad(w\in L_i).
\]
Then
\begin{equation}
    \sum_{v\in V(P_n^{k})}y_v^k=(n+1)a^k+n(k-2)b^k=nD+a^k.\label{ND2}
\end{equation}

The coordinate product over each edge is \(a^{2}b^{k-2}\), and hence
\begin{equation}
    \mathcal{A}_{\alpha}\left(P_{n}^{k}\right)y^{k}=nN.\label{ND3}
\end{equation}
Combining \eqref{ND1}, \eqref{ND2}, and \eqref{ND3}, we obtain
\[\rho_{\alpha}\left(P_{n}^{k}\right) \geq \frac{n N}{n D+a^{k}}=\lambda_{k}(\alpha) \cdot \frac{n D}{n D+a^{k}}. \]
Thus
\[
\lambda_{k}(\alpha)\frac{nD}{nD+a^{k}}
\leq \rho_{\alpha}\left(P_{n}^{k}\right)<\lambda_{k}(\alpha).
\]
Letting \(n\to\infty\) proves the result. Furthermore, from Proposition \ref{subgraph}, and 
$P_n^k\subsetneq P_{n+1}^k,$ we have 
$\rho_\alpha(P_n^k)<\rho_\alpha(P_{n+1}^k).
$
\end{proof}
\begin{lem}[{\cite[Theorem~5.1(1)]{ShanWangWang}}]\label{treelimit}
For \(k\geq3\) and \(n\geq4\), \(P_n^k\) is the unique hypergraph with
minimum \(\alpha\)-spectral radius among connected \(k\)-uniform hypergraphs
with \(n\) edges.
\end{lem}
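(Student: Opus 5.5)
The plan is to split the connected \(k\)-uniform hypergraphs \(H\) with \(n\) edges into non-hypertrees and hypertrees. The non-hypertrees are handled by the loose-cycle threshold \(\lambda_k(\alpha)\). The hypertrees are handled by an edge-count-preserving transformation that strictly decreases \(\rho_\alpha\) and terminates at \(P_n^k\). Throughout I write \(\rho_\alpha(P_n^k)<\lambda_k(\alpha)\), which comes from the proof of Lemma~\ref{pnlimit}.

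\textbf{Non-hypertrees.} Suppose \(H\) is not a hypertree.

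If \(H\) is non-linear, two edges meet in \(s\geq2\) vertices. For \(s=2\), \(H\) contains \(C_2^k\), and Proposition~\ref{subgraph} with Lemma~\ref{loose spec} gives \(\rho_\alpha(H)\geq\lambda_k(\alpha)\). For \(s\geq3\), Lemma~\ref{sc:lem:large-intersection} gives \(\rho_\alpha(H)>\lambda_k(\alpha)\).

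If \(H\) is linear but \(\mathcal I(H)\) has a cycle, take a shortest Berge cycle \(v_0e_1v_1\cdots e_mv_0\), which has \(m\geq3\) by linearity. I check that its edges form a loose cycle \(C_m^k\):
\begin{itemize}
\item Two non-consecutive \(e_i,e_j\) cannot meet, since a common vertex would give a shorter Berge cycle.
\item No \(e_i\) contains a third cycle vertex \(v_j\) beyond \(v_{i-1},v_i\), since that would also give a shortcut.
\end{itemize}
Hence \(H\supseteq C_m^k\) and \(\rho_\alpha(H)\geq\lambda_k(\alpha)\).

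In every non-hypertree case this gives \(\rho_\alpha(H)\geq\lambda_k(\alpha)>\rho_\alpha(P_n^k)\).

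\textbf{Hypertrees.} Now let \(H\) be a hypertree with \(n\) edges, \(H\neq P_n^k\). Then \(H\) has a branching vertex or a branching edge.

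I will prove a grafting lemma for \(\mathcal A_\alpha\). Suppose a hypertree has a vertex \(u\), either a branching vertex or a non-leaf vertex of a branching edge, with two pendent loose paths of lengths \(p\geq q\geq1\) hanging at \(u\). Replace them by a single pendent path of length \(p+q\) at \(u\). Then \(\rho_\alpha\) strictly decreases, while the number of edges is unchanged.

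The proof would use the Perron vector \(x\) of the new hypertree \(H'\) and a vertex relabeling that turns \(H'\) back into \(H\). I would write \(\mathcal A_\alpha(H)y^k-\mathcal A_\alpha(H')x^k\) edge by edge using~\eqref{eq:rayleigh-poly}. The diagonal \(\alpha\) terms contribute only through degree changes at the two swapped vertices. The adjacency terms compare products along the two arms.

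Alternatively, and probably more cleanly in this paper's language, I would use Lemma~\ref{lem:label-comparison}. Fix \(\beta=\rho_\alpha(H')^{-k}\) and take the consistent \((\alpha,\beta)\)-normal labeling of \(H'\). Along a pendent path, this labeling is determined recursively from the leaf end, so the labels \(B(v,e)\) at the root form an explicit monotone sequence in the path length. I would transplant these labels to \(H\). The vertex condition at \(u\), or at the attachment vertex, then becomes an inequality between two such sequence values. That inequality should make the labeling of \(H\) strictly supernormal, and consistency is automatic on hypertrees. Lemma~\ref{lem:label-comparison}(2) then gives \(\rho_\alpha(H)>\rho_\alpha(H')\).

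\textbf{Iteration.} Repeating the move reduces the number of pendent paths at branching points. A hypertree with no branching vertex and no branching edge is a path, so the process terminates at \(P_n^k\) with strict decrease at each step. This yields uniqueness of the minimizer. The hypothesis \(n\geq4\) should enter because for very small \(n\) the non-hypertree bound \(\lambda_k(\alpha)>\rho_\alpha(P_n^k)\) must be compared with configurations such as \(C_2^k\) or the star; I would check \(n\leq3\) separately only to see where it fails.

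\textbf{Main obstacle.} The hard step is the grafting inequality for \(0<\alpha<1\), because the diagonal term \(\alpha\mathcal D\) breaks the pure-product comparison used for \(\alpha=0\). My first attempt will be the labeling route, reducing the claim to monotonicity of the one-dimensional recursion \(B\mapsto\) next label along a pendent path. That recursion comes from \(\prod_{v\in e}(B(v,e)-\alpha/\rho)/(1-\alpha)=\beta\) with leaf labels equal to \(1\). I expect this monotonicity to follow from the map being increasing and contracting toward its fixed point, whose value corresponds to \(\lambda_k(\alpha)\).
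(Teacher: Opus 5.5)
The paper does not prove Lemma~\ref{treelimit}; it imports it from \cite{ShanWangWang}, so your argument has to stand on its own. Your non-hypertree half does. A shortest Berge cycle in a linear $H$ spans a loose cycle $C_m^k$. Together with $C_2^k$ and Lemma~\ref{sc:lem:large-intersection}, this gives $\rho_\alpha(H)\geq\lambda_k(\alpha)>\rho_\alpha(P_n^k)$.

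\textbf{Gap 1: the iteration does not reach $P_n^k$.} Your only move merges two pendent paths hanging at a common vertex $u$, and a non-path hypertree need not contain such a vertex. Take $F_{a,b,c}^k$ (or $H_{a,b,c,d}^k$, $G_{a,b:q:c,d}^k$, or the auxiliary hypertree $M$). There all branching happens at branching edges, and every non-leaf vertex carries at most one pendent path. So the move never applies, and the process can stop at $F_{a,b,c}^k\neq P_n^k$. Your alternative ``non-leaf vertex of a branching edge'' does not help: a vertex carrying two pendent paths and lying in a further edge already has degree at least three. You need an edge-type operation that removes branching edges, together with a proof that it strictly lowers $\rho_\alpha$. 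One candidate is to detach the pendent path at one non-leaf vertex of a branching edge and re-attach it at the free end of the pendent path at another non-leaf vertex of that edge. This turns $F_{a,b,c}^k$ into $P_{a+b+c+1}^k$.

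\textbf{Gap 2: the vertex move itself is not proved.} The mechanism you expect fails exactly where it is needed. Transplant the normal labeling of $H'$ at $\beta=\rho_\alpha(H')^{-k}$. Then the only non-normal condition is at $u$, and it reads $b_p+b_q>b_{p+q}$. Here $b_0=0$, $b_{\ell+1}=y+D/(1-y-b_\ell)$, $y=\alpha/\rho_\alpha(H')$ and $D=\beta(1-\alpha)^k/(1-y)^{k-2}$.

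This subadditivity does not follow from the recursion being increasing. At $\alpha=0$, $\beta=1/3$ one gets $b_1,\dots,b_4=\tfrac13,\tfrac12,\tfrac23,1$, so $b_2+b_2=b_1+b_3=b_4$. This is the degenerate case $H\cong H'\cong P_4^k$, with nothing else attached at $u$.

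The fixed-point picture also fails. The map $b\mapsto y+D/(1-y-b)$ has a fixed point only if $D\leq(\tfrac12-y)^2$, which is equivalent to $\rho_\alpha(H')\geq\lambda_k(\alpha)$. At the final step $H'=P_n^k$, and $\rho_\alpha(P_n^k)<\lambda_k(\alpha)$, so there is no fixed point to contract towards.

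Strictness has to come from the positive weight that the rest of $H'$ carries at $u$, namely $1-b_{p+q}>y$. Turning that into $b_p+b_q>b_{p+q}$ is the real content of the grafting lemma, and you have not supplied it.
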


Now we are ready to present the proof of Theorem \ref{thmlimit}.
\begin{proof}[\textit{Proof of Theorem~\ref{thmlimit}.}]
Let \(\alpha\in(0,1)\), and let \(H_1,H_2,\ldots\) be a sequence of connected
\(k\)-uniform hypergraphs such that
\(\rho_\alpha(H_i)\ne\rho_\alpha(H_j)\) for \(i\ne j\) and
\(\rho_\alpha(H_n)\to\lambda<\lambda_k(\alpha)\). For every fixed \(M\),
there are only finitely many \(k\)-uniform hypergraphs of order at most \(M\),
up to isomorphism. Because the \(H_i\) have pairwise distinct spectral radii,
it follows that \(|V(H_n)|\to\infty\). Moreover,
\(|V(H_n)|\leq k|E(H_n)|\), and hence \(|E(H_n)|\to\infty\).
By Lemma~\ref{treelimit},
\[
\rho_\alpha(H_n)\geq
\rho_\alpha\!\left(P^k_{|E(H_n)|}\right).
\]
Lemma~\ref{pnlimit} now contradicts
\(\rho_\alpha(H_n)\to\lambda<\lambda_k(\alpha)\). Therefore,
\(\lambda_k(\alpha)\) is the smallest limit point of the
\(\alpha\)-spectral radii of connected \(k\)-uniform hypergraphs.
\end{proof}
\section{Uniform hypergraphs whose adjacency spectral radius does not exceed that of a loose cycle}
\subsection{Hypergraphs whose
\texorpdfstring{\(\alpha\)}{alpha}-spectral radius never exceeds that of a loose cycle}
Every ordinary cycle has $\alpha$-spectral radius \(2\), independently of
its length, because it is \(2\)-regular. We next prove that the only connected
\(k\)-uniform hypergraphs whose \(\alpha\)-spectral radius never exceeds that
of a loose cycle are \(k\)-uniform loose paths, the \(k\)-uniform loose
cycles themselves, and \(C_2^k\).

For any edge $e\in E(H)$, define
\[
s_H(e) := \bigl|\{v\in e \mid d_H(v)=2\}\bigr|.
\]

\begin{thm}\label{neverexceed}
Let $k\ge 3$, and let $H$ be a connected $k$-uniform hypergraph. Then
\[
\rho_\alpha(H) \le \lambda_k(\alpha) \quad \forall \alpha\in[0,1)
\]
if and only if
\begin{equation}
\Delta(H)\le 2,\quad s_H(e)\le 2 \quad \forall e\in E(H). \label{never1}
\end{equation}
Moreover, under condition~\eqref{never1}, for each $0\le \alpha<1$,
\begin{itemize}
    \item If every edge satisfies $s_H(e)=2$, then $\rho_\alpha(H)=\lambda_k(\alpha)$,
    \item If there exists at least one edge with $s_H(e)<2$, then $\rho_\alpha(H)<\lambda_k(\alpha)$.
\end{itemize}
\end{thm}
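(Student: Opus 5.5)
The plan is to prove sufficiency with the \((\alpha,\beta)\)-labeling method of Lemma~\ref{lem:label-comparison}, and necessity by showing that a violation of \eqref{never1} forces \(\rho_\alpha(H)>\lambda_k(\alpha)\) for \(\alpha\) close to \(1\). Throughout, put \(\rho=\lambda_k(\alpha)\) and \(\beta=\rho^{-k}\); by \eqref{eq:beta-range} we have \(0<\beta<1\).

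\emph{Structure and sufficiency.} First I classify the hypergraphs satisfying \eqref{never1}. Form the auxiliary multigraph whose vertices are the edges of \(H\), with one graph edge for each degree-two vertex of \(H\) joining the two hyperedges that contain it. By \eqref{never1} this multigraph is connected with maximum degree at most \(2\), so it is a path or a cycle (a double edge being allowed). Hence \(H\) is a single edge, a loose path \(P_m^k\), a loose cycle \(C_m^k\) with \(m\ge3\), or \(C_2^k\). Moreover, \(s_H(e)=2\) for every edge exactly in the cycle cases.

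On every incidence I use the same labeling:
\[
B(v,e)=\tfrac12 \text{ if } d_H(v)=2,\qquad B(v,e)=1 \text{ if } d_H(v)=1 .
\]
The requirement \(B>\alpha/\rho\) holds because \(\rho>2\alpha\) by \eqref{2alpha}. Every vertex sum equals \(1\). For an edge with \(s(e)=2\), the product condition \(\bigl(\frac{1/2-\alpha/\rho}{1-\alpha}\bigr)^2\bigl(\frac{1-\alpha/\rho}{1-\alpha}\bigr)^{k-2}=\rho^{-k}\) is exactly equation~\eqref{eq:cycle} after multiplying through by \(\rho^{-k}\).

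In the cycle case all degree-two labels are equal, so every ratio in the consistency product is \(1\). The labeling is therefore consistent and normal, and Lemma~\ref{lem:label-comparison}(3) gives equality.

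In the path case, every edge with \(s(e)<2\) replaces a factor \(\tfrac12-\alpha/\rho\) by the larger factor \(1-\alpha/\rho\). Its edge product then strictly exceeds \(\beta\), so the labeling is strictly subnormal and Lemma~\ref{lem:label-comparison}(1) gives strict inequality. This also settles both bullet points of the statement.

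\emph{Necessity.} Suppose \eqref{never1} fails. Lemma~\ref{loose spec} implies \(\lambda_k(\alpha)\to2\), and more precisely \(\lambda_k(\alpha)=2\alpha+(2+o(1))(1-\alpha)^{k/2}\) as \(\alpha\to1^-\): since \(x-\alpha\to1\), equation~\eqref{eq:cycle} gives \((x/2-\alpha)^2\sim(1-\alpha)^k\).

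If \(\Delta(H)\ge3\), I test the Rayleigh quotient on the unit vector at a vertex of maximum degree. This gives \(\rho_\alpha(H)\ge3\alpha\), which exceeds \(\lambda_k(\alpha)\) for \(\alpha\) near \(1\).

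Otherwise \(\Delta(H)\le2\) and some edge \(e\) contains three degree-two vertices \(u_1,u_2,u_3\). Take the test vector \(x\) with \(x=1\) on \(u_1,u_2,u_3\), \(x=\varepsilon\) on the other \(k-3\) vertices of \(e\), and \(x=0\) elsewhere. Using \eqref{eq:rayleigh-poly}, the quotient is at least
\[
\frac{6\alpha+\alpha\sum d_i\varepsilon^k+(1-\alpha)k\varepsilon^{k-3}}{3+(k-3)\varepsilon^k}
\;\ge\; 2\alpha+c_1(1-\alpha)\varepsilon^{k-3}-c_2\varepsilon^k
\]
for positive constants \(c_1,c_2\) depending only on \(k\).

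Now choose \(\varepsilon=(1-\alpha)^{1/2}\). The gain is then of order \((1-\alpha)^{(k-1)/2}\), while both the loss \(c_2\varepsilon^k\) and the excess \(\lambda_k(\alpha)-2\alpha\) are of order \((1-\alpha)^{k/2}\). Since \((k-1)/2<k/2\), the gain dominates, so \(\rho_\alpha(H)>\lambda_k(\alpha)\) for \(\alpha\) sufficiently close to \(1\). This contradicts the hypothesis, so \eqref{never1} must hold.

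The main obstacle is this last asymptotic comparison. It requires the precise rate \(\lambda_k(\alpha)-2\alpha\asymp(1-\alpha)^{k/2}\) extracted from \eqref{eq:cycle}, together with a choice of the exponent of \(\varepsilon\) that beats both error terms. An alternative, if this route proves delicate, is a strictly supernormal consistent labeling on the subhypergraph formed by \(e\) and its neighbouring edges, for \(\alpha\) near \(1\).
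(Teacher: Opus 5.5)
Your proposal is correct and follows essentially the same route as the paper: the labeling $B(v,e)=1/d_H(v)$ with Lemma~\ref{lem:label-comparison} gives sufficiency and both bullet points, and for necessity a Rayleigh-quotient test vector as $\alpha\to1^-$ is compared against $\lambda_k(\alpha)-2\alpha=O((1-\alpha)^{k/2})$. The differences are cosmetic: the paper skips your structural classification (it only uses that vertices on a Berge cycle have degree two), gets $\Delta(H)\le2$ from $\rho_\alpha(H)\to\Delta(H)$ rather than a unit vector, and puts value $1$ on all $r=s_H(e)$ degree-two vertices with leaf values $(1-\alpha)^{1/r}$ (treating $r=k$ separately) instead of your three vertices with $\varepsilon=(1-\alpha)^{1/2}$.
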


\begin{proof}
Assume first that~\eqref{never1} holds. Fix \(0\leq\alpha<1\), and put
\[
 \rho=\lambda_k(\alpha),\qquad \beta=\rho^{-k},\qquad
 a=\frac{\alpha}{\rho}.
\]
By~\eqref{2alpha}, \(0\leq a<1/2\), and~\eqref{eq:cycle} gives
\[
 \left(\frac12-a\right)^2(1-a)^{k-2}=(1-\alpha)^k\beta.
\]
Set \(B(v,e)=1/d_H(v)\) for \(v\in e\). Then
\(B(v,e)\geq1/2>a\) and \(\sum_{e\ni v}B(v,e)=1\) for every \(v\).
If \(r=s_H(e)\leq2\), then
\[
 \prod_{v\in e}\frac{B(v,e)-\alpha/\rho}{1-\alpha}
 =\frac{(\frac12-a)^r(1-a)^{k-r}}{(1-\alpha)^k}
 \geq\beta,
\]
with equality if and only if \(r=2\). Lemma~\ref{lem:label-comparison}
therefore gives \(\rho_\alpha(H)\leq\lambda_k(\alpha)\), with strict
inequality if some edge satisfies \(s_H(e)<2\). If \(s_H(e)=2\) for every
edge, then \(B\) is normal. Moreover, every vertex on a Berge cycle has degree two,
so the two incidence values on each edge of the cycle are both \(1/2\).
Hence \(B\) is consistent, and Lemma~\ref{lem:label-comparison} gives
\(\rho_\alpha(H)=\lambda_k(\alpha)\).

Conversely, suppose that
\[
 \rho_\alpha(H)\leq\lambda_k(\alpha)\qquad(0\leq\alpha<1).
\]
Since \(\mathcal A_\alpha(H)\to\mathcal D(H)\) as \(\alpha\to1^-\),
continuity and~\eqref{2alpha} give
\[
 \Delta(H)=\lim_{\alpha\to1^-}\rho_\alpha(H)\leq2.
\]
Suppose that an edge \(e\) contains \(r=s_H(e)\geq3\) vertices of degree
two. Write
\[
 e=\{u_1,\ldots,u_r,w_1,\ldots,w_{k-r}\},\qquad
 d_H(u_i)=2,\quad d_H(w_j)=1.
\]
Let \(\alpha=1-\varepsilon\). From~\eqref{eq:cycle} and
\(\lambda_k(\alpha)-\alpha>\alpha\), there is a constant \(M_k>0\) such
that, for all sufficiently small \(\varepsilon>0\),
\[
 \lambda_k(\alpha)-2\alpha
 =2\varepsilon^{k/2}\bigl(\lambda_k(\alpha)-\alpha\bigr)^{-(k-2)/2}
 \leq M_k\varepsilon^{k/2}.
\]
If \(r<k\), set \(t=\varepsilon^{1/r}\), take \(x_{u_i}=1\),
\(x_{w_j}=t\), and set all other coordinates equal to zero. Every edge
other than \(e\) then has zero adjacency-product term. The Rayleigh quotient
gives
\[
\begin{aligned}
 \rho_\alpha(H)-2\alpha
 &\geq
 \frac{\varepsilon k t^{k-r}-\alpha(k-r)t^k}
      {r+(k-r)t^k}\\
 &=\frac{\bigl(k-\alpha(k-r)\bigr)\varepsilon^{k/r}}
        {r+(k-r)\varepsilon^{k/r}}
 \geq\frac{r}{k}\varepsilon^{k/r}.
\end{aligned}
\]
Since \(r\geq3\), we have \(k/r<k/2\), a contradiction for sufficiently
small \(\varepsilon\). If \(r=k\), the characteristic vector of \(e\)
instead gives
\[
 \rho_\alpha(H)\geq2\alpha+\varepsilon,
\]
which is again a contradiction for sufficiently small \(\varepsilon\),
since \(k\geq3\). Therefore \(s_H(e)\leq2\) for every edge \(e\), as
required.
\end{proof}

From Theorem \ref{neverexceed} we can characterize all connected $k$-uniform hypergraphs whose $\alpha$-spectral radius never exceeds that of a $k$-uniform loose cycle.

\begin{thm}\label{neverexceedmain}
Let $k\ge 3$, and let $H$ be a connected $k$-uniform hypergraph. Then
\[
\rho_\alpha(H) \le \lambda_k(\alpha) \quad \forall \alpha\in[0,1)
\]
if and only if $H$ is a $k$-uniform loose path, a $k$-uniform loose cycle, or $C_2^k$.
\end{thm}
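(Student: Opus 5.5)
This is a direct corollary of Theorem~\ref{neverexceed}. By that theorem, it suffices to show that a connected \(k\)-uniform hypergraph \(H\) satisfies condition~\eqref{never1}, namely \(\Delta(H)\leq2\) and \(s_H(e)\leq2\) for every edge, exactly when \(H\) is a loose path, a loose cycle, or \(C_2^k\). So the plan reduces the spectral statement to a purely combinatorial characterization.

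\emph{Sufficiency.} This is a direct check. In \(P_m^k\) and \(C_m^k\) every vertex has degree at most two, and every edge contains at most two vertices of degree two (the two ``joint'' vertices). In \(C_2^k\) the two shared vertices have degree two and every other vertex is a leaf, so each edge has \(s_H(e)=2\). This also covers \(P_1^k\), where \(s=0\).

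\emph{Necessity.} Assume \(\Delta(H)\leq2\) and \(s_H(e)\leq2\) for all \(e\).
\begin{enumerate}[(i)]
\item Consider the auxiliary multigraph \(G\) whose vertices are the edges of \(H\), with one graph edge for each degree-two vertex of \(H\), joining the two hyperedges that contain it. Since \(\Delta(H)\leq2\), every vertex of \(H\) lies in at most two hyperedges, so this is well defined.
\item Connectivity of \(H\) (in the vertex--edge walk sense) implies that \(G\) is connected.
\item The degree of a hyperedge \(e\) in \(G\) is \(s_H(e)\leq2\). Hence \(G\) is a connected multigraph of maximum degree at most two, so it is a path, a cycle, or a digon (two vertices joined by two parallel edges). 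Loops cannot occur, since a vertex of \(H\) belongs to two \emph{distinct} hyperedges.
\item If \(G\) is a path with \(m\) vertices, then consecutive hyperedges share exactly one vertex, nonconsecutive ones are disjoint, and all other vertices are leaves. This is precisely \(P_m^k\), including \(m=1\).
\item If \(G\) is a cycle of length \(m\geq3\), we get \(C_m^k\) in the same way.
\item If \(G\) is a digon, there are two hyperedges sharing exactly two vertices, which is \(C_2^k\).
\end{enumerate}

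The only point needing care is step~(iii), specifically the observation that two hyperedges share exactly as many vertices as there are parallel \(G\)-edges between them. This is what forces the intersections to be single vertices (or two vertices in the digon case). I expect no real obstacle here; it is bookkeeping, and the spectral content is entirely supplied by Theorem~\ref{neverexceed}.
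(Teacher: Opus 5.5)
Your proposal is correct, but your route differs from the paper's. The paper splits on linearity.
\begin{enumerate}[(1)]
\item If $H$ is nonlinear, it observes that two edges must meet in exactly two vertices. It then excludes any further edge spectrally, using Proposition~\ref{subgraph} together with $\rho_\alpha(C_2^k)=\lambda_k(\alpha)$.
\item If $H$ is linear, it deletes $k-2$ degree-one vertices from every edge. This gives a simple graph of maximum degree at most $2$, hence a path or a cycle, and $H$ is its $k$-th power.
\end{enumerate}

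You instead pass to the dual multigraph on $E(H)$, whose edges are the degree-two vertices of $H$. This handles linear and nonlinear $H$ in one sweep: parallel edges record intersection sizes, and the digon is exactly $C_2^k$. Once Theorem~\ref{neverexceed} is invoked, your necessity argument is purely combinatorial; the spectral appeal in the paper's nonlinear case is in fact unnecessary, since the two shared vertices already saturate both $\Delta(H)\leq 2$ and $s_H(e)\leq 2$. You also state the sufficiency direction explicitly, which the paper leaves implicit.

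The paper's shadow-graph reduction has one advantage: it exhibits $H$ directly as a power hypergraph, matching the definitions of $P_m^k$ and $C_m^k$. However, it needs linearity for the shadow graph to be simple, which is why the nonlinear case has to be treated separately.
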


\begin{proof}
If $H$ is not linear, there exist two edges $e_1,e_2\in E(H)$ with $|e_1\cap e_2|\ge 2$. By Theorem \ref{neverexceed}, $|e_1\cap e_2|\le 2$, so $|e_1\cap e_2|=2$. If $H$ contains any other edge, then $C_2^k$ is a proper subhypergraph of $H$. We know
\[
\rho_\alpha(C_2^k) = \lambda_k(\alpha) \quad \forall \alpha\in[0,1).
\]
By Proposition \ref{subgraph},
\[
\rho_\alpha(H) > \rho_\alpha(C_2^k),
\]
a contradiction. Hence $H\cong C_2^k$.

Now assume that \(H\) is linear. Since \(s_H(e)\leq2\) for every edge
\(e\), each edge contains at least \(k-2\) vertices of degree \(1\).
Delete \(k-2\) degree-one vertices from each edge of \(H\) to obtain a simple
graph \(G\). Then \(H\) can be recovered by adding \(k-2\) degree-one
vertices to each edge of \(G\). The construction preserves connectedness,
and Theorem~\ref{neverexceed} gives
\(\Delta(G)\leq\Delta(H)\leq2\). Thus \(G\) is a connected simple graph of
maximum degree at most \(2\), so \(G\) is a path or a cycle.
Consequently, \(H\) is a \(k\)-uniform loose path or a \(k\)-uniform loose
cycle.
\end{proof}
For a \(k\)-uniform loose path \(P_n^k\), there is an edge
\(e\in E(P_n^k)\) with \(s_{P_n^k}(e)<2\). Theorem~\ref{neverexceed}
therefore yields the following corollary.
\begin{cor}
    Let $k\ge 3$, and let $H$ be a connected $k$-uniform hypergraph. Then
\[
\rho_\alpha(H)  < \lambda_k(\alpha) \quad \forall \alpha\in[0,1)
\]
if and only if $H$ is a $k$-uniform loose path.
\end{cor}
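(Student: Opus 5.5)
The corollary follows from Theorem~\ref{neverexceed} and Theorem~\ref{neverexceedmain}; the plan is to prove the two directions separately.

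\emph{Sufficiency.} Let \(H=P_n^k\). Every vertex of a loose path has degree at most two, so \(\Delta(H)\leq2\). An edge contains at most two vertices of degree two: its two path vertices \(v_{i-1},v_i\), while the remaining \(k-2\) vertices are leaves. Hence~\eqref{never1} holds. For the strictness condition, the end edge \(e_1\) has \(v_0\) of degree one. If \(n\geq2\) it contains only \(v_1\) of degree two, and if \(n=1\) it contains none; in either case \(s_H(e_1)<2\). The second bullet of Theorem~\ref{neverexceed} then gives \(\rho_\alpha(H)<\lambda_k(\alpha)\) for every \(0\leq\alpha<1\).

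\emph{Necessity.} Assume \(\rho_\alpha(H)<\lambda_k(\alpha)\) for all \(\alpha\in[0,1)\).
\begin{enumerate}[(1)]
\item The strict inequality implies the non-strict one, so Theorem~\ref{neverexceedmain} shows that \(H\) is a loose path, a loose cycle, or \(C_2^k\).
\item Loose cycles and \(C_2^k\) have \(\rho_\alpha=\lambda_k(\alpha)\), either by Lemma~\ref{loose spec} or by the first bullet of Theorem~\ref{neverexceed}. In a loose cycle every edge has exactly two degree-two vertices, and in \(C_2^k\) the two shared vertices have degree two while all others are leaves. Either case contradicts the strict inequality.
\item Only the loose path remains, which completes the proof.
\end{enumerate}

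No step is a real obstacle. The only point needing care is the degenerate case \(n=1\) in the check of \(s_H(e)\) for the path, which is handled above.
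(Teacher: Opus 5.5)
Your proof is correct and follows the paper's route. For sufficiency, you use the strict case of Theorem~\ref{neverexceed} via an end edge with $s_H(e)<2$. For necessity, you use Theorem~\ref{neverexceedmain} and then exclude loose cycles and $C_2^k$, which attain $\lambda_k(\alpha)$. The paper leaves the necessity direction and the degenerate case $n=1$ implicit, and you simply spell them out.
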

\begin{remark}
When $k=2$, the condition $s_H(e)\le 2$ holds automatically. In this case the graphs with $\alpha$-spectral radius bounded above by that of any cycle are exactly simple graphs with maximum degree at most $2$, i.e., paths and cycles.
\end{remark}

\subsection{Hypergraphs whose adjacency spectral radius equals that of a loose cycle}

Throughout this subsection, let \(0<\alpha<1\), and write
\(\lambda=\lambda_k(\alpha)\) and
\(\xi=\frac{\lambda-2\alpha}{\lambda-\alpha}\).

\begin{lem}\label{lem:Tcriterion}
Let \(H\) be a connected \(k\)-uniform hypertree.  Suppose that positive
numbers \(T(v,e)\) are assigned to all incidences and satisfy
\begin{equation}\label{eq:Tcriterion-vertex}
 \sum_{e\ni v}T(v,e)- R_d\geq 0,
 \qquad \text{where}\quad
 R_d=\frac{\lambda-d\alpha}{\lambda-\alpha}=(d-1)\xi-(d-2)
\end{equation}
for every vertex \(v\) of degree \(d\), and
\begin{equation}\label{eq:Tcriterion-edge}
        \frac{\xi^2}{4}-\prod_{v\in e}T(v,e)\geq0
\end{equation}
for every edge.  If at least one of these inequalities is strict, then
\[
        \rho_\alpha(H)>\lambda_k(\alpha).
\]
\end{lem}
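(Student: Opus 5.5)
The plan is to turn the numbers \(T(v,e)\) into a consistently strictly \((\alpha,\beta)\)-supernormal labeling with \(\beta=\lambda^{-k}\), so \(\rho=\beta^{-1/k}=\lambda\). Lemma~\ref{lem:label-comparison}(2) then gives \(\rho_\alpha(H)>\lambda=\lambda_k(\alpha)\). The natural affine substitution is
\[
 B(v,e)=\frac{\alpha}{\lambda}+\frac{\lambda-\alpha}{\lambda}\,T(v,e)\qquad(v\in e),
\]
with \(B(v,e)=0\) otherwise. Since \(T(v,e)>0\) and \(\lambda>2\alpha>\alpha\) by~\eqref{2alpha}, we get \(B(v,e)>\alpha/\rho\) on every incidence, as the method requires.

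First I will check the vertex condition. For a vertex \(v\) of degree \(d\),
\[
 \sum_{e\ni v}B(v,e)=\frac{d\alpha}{\lambda}+\frac{\lambda-\alpha}{\lambda}\sum_{e\ni v}T(v,e).
\]
This is \(\geq1\) exactly when \(\sum_{e\ni v}T(v,e)\geq(\lambda-d\alpha)/(\lambda-\alpha)=R_d\), which is~\eqref{eq:Tcriterion-vertex}. Strictness transfers, since the map is affine with positive slope. The identity \(R_d=(d-1)\xi-(d-2)\) is a one-line check and is not needed for the argument.

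Next I will check the edge condition. Since \(B(v,e)-\alpha/\lambda=\frac{\lambda-\alpha}{\lambda}T(v,e)\), we have
\[
 \prod_{v\in e}\frac{B(v,e)-\alpha/\lambda}{1-\alpha}
 =\Bigl(\frac{\lambda-\alpha}{\lambda(1-\alpha)}\Bigr)^k\prod_{v\in e}T(v,e).
\]
The supernormal requirement is that this be \(\leq\beta=\lambda^{-k}\), i.e. \(\prod_{v\in e}T(v,e)\leq\bigl(\frac{1-\alpha}{\lambda-\alpha}\bigr)^k\). Lemma~\ref{loose spec} says \(\lambda\) satisfies~\eqref{eq:cycle}, so \((1-\alpha)^k=\bigl(\frac{\lambda-2\alpha}{2}\bigr)^2(\lambda-\alpha)^{k-2}\). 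Dividing by \((\lambda-\alpha)^k\) gives \(\bigl(\frac{1-\alpha}{\lambda-\alpha}\bigr)^k=\xi^2/4\). Hence the edge condition is exactly~\eqref{eq:Tcriterion-edge}, again with strictness preserved.

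Finally, \(H\) is a hypertree, so it has no Berge cycle and consistency is automatic. Therefore \(B\) is a consistent, strictly \((\alpha,\lambda^{-k})\)-supernormal labeling. The only real obstacle is choosing the right normalization constant \((\lambda-\alpha)/\lambda\) so that both the vertex and edge thresholds come out as stated; the loose-cycle equation~\eqref{eq:cycle} is what makes the edge threshold equal to \(\xi^2/4\).
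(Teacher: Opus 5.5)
Your proposal is correct and is essentially the paper's proof. It uses the same substitution $B(v,e)=\frac{\alpha}{\lambda}+\frac{\lambda-\alpha}{\lambda}T(v,e)$, the same use of \eqref{eq:cycle} to obtain $\bigl(\frac{1-\alpha}{\lambda-\alpha}\bigr)^k=\frac{\xi^2}{4}$ (the paper writes this as $\xi^2Q^k=4$ with $Q=\frac{\lambda-\alpha}{1-\alpha}$), and gets consistency from the absence of Berge cycles; the one check you leave implicit is $0<\lambda^{-k}<1$, from \eqref{eq:beta-range}, which the paper cites so that $\beta=\lambda^{-k}$ lies in the range $(0,1]$ required by the labeling method.
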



\begin{proof}
Define $Q=\frac{\lambda-\alpha}{1-\alpha}$. Equation \eqref{eq:cycle} gives 
\begin{equation}\label{eq:rq}
        \xi^2Q^k=4.
\end{equation}

Define the associated weighted incidence matrix by
\[
 B(v,e)=\frac{\alpha}{\lambda}
        +\frac{\lambda-\alpha}{\lambda}T(v,e).
\]
Then~\eqref{eq:Tcriterion-vertex} gives \(\sum_{e\ni v}B(v,e)\geq1\),
and
\[
 \prod_{v\in e}\frac{B(v,e)-\alpha/\lambda}{1-\alpha}
 =\lambda^{-k}Q^k\prod_{v\in e}T(v,e)
 \leq\lambda^{-k}
\]
by~\eqref{eq:rq}.  Hence \(B\) is an
\((\alpha,\lambda^{-k})\)-supernormal labeling.  It is consistent because
\(H\) is a hypertree, and it is strict by hypothesis.  The result follows
from Lemma~\ref{lem:label-comparison} and~\eqref{eq:beta-range}.
\end{proof}

\begin{remark}
    The quantities \(T(v,e)\) in Lemma~\ref{lem:Tcriterion} are auxiliary variables, not entries of a weighted incidence matrix.  The corresponding incidence weights \(B(v,e)\) are defined in its proof.  For the auxiliary variables, a vertex or edge condition is called \emph{normal} when equality holds in~\eqref{eq:Tcriterion-vertex} or~\eqref{eq:Tcriterion-edge}, respectively.
\end{remark}

The following two path labelings will be used for the $\alpha$-spectral comparison between the hypergraphs in Proposition \ref{prop:luman-inputs}(1) and loose cycles.

\begin{lem}\label{lem:point-arms}
Let \(0<\xi<1\) and define
\begin{equation}\label{eq:p-recursion}
 p_1=\frac{\xi^2}{4},
 \qquad
 p_{\ell+1}=\frac{\xi^2}{4(\xi-p_\ell)}\quad(\ell\geq1).
\end{equation}
A pendent loose path of length \(\ell\), attached to a branching vertex, can
be assigned positive values \(T(v,e)\) so that every edge and every internal
vertex is normal in Lemma~\ref{lem:Tcriterion}, with value \(p_\ell\) at the
branching vertex.  Moreover,
\begin{equation}\label{eq:p-bounds}
        0<p_\ell<\frac{\xi}{2}\qquad(\ell\geq1).
\end{equation}
\end{lem}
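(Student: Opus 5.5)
We label the pendent path from its free end inward and argue by induction on \(\ell\). Write the path as \(v_0e_1v_1e_2\cdots e_\ell v_\ell\), where \(v_0\) is the root (the branching vertex) and \(v_\ell\) is the free end. Each edge \(e_i\) contains \(v_{i-1}\), \(v_i\) and \(k-2\) leaf vertices. At a leaf vertex \(w\) (degree \(1\)), Lemma~\ref{lem:Tcriterion} requires \(R_1=1\). So we set \(T(w,e_i)=1\) for every leaf vertex, including \(v_\ell\) in \(e_\ell\); these vertices are then normal. Each internal vertex \(v_i\), \(1\le i\le\ell-1\), has degree \(2\), and its normality condition reads
\[
T(v_i,e_i)+T(v_i,e_{i+1})=R_2=\xi .
\]
The normality condition for \(e_i\) reads
\[
T(v_{i-1},e_i)\,T(v_i,e_i)=\frac{\xi^2}{4},
\]
since all other entries of \(e_i\) equal \(1\).

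The values are then forced recursively from the free end. Let \(q_j\) denote the value assigned at the root-side vertex of the \(j\)-th edge counted from the free end. For the last edge \(e_\ell\) we have \(T(v_\ell,e_\ell)=1\), so edge normality gives \(T(v_{\ell-1},e_\ell)=\xi^2/4=p_1\). Now suppose \(T(v_i,e_{i+1})=p_j\). Vertex normality at \(v_i\) gives \(T(v_i,e_i)=\xi-p_j\). Edge normality at \(e_i\) then gives
\[
T(v_{i-1},e_i)=\frac{\xi^2}{4(\xi-p_j)}=p_{j+1},
\]
which is exactly the recursion~\eqref{eq:p-recursion}. After \(\ell\) steps the value at the root \(v_0\) is \(p_\ell\), as claimed.

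Positivity of every \(T\) requires \(p_j>0\) and \(\xi-p_j>0\) for all \(j\). Both follow from the bound~\eqref{eq:p-bounds}, which we prove by induction:

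\begin{itemize}
\item \emph{Base case.} We have \(p_1=\xi^2/4>0\). Since \(0<\xi<1\) gives \(\xi<2\), we get \(\xi^2/4<\xi/2\).
\item \emph{Inductive step.} If \(0<p_\ell<\xi/2\), then \(\xi-p_\ell>\xi/2>0\). Hence
\[
0<p_{\ell+1}=\frac{\xi^2}{4(\xi-p_\ell)}<\frac{\xi^2}{4\cdot\xi/2}=\frac{\xi}{2}.
\]
\end{itemize}

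The map \(p\mapsto \xi^2/(4(\xi-p))\) has the double fixed point \(\xi/2\), which explains why \(\xi/2\) is the natural barrier.

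No step here is a real obstacle. The only point needing care is the bookkeeping of which incidence carries which value, in particular checking that vertex normality is required only at the internal vertices \(v_1,\dots,v_{\ell-1}\) and at the leaves, and not at the root \(v_0\). The root's condition involves its other edges and is handled later, when the arms are combined at the branching vertex.
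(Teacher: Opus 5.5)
Your proof is correct and follows essentially the same route as the paper. It uses the same assignment: \(T=1\) at leaf incidences, \(p_{\ell-i}\) and \(\xi-p_{\ell-i}\) at the two incidences of the \(i\)th internal vertex, and \(p_\ell\) at the root. It also uses the same one-line induction for \(0<p_\ell<\xi/2\). The only difference is presentational: you propagate the normality equations inward from the free end, which also shows the values are forced, whereas the paper writes the values down from the root side and checks them.
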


\begin{proof}
Let the path edges be \(e_1,\ldots,e_\ell\), starting at the branching
vertex \(v\), and let \(x_i=e_i\cap e_{i+1}\) for
\(1\leq i<\ell\).  Put \(T(v,e_1)=p_\ell\), and for
\(1\leq i<\ell\), put
\[
 T(x_i,e_i)=\xi-p_{\ell-i},
 \qquad
 T(x_i,e_{i+1})=p_{\ell-i}.
\]
Set \(T(v,e)=1\) at every leaf incidence.  Every internal vertex has sum
\(\xi\).  The recurrence~\eqref{eq:p-recursion} gives edge product
\(\frac{\xi^2}4\) on each path edge, including the terminal edge.  Finally,
\(p_1<\frac{\xi}2\), and \(p_\ell<\frac{\xi}2\) implies
\[
        \xi-p_\ell>\frac{\xi}{2}>0,
        \qquad
        p_{\ell+1}<\frac{\xi^2}{4(\xi/2)}=\frac{\xi}{2},
\]
so the denominator in~\eqref{eq:p-recursion} is positive at every step
and~\eqref{eq:p-bounds} follows.
\end{proof}

The values needed later are
\begin{equation*}
 p_1=\frac{\xi^2}{4},\qquad
 p_2=\frac{\xi}{4-\xi},\qquad
 p_3=\frac{\xi(4-\xi)}{4(3-\xi)},\qquad
 p_5=\frac{\xi(8-3\xi)}{4(5-2\xi)}.
\end{equation*}

\begin{lem}\label{lem:pendent-paths-edge}
Let \(e\) be an edge whose non-leaf vertices are \(x_1,\ldots,x_s\), and
write
\[
        h_i=\frac{T(x_i,e)}{\xi}\qquad(1\leq i\leq s),
\]
with \(T(v,e)=1\) at every leaf incidence on \(e\).  Then the edge
condition in Lemma~\ref{lem:Tcriterion} is normal exactly when
\(\prod_{i=1}^s h_i\) is
\begin{equation*}
        \theta_s=\frac{\xi^{2-s}}4.
\end{equation*}
Now let \(e_0,e_1,\ldots,e_\ell\) be a pendent loose path attached at the
branching edge \(e_0\):
\(e_{i-1}\cap e_i=\{x_i\}\) for \(1\leq i\leq\ell\), and the other vertices on
the path edges are leaf vertices.  Define
\begin{equation}\label{eq:y-g-recursion}
 y_\ell=\frac{\xi}{4},
 \qquad
 y_i=\frac{1}{4(1-y_{i+1})}\ (i=\ell-1,\ldots,1),
 \qquad
 g_\ell=1-y_1.
\end{equation}
There is a positive assignment of the variables \(T(v,e)\) on the path,
together with the incidence \((x_1,e_0)\), such that every path edge
\(e_i\) with \(i\geq1\) and every
internal shared vertex \(x_i\) is normal, and the value at the root edge is
\[
        T(x_1,e_0)=\xi g_\ell.
\]
Explicitly,
\[
        T(x_i,e_{i-1})=\xi(1-y_i),\qquad
        T(x_i,e_i)=\xi y_i\qquad(1\leq i\leq\ell),
\]
and \(T(v,e)=1\) at every leaf incidence on the path.  All these auxiliary
values are positive.
\end{lem}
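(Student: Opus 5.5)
The proof is a direct verification in two parts: first the normality formula for a single edge, then the explicit labeling of the pendent path, checked edge by edge and vertex by vertex.

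\emph{Step 1: the single-edge criterion.} On an edge \(e\) with non-leaf vertices \(x_1,\ldots,x_s\), every leaf incidence carries \(T=1\). Hence
\[
\prod_{v\in e}T(v,e)=\prod_{i=1}^sT(x_i,e)=\xi^s\prod_{i=1}^s h_i .
\]
The edge condition of Lemma~\ref{lem:Tcriterion} is normal exactly when this product equals \(\xi^2/4\). This happens exactly when \(\prod h_i=\xi^{2-s}/4=\theta_s\).

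\emph{Step 2: positivity of the recursion.} I will use the bound \(0<y_i<\tfrac12\) for all \(i\), proved by downward induction from \(i=\ell\). The base case is \(y_\ell=\xi/4\in(0,\tfrac14)\), since \(0<\xi<1\). For the inductive step, if \(0<y_{i+1}<\tfrac12\), then \(1-y_{i+1}\in(\tfrac12,1)\). This gives
\[
y_i=\frac{1}{4(1-y_{i+1})}\in\left(\tfrac14,\tfrac12\right),
\]
and the denominator is positive at every step. Consequently both \(\xi y_i\) and \(\xi(1-y_i)\) are positive, and \(g_\ell=1-y_1>\tfrac12>0\).

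\emph{Step 3: checking the conditions.} This is the only point needing care, namely that the indices of the recursion match the edge structure.
\begin{itemize}
\item Each internal vertex \(x_i\), for \(1\leq i\leq\ell\), lies in exactly \(e_{i-1}\) and \(e_i\). Its sum is \(\xi(1-y_i)+\xi y_i=\xi=R_2\), so it is normal.
\item For \(1\leq i<\ell\), the edge \(e_i\) has exactly the two non-leaf vertices \(x_i\) and \(x_{i+1}\). Its product is
\[
\xi y_i\cdot\xi(1-y_{i+1})=\frac{\xi^2}{4},
\]
by the recursion defining \(y_i\).
\item The terminal edge \(e_\ell\) has only the non-leaf vertex \(x_\ell\), with value \(\xi y_\ell=\xi^2/4\). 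This is normal by the case \(s=1\) of Step 1.
\item The root incidence is \(T(x_1,e_0)=\xi(1-y_1)=\xi g_\ell\), as claimed.
\end{itemize}

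No obstacle is expected beyond this positivity and index bookkeeping.
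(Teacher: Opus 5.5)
Your proposal is correct and follows the paper's proof essentially step for step. Both arguments use the edge product $\xi^s\prod_i h_i$ for the normality criterion, the complementary split $\xi(1-y_i)+\xi y_i=\xi=R_2$ at each shared vertex, the recursion giving product $\xi^2/4$ on each non-terminal path edge together with $T(x_\ell,e_\ell)=\xi^2/4$ on the terminal edge, and backward induction from $y_\ell=\xi/4$ to obtain $0<y_i<\tfrac12$ and hence positivity.
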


\begin{proof}
The product on \(e\) is \(\xi^s\prod_i h_i\), so the equality case
of~\eqref{eq:Tcriterion-edge} is exactly \(\prod_i h_i=\frac{\xi^{2-s}}4\).
For the pendent-path assignment, each shared vertex satisfies
\[
        T(x_i,e_{i-1})+T(x_i,e_i)=\xi.
\]
The terminal edge \(e_\ell\) is normal because \(T(x_\ell,e_\ell)=\xi y_\ell
=\frac{\xi^2}4\).  If \(1\leq i<\ell\), then the non-leaf incidences on \(e_i\)
are \(\xi y_i\) and \(\xi(1-y_{i+1})\), whose product is \(\frac{\xi^2}4\)
by~\eqref{eq:y-g-recursion}.  Thus every non-root edge of the pendent path is normal.
Finally \(0<y_\ell<\frac14\), and backward induction in~\eqref{eq:y-g-recursion}
gives \(0<y_i<\frac12\).  Hence all assigned incidences are positive, and the root
incidence has auxiliary value \(\xi(1-y_1)=\xi g_\ell\).
\end{proof}

The required explicit values are
\begin{equation*}
\begin{gathered}
 g_1=\frac{4-\xi}{4},\quad
 g_2=\frac{3-\xi}{4-\xi},\quad
 g_3=\frac{8-3\xi}{4(3-\xi)},\quad
 g_4=\frac{5-2\xi}{8-3\xi},\\
 g_5=\frac{12-5\xi}{4(5-2\xi)},\quad
 g_6=\frac{7-3\xi}{12-5\xi},\quad
 g_7=\frac{16-7\xi}{4(7-3\xi)},\\
 g_8=\frac{9-4\xi}{16-7\xi},
 \qquad
 g_{14}=\frac{15-7\xi}{28-13\xi}.
\end{gathered}
\end{equation*}

\begin{pop}\label{prop:infinite-threshold-labelings}
For every \(k\geq3\) and \(0<\alpha<1\), each of
\[
        \Dtil_n^{k}\ (n\geq5),\qquad
        \Btil_n^{k}\ (n\geq8),\qquad
        \BDtil_n^{k}\ (n\geq6)
\]
admits positive values \(T(v,e)\) satisfying
\eqref{eq:Tcriterion-vertex} and~\eqref{eq:Tcriterion-edge}, with at least
one of these inequalities strict.  More precisely:
\begin{enumerate}[(1)]
\item In \(\Dtil_n^{k}\), both branching-vertex inequalities are strict and all
other vertex and edge conditions are normal.
\item In \(\Btil_n^{k}\), both branching-edge inequalities are strict and all other
conditions are normal.
\item In \(\BDtil_n^{k}\), the branching-vertex and branching-edge
inequalities are both strict and all other conditions are normal.
\end{enumerate}
Consequently, if \(H\) is any of the listed hypergraphs, then \(H\) admits a
consistently strictly \((\alpha,\lambda_k(\alpha)^{-k})\)-supernormal
incidence labeling and satisfies
\(\rho_\alpha(H)>\lambda_k(\alpha)\) for $0<\alpha<1$.
\end{pop}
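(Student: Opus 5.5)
The plan is to build, for each of the three families, an explicit assignment of auxiliary values \(T(v,e)\) from the pendent-path labelings of Lemmas~\ref{lem:point-arms} and~\ref{lem:pendent-paths-edge}. The internal path will be labeled normally by the same propagation rule. The remaining check is that the branching vertices and edges satisfy \eqref{eq:Tcriterion-vertex} and~\eqref{eq:Tcriterion-edge} strictly, in the stated pattern. Since each listed hypergraph is a hypertree, Lemma~\ref{lem:Tcriterion} then yields a consistent strictly supernormal labeling with \(\beta=\lambda_k(\alpha)^{-k}\), and hence \(\rho_\alpha(H)>\lambda_k(\alpha)\). Throughout, \(0<\xi<1\) holds because \(\lambda>2\alpha\) by~\eqref{2alpha}. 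The degree-\(d\) threshold is \(R_d=(d-1)\xi-(d-2)\), so \(R_2=\xi\) and \(R_3=2\xi-1\).

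For \(\Dtil_n^k\), each branching vertex \(v\) has degree \(3\). I would put \(T=p_1=\xi^2/4\) on the two length-one pendent edges at \(v\). The internal path between the two branching vertices has \(n-4\) edges, and I would label it like a pendent path with values \(\xi\)-complementary at each degree-two vertex and edge product \(\xi^2/4\). Such a normal labeling of a path of length \(m\) between two free ends exists with end values \(t\) and \(\xi^2/(4\cdot\ldots)\). The natural choice is the symmetric fixed point \(T=\xi/2\) at every non-leaf incidence, since \((\xi/2)^2=\xi^2/4\) and \(\xi/2+\xi/2=\xi\). This makes every internal vertex and edge normal for any length. At the branching vertex the sum is then \(\xi^2/2+\xi/2\). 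The strict inequality \(\xi^2/2+\xi/2>2\xi-1\) reduces to \((\xi-1)(\xi-2)>0\), which holds for \(\xi<1\). This settles~(1), with all non-branching conditions normal.

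For \(\Btil_n^k\) and \(\BDtil_n^k\), I would again label the internal path by the constant value \(\xi/2\), or more precisely give each internal-path incidence at a branching edge the value \(\xi/2\). On a branching edge \(e\) carrying pendent paths of lengths \(1\) and \(2\), Lemma~\ref{lem:pendent-paths-edge} gives root values \(\xi g_1\) and \(\xi g_2\). With the internal incidence \(\xi/2\), the edge product is
\[
\xi^3 g_1g_2/2=\frac{\xi^3(3-\xi)}{8}.
\]
This must be compared with \(\xi^2/4\), and strictness becomes \(\xi(3-\xi)<2\), that is \((\xi-1)(\xi-2)>0\), which is again true. For \(\BDtil_n^k\), the branching vertex is handled exactly as in \(\Dtil\). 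When \(n=6\) the vertex lies on the edge, so the value \(\xi/2\) sits on the single shared incidence and both checks above still apply. This gives (2) and (3).

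The main obstacle is the bookkeeping of the degenerate short cases: \(q=0\) in \(\Btil_8\), where the two branching edges meet at one vertex, and \(n=5\) in \(\Dtil\), where the internal path is a single edge. In each case I must check that the shared vertex's sum is exactly \(\xi/2+\xi/2=\xi=R_2\) and that no incidence is double-counted. I expect the constant \(\xi/2\) on internal paths to make these checks routine.
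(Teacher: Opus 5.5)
Your proposal is correct and follows the paper's proof essentially verbatim. It uses the same pendent-path labelings from Lemmas~\ref{lem:point-arms} and~\ref{lem:pendent-paths-edge}, the constant value $\xi/2$ on every non-leaf incidence of the internal path (including the degenerate short cases), the same gaps $(1-\xi)(2-\xi)/2$ at each branching vertex and $\xi^2/4-\xi^3(3-\xi)/8=\xi^2(1-\xi)(2-\xi)/8>0$ at each branching edge, and finishes with Lemma~\ref{lem:Tcriterion} on a hypertree.

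Two small cleanups: the aside about end values ``$t$ and $\xi^2/(4\cdot\ldots)$'' is garbled and unnecessary once you adopt the constant labeling. Also, $\xi<1$ comes from $\alpha>0$, whereas $\lambda>2\alpha$ gives $\xi>0$.
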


\begin{proof}
For \(\Dtil_n^{k}\), label each of the four length-one pendent paths by
Lemma~\ref{lem:point-arms}.  Set \(T(v,e)=\frac{\xi}{2}\) at every non-leaf incidence
on the loose path of length \(n-4\) joining the two branching vertices, and
set \(T(v,e)=1\) at every leaf incidence.  Every edge of the internal path
has product \(\frac{\xi^2}{4}\), and every internal vertex on that path has sum \(\xi\).
At each of
the two branching vertices the excess over \(R_3=2\xi-1\) is
\begin{equation}\label{eq:Dtil-gap}
 2p_1+\frac{\xi}{2}-R_3
 =\frac{(1-\xi)(2-\xi)}2>0.
\end{equation}
Thus both branching-vertex inequalities are strict.

For \(\Btil_n^{k}=G_{1,2:n-8:1,2}^{k}\), label all four exterior edge paths
normally by Lemma~\ref{lem:pendent-paths-edge}.  On the central internal path, assign
\(T=\frac\xi2\) to each non-leaf incidence on the internal path.  Then every internal
vertex on the internal path has sum \(\xi\), and every internal edge of that
path has product
\(\frac{\xi^2}4\).  At each branching edge, the three factors \(\frac{T}{\xi}\) have
product \(\frac12g_1g_2\), and
\begin{equation}\label{eq:Btil-gap}
 \theta_3-\frac12g_1g_2
 =\frac{1}{4\xi}-\frac12g_1g_2
 =\frac{(2-\xi)(1-\xi)}{8\xi}>0.
\end{equation}
Hence both branching-edge inequalities are strict.

Finally consider \(\BDtil_n^{k}\).  Label the two pendent paths at the
branching vertex by Lemma~\ref{lem:point-arms} and the two pendent paths at
the branching edge, of lengths \(1,2\), by
Lemma~\ref{lem:pendent-paths-edge}.  Set \(T(v,e)=\frac\xi2\) at every non-leaf
incidence on the internal path.  When the internal path parameter is zero,
this means assigning \(T=\frac\xi2\) to the incidence of the branching vertex on
the branching edge.  All internal path conditions are normal.  The branching
vertex has the positive gap~\eqref{eq:Dtil-gap}, while the branching edge has
the positive gap~\eqref{eq:Btil-gap}.  These are the only strict conditions.
Under uniform extension, set \(T(v,e)=1\) at each new leaf incidence, the same
assignments therefore work for every \(k\geq3\) obtained by the stated
uniform extension.
\end{proof}

\begin{pop}\label{prop:finite-threshold-labelings}
If $H$ is one of the following hypergraphs
\[
\begin{gathered}
 S_4^{k},\ \tilde{E}_6^{k},\ \tilde{E}_7^{k},\ \tilde{E}_8^{k},\ 
 F_{2,3,4}^{k},\ F_{2,2,7}^{k},\ F_{1,5,6}^{k},\
 F_{1,4,8}^{k},\ F_{1,3,14}^{k},\
 G_{1,1:0:1,4}^{k},\ G_{1,1:6:1,3}^{k},
\end{gathered}
\]
where $k\geq3$, or $H_{1,1,2,2}^{k}$ with $k\geq 4$, then \(H\) admits a
consistently strictly \((\alpha,\lambda_k(\alpha)^{-k})\)-supernormal
incidence labeling and satisfies
\(\rho_\alpha(H)>\lambda_k(\alpha)\) for $0<\alpha<1$.
\end{pop}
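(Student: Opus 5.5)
The plan is to apply Lemma~\ref{lem:Tcriterion} directly. Every listed hypergraph is a hypertree, so consistency is automatic. We also have $0<\xi<1$, because $\lambda_k(\alpha)>2\alpha$ by~\eqref{2alpha} and $\alpha>0$. In each case I will label every pendent loose path normally:
\begin{itemize}
\item paths hanging at a branching vertex via Lemma~\ref{lem:point-arms} (root value $p_\ell$);
\item paths hanging at a branching edge via Lemma~\ref{lem:pendent-paths-edge} (root value $\xi g_\ell$);
\item every leaf incidence gets $T=1$, as in the proof of Proposition~\ref{prop:infinite-threshold-labelings}.
\end{itemize}
Then only the single branching vertex or branching edge (or the central path, for the $G$ cases) remains to be checked. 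It suffices to show that the remaining inequality holds with a positive gap, viewed as a rational function of $\xi\in(0,1)$. I expect each gap to factor with a factor $(1-\xi)$, exactly as in~\eqref{eq:Dtil-gap} and~\eqref{eq:Btil-gap}.

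\emph{Vertex-centred cases.} For $S_4^k$ the vertex sum is $4p_1=\xi^2$. Since $R_4=3\xi-2$, the gap is $\xi^2-3\xi+2=(1-\xi)(2-\xi)>0$. For $\Etil_6^k=E_{2,2,2}$ the gap is $3p_2-R_3=2(1-\xi)(2-\xi)/(4-\xi)>0$. Similarly, for $\Etil_7^k=E_{1,3,3}$ and $\Etil_8^k=E_{1,2,5}$ I will compute $p_1+2p_3-R_3$ and $p_1+p_2+p_5-R_3$ from the explicit values listed after Lemma~\ref{lem:point-arms}. After clearing the positive denominators, I expect a numerator of the form $(1-\xi)\cdot(\text{positive on }(0,1))$.

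\emph{Edge-centred cases.} For $F_{a,b,c}^k$ the only non-normal condition is the branching edge. There I need $\theta_3-g_ag_bg_c=\frac1{4\xi}-g_ag_bg_c>0$. The $g_\ell$ telescope nicely. For instance, $g_2g_3g_4=\frac{5-2\xi}{4(4-\xi)}$, and the inequality becomes $4-\xi>\xi(5-2\xi)$, i.e.\ $2(1-\xi)(2-\xi)>0$. The remaining cases $F_{2,2,7},F_{1,5,6},F_{1,4,8},F_{1,3,14}$ go the same way with the listed $g_1,\dots,g_8,g_{14}$. For $H_{1,1,2,2}^k$ ($k\geq4$) the check is $\theta_4-g_1^2g_2^2=\frac1{4\xi^2}-g_1^2g_2^2>0$. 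This is equivalent to $2\xi g_1g_2<1$, i.e.\ $\xi(3-\xi)<2$, i.e.\ $(1-\xi)(2-\xi)>0$.

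\emph{The $G$ cases, which I expect to be the main obstacle.} Here two branching edges are joined by a central path, so one must decide how to split the value at the shared vertices.
\begin{itemize}
\item For $G_{1,1:0:1,4}^k$ the two branching edges share one vertex $w$. I put $T(w,e)=t$ and $T(w,e')=\xi-t$, so that $w$ is normal. I choose $t$ to make the edge carrying the pendent paths $1,1$ exactly normal: $g_1^2\,t/\xi=\theta_3$. I then verify the strict inequality $\theta_3-g_1g_4(\xi-t)/\xi>0$ on the other edge, and check $0<t<\xi$.
\item For $G_{1,1:6:1,3}^k$ I start from the edge with pendent paths $1,1$ and make it normal. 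I then propagate along the six internal edges with the normal recursion, with vertex sums $\xi$ and edge products $\xi^2/4$, which is a Möbius-type iteration like~\eqref{eq:y-g-recursion}. Finally I verify strictness on the far edge with pendent paths $1,3$.
\end{itemize}
The difficulty is showing that all propagated values stay positive and that the final rational inequality has a sign-definite numerator on $(0,1)$. If the direct split fails, I would try to place the slack symmetrically, using $T=\xi/2$ on the central path as in $\Btil_n^k$. With that choice both branching edges become strict, and the resulting inequalities are easy to check.

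With all gaps nonnegative and at least one strictly positive, Lemma~\ref{lem:Tcriterion} yields the supernormal labeling and $\rho_\alpha(H)>\lambda_k(\alpha)$. Uniform extension only adds leaf incidences with $T=1$, so the same labeling works for all $k$.
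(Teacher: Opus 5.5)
\textbf{There is a genuine gap in the two $G$ cases.} Outside those cases your plan is the paper's proof. You label the pendent paths by Lemmas~\ref{lem:point-arms} and~\ref{lem:pendent-paths-edge} and check one gap at each branching vertex or edge. Your computations for $S_4^k$, $\Etil_6^k$, $F_{2,3,4}^k$ and $H_{1,1,2,2}^k$ are correct. The remaining vertex- and edge-centred gaps do factor as $(1-\xi)$ times a positive cofactor.

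For $G_{1,1:0:1,4}^k$ and $G_{1,1:6:1,3}^k$, neither of your labelings works on all of $0<\xi<1$.
\begin{itemize}
\item \emph{Direct normal split, $G_{1,1:0:1,4}^k$.} Making the $(1,1)$-edge normal forces $t=4/(4-\xi)^2$. Then $t<\xi$ holds only when $\xi(4-\xi)^2>4$. Since $\xi\to0$ as $\alpha\to1$, the value $T(w,e')=\xi-t$ is negative for $\xi$ below roughly $0.29$.
\item \emph{Direct normal split, $G_{1,1:6:1,3}^k$.} The starting factor is $x_0=4/(\xi(4-\xi)^2)$. The normal recursion $x\mapsto1/(4(1-x))$ keeps all seven complements $1-x_0,\dots,1-x_6$ positive only when $\xi(4-\xi)^2>7$. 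This is the condition $z<\vartheta_6=8/7$ of Lemma~\ref{lem:propagation-domain}(2).
\item \emph{Fallback $T=\xi/2$.} This fails at the other end of the range. On the $(1,1)$-edge the product of the $T/\xi$-factors is $g_1^2/2=(4-\xi)^2/32$. This is at most $\theta_3=1/(4\xi)$ only when $\xi(4-\xi)^2\le8$. As $\xi\to1$ the product tends to $9/32$ while $\theta_3\to1/4$.
\end{itemize}
The last failure is unavoidable for any symmetric choice. These hypergraphs have $\rho_0=4^{1/k}$, so at $\xi=1$ the labeling is forced to be normal. There the split at the $(1,1)$-edge is $4/9$ versus $5/9$, not $1/2$ versus $1/2$. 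For $\Btil_n^k$ the two ends are identical, which is why $\xi/2$ works there.

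\textbf{The missing idea is to freeze the central path at its $\xi=1$ normal values.} The internal conditions are $h_1+h_2=1$ at degree-two vertices and $h_1h_2=\frac14$ on two-vertex edges. Neither involves $\xi$, so both stay normal for every $\xi$.
\begin{itemize}
\item For $G_{1,1:0:1,4}^k$, take the factors $\frac49$ and $\frac59$.
\item For $G_{1,1:6:1,3}^k$, take $x_0=\frac49$ and iterate to $\frac9{20},\frac5{11},\frac{11}{24},\frac6{13},\frac{13}{28},\frac7{15}$. Use $\frac8{15}$ at the right edge.
\end{itemize}
Both branching edges are then strict, with gaps
\[
 \frac{(1-\xi)(\xi^2-7\xi+9)}{36\xi},\qquad
 \frac{(1-\xi)(10\xi^2-55\xi+72)}{36\xi(8-3\xi)},\qquad
 \frac{(1-\xi)(6\xi^2-34\xi+45)}{60\xi(3-\xi)},
\]
all positive on $(0,1)$.

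You could instead patch your own argument in one of two ways. You could combine your two labelings on their overlapping $\xi$-ranges. Or you could use the modified propagation of Lemma~\ref{lem:propagation-domain}(3) wherever normal propagation leaves the positivity domain. Either way you would still owe the far-edge rational inequality after six M\"obius steps. The constant split avoids that entirely.
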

\begin{proof}
For
\(H\in\{S_4^{k},\tilde{E}_6^{k},\tilde{E}_7^{k},\tilde{E}_8^{k}\}\),
label each pendent path using Lemma~\ref{lem:point-arms}. The resulting
left-hand sides of the branching-vertex inequalities are listed in
Table~\ref{tab:point-branch-gaps}.

\begin{table}[!hbp]
\centering
\small
\caption{LHS of branching-vertex inequalities for $S_4^{k},\ \tilde{E}_6^{k},\ \tilde{E}_7^{k},\ \tilde{E}_8^{k}$}
\label{tab:point-branch-gaps}
\begin{tabular}{@{}ll@{}}
\toprule
Hypergraph &  LHS of branching-vertex inequality\\
\midrule
\(S_4\) &
\(\displaystyle 4p_1-R_4=(1-\xi)(2-\xi)\)\\[2mm]
\(\Etil_6=E_{2,2,2}\) &
\(\displaystyle 3p_2-R_3=\frac{2(1-\xi)(2-\xi)}{4-\xi}\)\\[3mm]
\(\Etil_7=E_{1,3,3}\) &
\(\displaystyle p_1+2p_3-R_3=
 \frac{(1-\xi)(2-\xi)(6-\xi)}{4(3-\xi)}\)\\[3mm]
\(\Etil_8=E_{1,2,5}\) &
\(\displaystyle p_1+p_2+p_5-R_3=
 \frac{(1-\xi)(2-\xi)(\xi^2-10\xi+20)}
 {2(4-\xi)(5-2\xi)}\)\\
\bottomrule
\end{tabular}
\end{table}

For $H\in \{F_{2,3,4}^{k},\ F_{2,2,7}^{k},\ F_{1,5,6}^{k}
,\ F_{1,4,8}^{k},\ F_{1,3,14}^{k},\ G_{1,1:0:1,4}^{k},\ G_{1,1:6:1,3}^{k},\ H_{1,1,2,2}^{k}\}$, label each pendent path using Lemma~\ref{lem:pendent-paths-edge}. For $G_{1,1:0:1,4}^{k}$, the two \(T/\xi\)-factors on the connecting
path are \(\frac49\) and \(\frac59\).  For \(G_{1,1:6:1,3}\), use the path
factors
\begin{equation*}
\begin{aligned}
 x_0&=\frac49, & x_1&=\frac{9}{20}, &
 x_2&=\frac{5}{11}, & x_3&=\frac{11}{24},\\
 x_4&=\frac{6}{13}, & x_5&=\frac{13}{28}, &
 x_6&=\frac{7}{15}, & x_7&=\frac{8}{15}.
\end{aligned}
\end{equation*}
Here the internal path is oriented from the left branching edge to the right branching edge. \(x_0\) and \(x_7\) are the \(T/\xi\)-factors
at the branching edges, and the intervening path incidences
are assigned the
corresponding complementary \(\frac{T}\xi\)-factors so that the \(i\)th internal
edge of the internal path has product \((1-x_{i-1})x_i\).  The identities
\[
        (1-x_{i-1})x_i=\frac14\qquad(1\leq i\leq6)
\]
make the six internal edges of the internal path normal. At each internal vertex on that path, the two incident \(\frac{T}\xi\)-factors are complementary. The left-hand sides of the branching-edge inequalities are listed in Table~\ref{tab:branch-edge-gaps}.

\begin{table}[!htbp]
\centering
\footnotesize
\caption{Left-hand sides of the branching-edge inequalities for the listed hypergraphs from Proposition~\ref{prop:luman-inputs}(1)}
\label{tab:branch-edge-gaps}
\begin{tabularx}{\textwidth}{@{}>{\raggedright\arraybackslash}p{0.28\textwidth}X@{}}
\toprule
Hypergraph &   LHS of branching-edge inequality\\
\midrule
\(F_{2,3,4}\) &
\(\displaystyle \xi^3(\frac{1}{4\xi}-g_2g_3g_4)=
 \frac{\xi^3(2-\xi)(1-\xi)}{2\xi(4-\xi)}\)\\[2mm]
\(F_{2,2,7}\) &
\(\displaystyle \xi^3(\frac{1}{4\xi}-g_2^2g_7)=
 \frac{\xi^3(2-\xi)(1-\xi)(7\xi^2-40\xi+56)}
 {4\xi(4-\xi)^2(7-3\xi)}\)\\[3mm]
\(F_{1,5,6}\) &
\(\displaystyle \xi^3(\frac{1}{4\xi}-g_1g_5g_6)=
 \frac{\xi^3(2-\xi)(1-\xi)(10-3\xi)}{16\xi(5-2\xi)}\)\\[3mm]
\(F_{1,4,8}\) &
\(\displaystyle \xi^3(\frac{1}{4\xi}-g_1g_4g_8)=
 \frac{\xi^3(2-\xi)(1-\xi)(4\xi^2-23\xi+32)}
 {2\xi(8-3\xi)(16-7\xi)}\)\\[3mm]
\(F_{1,3,14}\) &
\(\displaystyle \xi^3(\frac{1}{4\xi}-g_1g_3g_{14})=
 \frac{\xi^3(2-\xi)(1-\xi)(21\xi^2-122\xi+168)}
 {16\xi(3-\xi)(28-13\xi)}\)\\[3mm]
\addlinespace[1mm]
\(G_{1,1:0:1,4}\), left &
\(\displaystyle \xi^3(\frac{1}{4\xi}-\frac49g_1^2)=
 \frac{\xi^3(1-\xi)(\xi^2-7\xi+9)}{36\xi}\)\\[3mm]
\(G_{1,1:0:1,4}\), right &
\(\displaystyle \xi^3(\frac{1}{4\xi}-\frac59g_1g_4)=
 \frac{\xi^3(1-\xi)(10\xi^2-55\xi+72)}{36\xi(8-3\xi)}\)\\[3mm]
\(G_{1,1:6:1,3}\), left &
\(\displaystyle \xi^3(\frac{1}{4\xi}-\frac49g_1^2)=
 \frac{\xi^3(1-\xi)(\xi^2-7\xi+9)}{36\xi}\)\\[3mm]
\(G_{1,1:6:1,3}\), right &
\(\displaystyle \xi^3(\frac{1}{4\xi}-\frac{8}{15}g_1g_3)=
 \frac{\xi^3(1-\xi)(6\xi^2-34\xi+45)}{60\xi(3-\xi)}\)\\[3mm]
\(H_{1,1,2,2}\) &
\(\displaystyle \xi^4(\frac{1}{4\xi^2}-g_1^2g_2^2)=
 \frac{\xi^4(2-\xi)(1-\xi)(2+3\xi-\xi^2)}{16\xi^2}\)\\
\bottomrule
\end{tabularx}
\end{table}

It remains to show that the left-hand sides of the inequalities in
Tables~\ref{tab:point-branch-gaps} and~\ref{tab:branch-edge-gaps} are strictly
positive. Every displayed denominator is positive for \(0<\xi<1\). Each of
\[
\begin{gathered}
 \xi^2-10\xi+20,\quad 7\xi^2-40\xi+56,\quad
 4\xi^2-23\xi+32,\quad 21\xi^2-122\xi+168,\\
 \xi^2-7\xi+9,\quad 10\xi^2-55\xi+72,\quad
 6\xi^2-34\xi+45
\end{gathered}
\]
is decreasing on \([0,1]\) and positive at \(1\), while
\(2+3\xi-\xi^2>0\). Thus every listed left-hand side is strictly positive,
and Lemma~\ref{lem:Tcriterion} completes the proof.
\end{proof}

\begin{thm}\label{thm:equality-families}
Let \(k\geq3\), and let \(H\) be a finite connected \(k\)-uniform
hypergraph with \(\rho_0(H)\leq4^{1/k}\).  Then
\[
        \rho_\alpha(H)-\lambda_k(\alpha)>0
        \qquad \text{for every}\quad0<\alpha<1
\]
if and only if \(H\) belongs to the list in Proposition~\ref{prop:luman-inputs}(1) and is neither a
loose cycle nor \(C_2^{k}\).
\end{thm}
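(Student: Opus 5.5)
The two directions can be handled separately, since Proposition~\ref{prop:luman-inputs} splits the hypergraphs with \(\rho_0(H)\leq4^{1/k}\) into list~(1) (spectral radius exactly \(4^{1/k}\)) and list~(2) (strictly below).

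For the ``if'' direction, let \(H\) be in list~(1) but neither a loose cycle nor \(C_2^k\). Then \(H\) is one of three infinite families \(\Dtil_n^k,\Btil_n^k,\BDtil_n^k\), one of the exceptional hypergraphs, or \(H_{1,1,2,2}^k\) with \(k\ge4\). These cases are covered directly by Propositions~\ref{prop:infinite-threshold-labelings} and~\ref{prop:finite-threshold-labelings}. Each one supplies a consistently strictly \((\alpha,\lambda_k(\alpha)^{-k})\)-supernormal labeling, so \(\rho_\alpha(H)>\lambda_k(\alpha)\) for all \(0<\alpha<1\). The only bookkeeping I need is to check that the two propositions together exhaust list~(1) minus the cycles and \(C_2^k\), including the uniformity restriction on \(H_{1,1,2,2}^k\).

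For the ``only if'' direction, suppose \(\rho_\alpha(H)>\lambda_k(\alpha)\) for every \(0<\alpha<1\), and argue by contradiction that \(H\) lies in list~(1) and is neither a loose cycle nor \(C_2^k\). If \(H\) were a loose cycle or \(C_2^k\), Lemma~\ref{loose spec} would give equality for all \(\alpha\), which is impossible. So it remains to rule out \(\rho_0(H)<4^{1/k}\), that is, every \(H\) in list~(2).

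My approach is continuity at \(\alpha=0\). The map \(\alpha\mapsto\rho_\alpha(H)\) is continuous, because \(\mathcal A_\alpha(H)\) depends continuously on \(\alpha\) and the spectral radius of a nonnegative tensor is continuous in its entries. Likewise \(\lambda_k(\alpha)\) is continuous, being the largest root of~\eqref{eq:cycle}, a simple root whose coefficients depend continuously on \(\alpha\). If \(\rho_0(H)<4^{1/k}=\lambda_k(0)\), then \(\rho_\alpha(H)<\lambda_k(\alpha)\) for all sufficiently small \(\alpha>0\). This contradicts the hypothesis, so \(\rho_0(H)=4^{1/k}\) and \(H\) is in list~(1). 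The paths in list~(2) are also excluded more directly by Theorem~\ref{neverexceedmain}.

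The main obstacle I expect is justifying continuity rigorously. For \(\lambda_k\) I would use the implicit function theorem: the derivative of the left side of~\eqref{eq:cycle} in \(x\) is positive for \(x>2\alpha\). For \(\rho_\alpha(H)\) I would use the variational characterization \(\rho_\alpha(H)=\max_{\|x\|_k=1,\,x\geq0}\mathcal A_\alpha(H)x^k\), valid for symmetric nonnegative tensors. This is a maximum of functions that are affine in \(\alpha\), with coefficients bounded uniformly on the compact unit sphere, so it is convex, and in fact Lipschitz, in \(\alpha\); that gives continuity on \([0,1)\). Everything else reduces to invoking the labelings already constructed.
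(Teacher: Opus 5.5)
Your proposal is correct and follows essentially the same route as the paper. The converse comes from Propositions~\ref{prop:infinite-threshold-labelings} and~\ref{prop:finite-threshold-labelings}, and the forward direction uses continuity at \(\alpha=0\) to force \(\rho_0(H)=4^{1/k}\), with loose cycles and \(C_2^k\) excluded because they attain \(\lambda_k(\alpha)\). You cite Lemma~\ref{loose spec} for that exclusion where the paper cites Theorem~\ref{neverexceedmain}, to the same effect. The paper simply asserts continuity, so your justification via the variational characterization and the simple largest root of~\eqref{eq:cycle} is a welcome addition.
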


\begin{proof}
Assume first that \(\rho_\alpha(H)>\lambda_k(\alpha)\) for every
\(0<\alpha<1\).  If \(\rho_0(H)<4^{1/k}\), continuity would give
\(\rho_\alpha(H)<\lambda_k(\alpha)\) for all sufficiently small positive
\(\alpha\), a contradiction. Thus \(\rho_0(H)=4^{1/k}\), and \(H\)
belongs to the list in Proposition~\ref{prop:luman-inputs}(1).
Theorem~\ref{neverexceedmain} excludes the loose cycles and \(C_2^{k}\).
The converse follows from Propositions~\ref{prop:infinite-threshold-labelings}
and~\ref{prop:finite-threshold-labelings}.
\end{proof}

\subsection{Hypergraphs with adjacency spectral radius smaller than that of a loose cycle}
In this subsection, we investigate hypergraphs whose adjacency spectral radius
is smaller than that of a loose cycle. We prove that, unless \(H\) is a loose
path, there is a unique \(\alpha_1\in(0,1)\) such that
\[
\begin{aligned}
 &\rho_\alpha(H)-\lambda_k(\alpha)<0 &&(0\leq\alpha<\alpha_1),\\
 &\rho_{\alpha_1}(H)-\lambda_k(\alpha_1)=0,&\\
 &\rho_\alpha(H)-\lambda_k(\alpha)>0 &&(\alpha_1<\alpha<1).
\end{aligned}
\]

For comparison with a loose cycle, put \(\lambda=\lambda_k(\alpha)\).  The
normal edge equation for the cycle is
\begin{equation}\label{eq:cycle-normal}
 \left(\frac{1/2-\alpha/\lambda}{1-\alpha}\right)^2
 \left(\frac{1-\alpha/\lambda}{1-\alpha}\right)^{k-2}
 =\lambda^{-k}.
\end{equation}
This is exactly~\eqref{eq:cycle} in incidence-labeling form.

Fix \(0\leq\alpha<1\), let \(\lambda=\lambda_k(\alpha)\), and set
\begin{equation*}
 y=\frac{\alpha}{\lambda},\qquad
 t=\frac{1-2y}{2(1-y)}.
\end{equation*}
Then \(0<t\leq1/2\), and~\eqref{eq:cycle} yields
\begin{equation}\label{eq:psi}
 \alpha=\psi_k(t):=\frac{1-2t}{1-2t+t^{2/k}}.
\end{equation}
Moreover,
\begin{equation*}
 \psi_k'(t)=
 -\frac{2t^{2/k-1}(1+(k-2)t)}
 {k(1-2t+t^{2/k})^2}<0.
\end{equation*}
Thus \(t=1/2\) corresponds to \(\alpha=0\), while \(t\downarrow0\)
corresponds to \(\alpha\uparrow1\).

For a hypertree, assign weight \(1\) to every leaf incidence and write each
non-leaf incidence in the form
\begin{equation}\label{eq:normalized-incidence}
        B(v,e)=y+t(1-y)a(v,e).
\end{equation}
If an edge contains \(s\) non-leaf vertices with variables
\(a_1,\ldots,a_s\), then~\eqref{eq:cycle-normal} shows that its normal edge
condition is
\begin{equation}\label{eq:edge-normal-a}
        \prod_{i=1}^s a_i=t^{2-s}.
\end{equation}
Indeed, a non-leaf incidence supplies the factor
\[
 \frac{B(v,e)-y}{1-\alpha}
 =\frac{t(1-y)}{1-\alpha}a(v,e),
\]
whereas a leaf incidence with \(B(v,e)=1\) supplies the factor
\((1-y)/(1-\alpha)\). Thus an edge with \(s\) non-leaf incidences and
\(k-s\) leaf vertices has edge product
\[
 \lambda^{-k}Q^k t^s\prod_{i=1}^sa_i,\qquad
 Q=\frac{\lambda-\alpha}{1-\alpha}.
\]
The equation \eqref{eq:cycle} gives \(Q^k=t^{-2}\).  Hence the factors supplied
by the leaf vertices are already absorbed in this identity, and the same
equation \(\prod_i a_i=t^{2-s}\) remains valid after uniform extension.
At vertices of degree two and three, the normal vertex conditions become,
respectively,
\begin{equation*}
        a_1+a_2=2,
        \qquad
        a_1+a_2+a_3=4-\frac1t.
\end{equation*}

Along an ordinary loose path, propagation under the normality equations is governed by
\begin{equation*}
 f(x)=\frac{1}{2-x},\qquad
 f^m(x)=\frac{m-(m-1)x}{m+1-mx}\quad(m\geq0).
\end{equation*}
For \(0\leq x\leq1/2\),
\begin{equation}\label{eq:f-derivative}
        (f^m)'(x)=\frac{1}{(m+1-mx)^2}\leq1.
\end{equation}
A pendent path of length \(\ell\), propagated from its leaf end, has value
\begin{equation*}
        f^{\ell-1}(t)
\end{equation*}
at a branching vertex.  If it is attached to a branching edge, the value at
that edge is
\begin{equation*}
 u_\ell(t)=2-f^{\ell-1}(t)
 =\frac{\ell+1-\ell t}{\ell-(\ell-1)t}.
\end{equation*}
Finally,
\begin{equation}\label{eq:uell-log}
 0<-\frac{u_\ell'(t)}{u_\ell(t)}
 =\frac{1}{(\ell-(\ell-1)t)(\ell+1-\ell t)}
 \leq\frac23
 \qquad(0<t\leq1/2).
\end{equation}

\begin{lem}\label{lem:residual-sign}
Fix \(k\geq3\) and \(0<t\leq1/2\), put
\(\alpha=\psi_k(t)\) and \(\beta=\lambda_k(\alpha)^{-k}\), and use the
variables in~\eqref{eq:normalized-incidence}.  Assume that these
variables determine an incidence assignment on a \(k\)-uniform hypergraph
\(H\), and that every \(a(v,e)\) and every complementary
variable produced at a degree-two vertex is positive.  Suppose that all labeling
conditions are normal except possibly a collection of vertex and edge
conditions.  For a vertex condition write
\[
        R_v=S_v-U_v,
\]
where \(S_v=\frac{1-d_vy}{t(1-y)}\) and \(U_v=\sum_{e\ni v}a(v,e)\).  For
an edge condition with \(s\) non-leaf variables \(a_1,\ldots,a_s\), write
\[
        R_e=t^{s-2}\prod_{i=1}^sa_i-1.
\]
We call $R_v$ and $R_e$ the residuals.
\begin{enumerate}[(1)]
\item If all residuals are nonnegative and at least one is positive, the
induced \((\alpha,\beta)\)-labeling is strictly subnormal.
\item If all residuals are nonpositive and at least one is negative, the
induced \((\alpha,\beta)\)-labeling is strictly supernormal.  If it is
consistent, in particular if \(H\) is a hypertree, it is consistently strictly
supernormal.
\end{enumerate}
\end{lem}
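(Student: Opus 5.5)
The plan is a direct translation of the residual signs into the defining inequalities of subnormal and supernormal labelings. For each vertex and each edge we rewrite the labeling condition in terms of the variables \(a(v,e)\) in~\eqref{eq:normalized-incidence}, and show that it reduces, up to a positive factor, to \(R_v\) or \(R_e\).

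\emph{Vertex conditions.} Take a vertex \(v\) of degree \(d_v\). We sum~\eqref{eq:normalized-incidence} over the incident edges, using the form \(y+t(1-y)a\) with \(a=1/(t(1-y))\cdot(1-y)=\ldots\). It is cleanest to note that a leaf incidence \(B=1\) corresponds to \(a=(1-y)/(t(1-y))=1/t\), which is consistent with \(S_v\) for \(d_v=1\). This gives
\[
 \sum_{e\ni v}B(v,e)=d_vy+t(1-y)U_v .
\]
Therefore
\[
 1-\sum_{e\ni v}B(v,e)=t(1-y)\,(S_v-U_v)=t(1-y)R_v .
\]
Since \(0<y<1\) by~\eqref{2alpha} and \(t>0\), the sign of \(1-\sum_{e\ni v}B(v,e)\) is exactly the sign of \(R_v\). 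A normal vertex has \(R_v=0\), and leaf vertices are automatically normal.

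\emph{Edge conditions.} By the computation preceding the lemma, an edge with \(s\) non-leaf incidences has edge product
\[
 \lambda^{-k}Q^kt^s\prod_i a_i=\beta\,t^{s-2}\prod_i a_i=\beta(1+R_e),
\]
where we use \(Q^k=t^{-2}\) from~\eqref{eq:cycle}. Hence the edge product minus \(\beta\) equals \(\beta R_e\), which has the sign of \(R_e\).

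\emph{Admissibility.} We must also check the standing requirement \(B(v,e)>\alpha/\rho\), with \(\rho=\beta^{-1/k}=\lambda\), so that \(\alpha/\rho=y\). Because \(B-y=t(1-y)a\), this holds exactly when \(a>0\). This is precisely what the hypothesis gives, including the complementary variables produced at degree-two vertices, and leaf incidences have \(B=1>y\).

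\emph{Conclusion.} Now combine the two sign statements. If every residual is nonnegative, then each vertex satisfies \(\sum B\le1\) and each edge product is \(\ge\beta\), so the labeling is subnormal. If some residual is positive, the corresponding inequality is strict, giving (1). Reversing all signs gives the strictly supernormal case in (2). Consistency is automatic for hypertrees, as remarked after the definition of a consistent labeling, which yields the last clause.

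The only point that needs care is bookkeeping, not a genuine obstacle: confirming that the normalization \(Q^k=t^{-2}\) (equivalently~\eqref{eq:psi}) absorbs the leaf factors, so that the edge identity holds uniformly in \(k\).
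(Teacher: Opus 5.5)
Your proposal is correct and follows the paper's proof, which is this same computation in condensed form: the vertex sum becomes $d_vy+t(1-y)U_v$, so $1-\sum_{e\ni v}B(v,e)=t(1-y)R_v$, and the edge product becomes $\beta t^{s-2}\prod_i a_i=\beta(1+R_e)$ via $Q^k=t^{-2}$, with consistency automatic on hypertrees. Your explicit check that $B(v,e)>\alpha/\lambda$ is equivalent to $a(v,e)>0$ is a useful addition; the only slip is writing $0<y<1$, since $y=0$ at $t=1/2$, but the argument only needs $1-y>0$, which still holds.
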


\begin{proof}
At a vertex, substituting~\eqref{eq:normalized-incidence} shows that the sign
of \(U_v-S_v\) is the sign of \(\sum_{e\ni v}B(v,e)-1\).  At an edge,
\eqref{eq:edge-normal-a} is exactly the normal edge equation, and the
subnormal edge inequality is the direction in which the product of the \(a_i\)
is larger.  Thus nonnegative residuals give all subnormal inequalities, and
nonpositive residuals give all supernormal inequalities; a strict residual
gives strictness.  Consistency is inherited from the assignment, and is
automatic on a hypertree.
\end{proof}

\begin{lem}\label{lem:propagation-domain}
Let \(f(x)=1/(2-x)\).
\begin{enumerate}[(1)]
\item For an internal path from a branching edge to a branching vertex with
parameter \(q\), the value received by the
branching vertex is \(f^q(x)\).  If \(q=0\), the only requirement is
\(x>0\).  If \(q\geq1\), all intermediate incidences on the internal path
and their
degree-two complements are positive exactly on
\[
        0<x<\eta_q:=1+\frac1q.
\]
On this domain one has \(0<f^j(x)<2\) for \(0\leq j<q\) and
\(f^q(x)>0\).  The terminal value \(f^q(x)\) enters a branching vertex sum,
so it is not required to be less than \(2\).
\item For an internal path between two branching edges, the terminal complement
at the other branching edge is \(2-f^q(x)\).  The positivity domain for the propagated incidence variables is
\[
        0<x<\vartheta_q:=\frac{q+2}{q+1}.
\]
Equivalently,
\[
        0<f^j(x)<2\qquad(0\leq j\leq q),
        \qquad 2-f^q(x)>0.
\]
\item Suppose that the initial value \(x_N>0\) making the initial branching edge
normal lies outside the corresponding propagation domain.  For an edge-to-vertex path with
\(q\geq1\), choose \(0<z_0<\eta_q\) with \(z_0<x_N\), for a path between
two branching edges, choose \(0<z_0<\vartheta_q\) with \(z_0<x_N\).
Replacing \(x_N\) by \(z_0\) makes the initial-edge product strict in the
supernormal direction and
preserves all internal positivity conditions.  Moreover,
\[
 f^q(z_0)\longrightarrow+\infty
 \quad(z_0\uparrow\eta_q),\qquad
 2-f^q(z_0)\longrightarrow0
 \quad(z_0\uparrow\vartheta_q),
\]
in the edge-to-vertex and edge-to-edge cases, respectively.
\end{enumerate}
\end{lem}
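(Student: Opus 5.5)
The plan is to reduce everything to the closed form of the iterates of the Möbius map \(f(x)=1/(2-x)\) already recorded before~\eqref{eq:f-derivative}:
\[
 f^m(x)=\frac{m-(m-1)x}{m+1-mx},
\]
valid on the set where all earlier iterates are defined. Call \(N_m(x)=m-(m-1)x\) and \(D_m(x)=m+1-mx\) the numerator and denominator. Note that \(D_m=N_{m+1}\). Hence
\[
 2-f^m(x)=\frac{D_{m+1}(x)}{D_m(x)}=\frac{N_{m+2}(x)}{N_{m+1}(x)},
\]
which is the same identity as \(f^{m+1}=1/(2-f^m)\). The zero of \(D_m\) is \(x=(m+1)/m\), and these zeros decrease in \(m\). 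So on a range of the form \(0<x<(q+1)/q\), every \(D_j\) and \(N_j\) with \(j\leq q\) is positive. Here one uses \((q+1)/q\leq j/(j-1)\) for \(1\le j\le q\).

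For part (1), I first fix the bookkeeping. Along an edge-to-vertex internal path with parameter \(q\), the propagated incidences are \(f^0(x),\dots,f^{q-1}(x)\). Their degree-two complements are \(2-f^j(x)=1/f^{j+1}(x)\). The value handed to the branching vertex is \(f^q(x)\). The case \(q=0\) is immediate, since only \(x\) itself appears.

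For \(q\geq1\), suppose \(0<x<\eta_q=(q+1)/q\). Then \(N_j,D_j>0\) for all \(j\le q\), so \(f^j(x)>0\) for \(j\le q\). The identity \(2-f^j=1/f^{j+1}\) then gives \(0<f^j<2\) for \(j<q\). For the converse, assume all listed quantities are positive and argue by induction on \(j\). If \(f^0,\dots,f^{j}\) are positive and \(f^0,\dots,f^{j-1}<2\), then \(D_j(x)>0\), i.e. \(x<(j+1)/j\). Taking \(j=q\) forces \(x<\eta_q\).

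Part (2) is the same argument with one more step. The condition is now \(0<f^j<2\) for all \(j\le q\), and the last of these, \(2-f^q>0\), is equivalent to \(f^{q+1}>0\). This gives exactly \(x<(q+2)/(q+1)=\vartheta_q\).

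For part (3), I use that on the initial branching edge the product of the non-leaf variables is increasing in the variable \(x\) placed on the internal path, all other variables being fixed. Replacing \(x_N\) by \(z_0<x_N\) therefore gives \(t^{s-2}\prod a_i<1\). This is a negative residual \(R_e\), which is the supernormal direction of Lemma~\ref{lem:residual-sign}. By parts (1)–(2), \(z_0\) lies in the propagation domain, so all internal positivity conditions hold, and the internal conditions remain normal by construction.

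The limits follow from the closed forms. As \(z_0\uparrow\eta_q\), the denominator \(D_q(z_0)\to0^+\), while \(N_q(z_0)\to q-(q-1)(q+1)/q=1/q>0\); hence \(f^q(z_0)\to+\infty\). As \(z_0\uparrow\vartheta_q\), we have \(2-f^q(z_0)=D_{q+1}(z_0)/D_q(z_0)\). Here \(D_{q+1}(z_0)\to0^+\) and \(D_q(\vartheta_q)=1/(q+1)>0\), so \(2-f^q(z_0)\to0\).

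The main obstacle is not the algebra but stating precisely which incidences and complements the positivity requirement ranges over, especially the asymmetry between the vertex terminal (no upper bound \(2\)) and the edge terminal (the complement must be positive). The converse direction of (1) and (2) relies on this ordered induction, which prevents a later iterate from becoming spuriously positive after an earlier denominator has changed sign.
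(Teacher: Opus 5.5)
Your proposal is correct and follows essentially the same route as the paper: the closed form $f^m(x)=\frac{m-(m-1)x}{m+1-mx}$, locating the first sign change among the numerators and denominators at $x=(q+1)/q$ (resp.\ $(q+2)/(q+1)$), the factor $z_0/x_N<1$ on the initial edge, and the limits read off from the vanishing denominator of $f^q$ and numerator of $2-f^q$. Your ordered induction for the converse, and your reduction of part (2) to part (1) with $q+1$ in place of $q$, only make explicit what the paper compresses into ``the first boundary among these quantities.''
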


\begin{proof}
Use
\[
        f^m(x)=\frac{m-(m-1)x}{m+1-mx}.
\]
For \(q=0\) in (1) there is no intermediate degree-two vertex and the only
positivity condition is \(x=f^0(x)>0\).  Now suppose \(q\geq1\).  For the
edge-to-vertex internal path, the intermediate degree-two vertices are
those with
\(j<q\).  For \(0\leq j<q\), positivity of \(f^j(x)\) and of its complement
\(2-f^j(x)\) is controlled by the numerator \(j-(j-1)x\), the denominator
\(j+1-jx\), and
\[
        2-f^j(x)=\frac{j+2-(j+1)x}{j+1-jx}.
\]
The first boundary among these quantities, as \(j\) ranges from \(0\) to
\(q-1\), is \(x=(q+1)/q=1+1/q\).  At the same boundary the denominator of
\(f^q(x)\) vanishes from the positive side, while the terminal value itself
is used only at the branching vertex.  This proves (1).  For two branching
edges, the additional terminal condition is
\[
        2-f^q(x)=\frac{q+2-(q+1)x}{q+1-qx}>0,
\]
which gives \(x<(q+2)/(q+1)\).  This boundary is no larger than all earlier
positivity boundaries, and within this interval the preceding formula gives
\(0<f^j(x)<2\) for \(0\leq j\leq q\).  This proves (2).

For (3), replacing \(x_N\) by \(z_0<x_N\) multiplies the
product at the initial edge by \(z_0/x_N<1\); hence the initial-edge product
becomes strict in the supernormal direction.  Parts (1) and (2) give positivity of all
internal incidences and degree-two complements.  Finally, the denominator of
\(f^q(z_0)\) tends to zero from above as
\(z_0\uparrow\eta_q\), whereas the numerator of
\(2-f^q(z_0)\) tends to zero from above as
\(z_0\uparrow\vartheta_q\).  These are the asserted limits.
\end{proof}

\begin{lem}\label{lem:endpoint-exclusion}
Let \(H\) be a connected \(k\)-uniform hypertree with
\(\rho_0(H)<4^{1/k}\).  At \(t=1/2\), equivalently \(\alpha=0\), no positive
assignment of the variables in~\eqref{eq:normalized-incidence} can induce either a normal labeling or a consistently
strictly supernormal labeling with $\beta=1/4$.
Moreover, for any such assignment, if all conditions except
one residual in the convention of Lemma~\ref{lem:residual-sign} are normal,
then the remaining residual is strictly positive.
\end{lem}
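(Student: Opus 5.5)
The plan is to argue by contradiction using Lemma~\ref{lem:label-comparison}, after specializing the normalized variables to the endpoint \(t=1/2\). At \(t=1/2\) we have \(\alpha=\psi_k(1/2)=0\), so \(\lambda=\lambda_k(0)=4^{1/k}\), \(\beta=\lambda^{-k}=1/4\) and \(y=\alpha/\lambda=0\). Then \eqref{eq:normalized-incidence} reduces to \(B(v,e)=\tfrac12 a(v,e)\) at non-leaf incidences and \(B(v,e)=1\) at leaf incidences. Hence any positive assignment of the variables gives a weighted incidence matrix satisfying the standing requirement \(B(v,e)>\alpha/\rho=0\). Moreover, since \(H\) is a hypertree, any induced labeling is automatically consistent.

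\emph{First assertion.} Suppose such an assignment induces a normal labeling. Since the labeling is consistent, Lemma~\ref{lem:label-comparison}(3) gives \(\rho_0(H)=\beta^{-1/k}=4^{1/k}\), which contradicts \(\rho_0(H)<4^{1/k}\). Now suppose instead that it induces a consistently strictly supernormal labeling with \(\beta=1/4\). Then Lemma~\ref{lem:label-comparison}(2) gives \(\rho_0(H)>4^{1/k}\), again a contradiction. This settles the first assertion. The only point to check is that the identification between normality of the \(a\)-variables and normality of the \((\alpha,\beta)\)-labeling, as derived before Lemma~\ref{lem:residual-sign}, remains valid at \(\alpha=0\). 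It does, because \(Q^k=t^{-2}=4\) there.

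\emph{Moreover part.} Take an assignment in which every condition is normal except one vertex or edge condition, with residual \(R\) in the convention of Lemma~\ref{lem:residual-sign}. I would split into three cases according to the sign of \(R\).
\begin{itemize}
\item If \(R=0\), every condition is normal. The induced labeling is then a consistent normal labeling, which the first part excludes.
\item If \(R<0\), all residuals are nonpositive and one of them is negative. Lemma~\ref{lem:residual-sign}(2), applied with \(t=1/2\), then yields a strictly supernormal labeling. It is consistent because \(H\) is a hypertree, so this case is also excluded by the first part.
\item Hence \(R>0\). In that case Lemma~\ref{lem:residual-sign}(1) shows the labeling is strictly subnormal, which is consistent with \(\rho_0(H)<4^{1/k}\) via Lemma~\ref{lem:label-comparison}(1).
\end{itemize}

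The main obstacle is not conceptual but a hypothesis check. Lemma~\ref{lem:residual-sign} is stated for \(0<t\leq 1/2\), and it requires positivity of every \(a(v,e)\) and of every complementary value produced at a degree-two vertex. I will therefore state explicitly that "positive assignment" includes these complements, so that Lemma~\ref{lem:residual-sign} applies verbatim at the endpoint \(t=1/2\).
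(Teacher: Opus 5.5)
Your proposal is correct and follows the paper's proof essentially verbatim. You specialize to \(t=1/2\) (so \(\alpha=0\), \(\beta=1/4\)), use Lemma~\ref{lem:label-comparison}(2),(3) together with automatic consistency on a hypertree to exclude normal and consistently strictly supernormal labelings, and then exclude a zero or negative remaining residual via Lemma~\ref{lem:residual-sign}. Your added checks (\(y=0\), \(Q^k=t^{-2}=4\), positivity of the degree-two complements) are sound and only make explicit what the paper leaves implicit.
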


\begin{proof}
At \(\alpha=0\), the adjacency spectral radius of loose cycle is \(4^{1/k}\), so
\(\beta=1/4\).  A normal labeling induced by such an assignment would give
\(\rho_0(H)=4^{1/k}\), and a consistently strictly supernormal labeling
would give \(\rho_0(H)>4^{1/k}\), by
Lemma~\ref{lem:label-comparison}.  Both contradict the hypothesis.
For the final assertion, a zero remaining residual would give a normal
labeling, while a negative remaining residual would give a consistently
strictly supernormal labeling by Lemma~\ref{lem:residual-sign}, since \(H\)
is a hypertree.  Hence the remaining residual must be positive.
\end{proof}

Fix \(k\geq3\).  In the formulas below, the displayed pendent-path lengths and path
parameters are held fixed and are suppressed from the function notation.

For \(E_{a,b,c}^{k}\), define
\begin{equation}\label{eq:HE}
 H_E(t)=4-\frac1t-f^{a-1}(t)-f^{b-1}(t)-f^{c-1}(t).
\end{equation}
For \(BD_n^{k}\), which has one branching vertex and one branching edge,
put \(q=n-5\geq0\) and define
\begin{equation}\label{eq:c-BD}
 c(t)=\frac{1}{t(2-t)^2}.
\end{equation}
Its branching-vertex residual is
\begin{equation}\label{eq:HBD}
 H_{BD,q}(t)=4-\frac1t-2t-f^q(c(t)).
\end{equation}
Its propagation domain is \(J_{BD}=(0,1/2]\) when \(q=0\).  When
\(q\geq1\), it is the interval on which
\begin{equation*}
        c(t)<1+\frac1q.
\end{equation*}

For a hypergraph formed from one branching edge by attaching \(s\geq3\)
pendent loose paths of lengths \(\ell_1,\ldots,\ell_s\) at distinct vertices
of that edge, define
\begin{equation}\label{eq:Phi}
        \Phi(t)=t^{s-2}\prod_{i=1}^s u_{\ell_i}(t).
\end{equation}
For \(G_{a,b:q:c,d}^{k}\), define
\begin{equation}\label{eq:LR-two-branch}
 L_G(t)=t u_a(t)u_b(t),
 \qquad
 R_G(t)=t u_c(t)u_d(t),
 \qquad z(t)=\frac1{R_G(t)}.
\end{equation}
The value propagated to the other branching edge is
\begin{equation}\label{eq:w-two-branch}
 w(t)=2-f^q(z(t)).
\end{equation}
Equivalently,
\begin{equation*}
 2-f^q(z)=\frac{q+2-(q+1)z}{q+1-qz}.
\end{equation*}
By Lemma~\ref{lem:propagation-domain}(2), its propagation domain is
\begin{equation*}
 J_G=\{t\in(0,1/2]:z(t)<\vartheta_q\},
 \qquad \vartheta_q=\frac{q+2}{q+1}.
\end{equation*}
On \(J_G\), define
\begin{equation}\label{eq:Psi}
        \Psi(t)=L_G(t)\bigl(2-f^q(z(t))\bigr).
\end{equation}

\begin{pop}\label{prop:branching-residuals}
Fix \(k\geq3\). For
\(0<t\leq1/2\), write
\[
        \alpha_t=\psi_k(t),\qquad
        \beta_t=\lambda_k(\alpha_t)^{-k}.
\]
The functions above have the following properties on their stated
\(t\)-domains.
\begin{enumerate}[(1)]
\item The function \(H_E\) is strictly increasing and tends to
\(-\infty\) as \(t\downarrow0\).  Hence \(H_E(1/2)\leq0\) implies
\(H_E(t)<0\) for every \(0<t<1/2\).
\item For every \(s\geq3\), the function \(\Phi\) is strictly increasing
and tends to \(0\) as \(t\downarrow0\).  Hence \(\Phi(1/2)\leq1\) implies
\(\Phi(t)<1\) for every \(0<t<1/2\).
\item The functions \(L_G\) and \(R_G\) are strictly increasing and
\(z=1/R_G\)
is strictly decreasing.  If \(J_G\) is nonempty, then
\(J_G=(\tau,1/2]\) for a unique \(\tau\in(0,1/2)\), and \(\Psi\) is
strictly increasing on \(J_G\), with \(\Psi(t)\to0\) as
\(t\downarrow\tau\).  For \(t\notin J_G\), the modified propagation assignment in
Lemma~\ref{lem:propagation-domain}(3) gives a consistently strictly
\((\alpha_t,\beta_t)\)-supernormal labeling.  Consequently,
if \(z(1/2)<\vartheta_q\) and \(\Psi(1/2)\leq1\), then
\(G_{a,b:q:c,d}^{k}\) admits the corresponding
\((\alpha_t,\beta_t)\)-labeling for every \(0<t<1/2\).
\item The domain \(J_{BD}\) is \((0,1/2]\) for \(q=0\), and is
\((\tau,1/2]\) for a unique \(\tau\in(0,1/2)\) when \(q\geq1\).
On this domain, \(H_{BD,q}\) is strictly increasing,
\begin{equation}\label{eq:HBD-half}
        H_{BD,q}(1/2)=\frac1{q+9}>0,
\end{equation}
and \(H_{BD,q}(t)\to-\infty\) as \(t\) approaches the left endpoint of
\(J_{BD}\) from the right. For
\(t\notin J_{BD}\), which can occur only when \(q\geq1\), the modified
propagation assignment in Lemma~\ref{lem:propagation-domain}(3) gives a consistently
strictly \((\alpha_t,\beta_t)\)-supernormal labeling.
\end{enumerate}
\end{pop}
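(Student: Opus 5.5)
The proof splits into the four items, and each follows from explicit formulas plus monotonicity facts already recorded before the statement.

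For (1), I will show that every term of \(H_E\) is increasing in \(t\). The term \(-1/t\) is strictly increasing. For each pendent length \(\ell\), \(f^{\ell-1}(t)=\frac{(\ell-1)-(\ell-2)t}{\ell-(\ell-1)t}\) has derivative \(1/(\ell-(\ell-1)t)^2>0\) by \eqref{eq:f-derivative}. So each \(-f^{\ell-1}\) is decreasing, and a direct comparison is needed. I will use the bound \((f^m)'\le 1\) on \([0,1/2]\) from \eqref{eq:f-derivative}, together with \((1/t)'=1/t^2\ge4\). Then
\[
H_E'(t)\geq \frac1{t^2}-3\geq 1>0 .
\]
As \(t\downarrow0\), the values \(f^{m}(t)\to m/(m+1)\) stay bounded while \(-1/t\to-\infty\). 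The consequence follows by monotonicity.

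For (2), write \(\log\Phi=(s-2)\log t+\sum_i\log u_{\ell_i}(t)\). Its derivative is at least \((s-2)/t-\frac23 s\) by \eqref{eq:uell-log}. On \((0,1/2]\) this is at least \(2(s-2)-\frac23 s=\frac43 s-4>0\) for \(s\ge3\). The limit is \(0\) because \(s-2\ge1\) and the \(u_\ell\) stay bounded, tending to \((\ell+1)/\ell\).

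For (3), the same estimate with \(s=3\) applied to \(tu_au_b\) gives \((\log L_G)'\ge 1/t-4/3\ge 2/3>0\). The same holds for \(R_G\), so \(z=1/R_G\) is strictly decreasing. Since \(z\to\infty\) as \(t\downarrow0\), the set \(\{z<\vartheta_q\}\) is an interval \((\tau,1/2]\) whenever it is nonempty, with \(z(\tau)=\vartheta_q\). On \(J_G\), the map \(x\mapsto2-f^q(x)\) is decreasing in \(x\), since \((f^q)'>0\) below the pole. Hence \(2-f^q(z(t))\) is increasing and positive, and \(\Psi\) is a product of positive increasing functions. As \(t\downarrow\tau\) we have \(2-f^q(z)\to0\) by Lemma~\ref{lem:propagation-domain}(3), and \(L_G\) stays bounded, so \(\Psi\to0\).

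For \(t\notin J_G\), I would apply Lemma~\ref{lem:propagation-domain}(3). Choose \(z_0<\vartheta_q\) close enough to \(\vartheta_q\) that \(L_G(2-f^q(z_0))<1\). The right edge becomes strictly supernormal, the left edge is strictly supernormal, and everything else is normal. Lemma~\ref{lem:residual-sign}(2) then gives a consistently strictly supernormal labeling, with consistency automatic because the hypergraph is a hypertree. For the final claim, \(z(1/2)<\vartheta_q\) gives \(J_G\ne\emptyset\). For \(t\in J_G\) with \(t<1/2\), monotonicity gives \(\Psi(t)<\Psi(1/2)\le1\), so the residual at the left edge is negative, which is the supernormal direction. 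For \(t\notin J_G\) the modified assignment just described applies.

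For (4), first note that \(c(t)=1/(t(2-t)^2)\) is strictly decreasing on \((0,1/2]\), since \(t(2-t)^2\) is increasing there. Hence the set \(\{c<1+1/q\}\) is an interval \((\tau,1/2]\). It is nonempty because \(c(1/2)=16/9<2\le1+1/q\) fails only when \(q\ge2\); in that case I still need \(16/9<1+1/q\), which requires \(q\le1\). This is the step I expect to be the main obstacle. So I will check directly whether \(J_{BD}\) is nonempty for all \(q\), or whether \(c\) should instead be read as the normalized value after propagation. The intended identity \(H_{BD,q}(1/2)=1/(q+9)\) should be computed directly from \(f^q\) to confirm the convention.

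Once the convention is fixed, monotonicity follows the same pattern as before. The terms \(-1/t\) and \(-2t\) have total derivative at least \(4-2>0\). The term \(-f^q(c(t))\) is increasing because \(f^q\) is increasing on the domain and \(c\) is decreasing. At the left endpoint \(c\to\eta_q\), and \(f^q\to+\infty\) by Lemma~\ref{lem:propagation-domain}(3); when \(q=0\) the term \(-1/t\) drives \(H_{BD,q}\to-\infty\) instead. Outside the domain, I would choose \(z_0\) with \(f^q(z_0)\) large enough that the vertex residual becomes negative. Together with the strict initial-edge inequality, this gives a consistently strictly supernormal labeling by Lemma~\ref{lem:residual-sign}(2).
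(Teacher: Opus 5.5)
Your proof of (1)--(3), and of the monotonicity, left-endpoint limit and off-domain construction in (4), follows the paper's argument essentially step by step.

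\textbf{The gap is in (4)}, and it comes from an arithmetic slip. At $t=1/2$ one has $t(2-t)^2=\tfrac12\cdot\tfrac94=\tfrac98$, so $c(1/2)=8/9$, not $16/9$; you apparently evaluated $1/(t(2-t))^2$ instead. With your value, $J_{BD}$ would be empty for every $q\ge2$, which contradicts the statement. As a result you leave two claims unproven:
\begin{enumerate}[(a)]
\item the description $J_{BD}=(\tau,1/2]$ with $\tau\in(0,1/2)$ for $q\ge1$;
\item the identity \eqref{eq:HBD-half}.
\end{enumerate}
You also suggest reinterpreting $c$, which is not needed.

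With the correct value the argument closes directly.
\begin{enumerate}[(a)]
\item Since $8/9<1<1+1/q$ for every $q\ge1$, we have $1/2\in J_{BD}$. Because $c$ is continuous, strictly decreasing, and $c(t)\to\infty$ as $t\downarrow0$, it follows that $J_{BD}=(\tau,1/2]$ for a unique $\tau\in(0,1/2)$.
\item Compute $f^q(8/9)=\frac{q-(q-1)\cdot 8/9}{q+1-8q/9}=\frac{q+8}{q+9}$. Hence $H_{BD,q}(1/2)=4-2-1-\frac{q+8}{q+9}=\frac1{q+9}$, which is \eqref{eq:HBD-half}.
\end{enumerate}

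\textbf{A smaller slip in (2).} For $s=3$ your lower bound $2(s-2)-\frac23 s=\frac43 s-4$ equals $0$, not a positive number. Strict monotonicity still holds, because $(s-2)/t>2(s-2)$ strictly on $(0,1/2)$. For $s=3$ the bound $(s-2)/t-\frac{2s}{3}$ reads $1/t-2$, which is positive on $(0,1/2)$; this is how the paper treats that case.
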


\begin{proof}
For (1),~\eqref{eq:f-derivative} gives
\[
 H_E'(t)=\frac1{t^2}-
 \sum_{\ell\in\{a,b,c\}}(f^{\ell-1})'(t)\geq4-3=1>0.
\]
The term \(-1/t\) gives the asserted limit.

For (2),~\eqref{eq:uell-log} gives
\[
 \frac{d}{dt}\log\Phi(t)
 \geq \frac{s-2}{t}-\frac{2s}{3}>0
 \qquad(0<t<1/2).
\]
Indeed, for \(s=3\) the right-hand side is \(1/t-2>0\); for
\(s\geq4\) it is at least \(2(s-2)-2s/3>0\).  The factor \(t^{s-2}\)
gives \(\Phi(t)\to0\).

For (3), the same logarithmic derivative estimate shows that \(L_G\) and
\(R_G\) are strictly increasing.  Hence \(z=1/R_G\) is strictly decreasing and
tends to infinity as \(t\downarrow0\).  If \(J_G\) is nonempty, it therefore
has the stated form.  On \(J_G\), \(f^q\) is strictly increasing, so
\(2-f^q(z(t))\), and hence \(\Psi(t)\), is strictly increasing.
Formula~\eqref{eq:w-two-branch} gives the limit at \(\tau\).

If \(t\notin J_G\), choose \(0<z_0<\vartheta_q\), with \(z_0<z(t)\) and
sufficiently close to \(\vartheta_q\).  Then
\[
 R_G(t)z_0<1,
 \qquad
 L_G(t)\bigl(2-f^q(z_0)\bigr)<1.
\]
\makebox[\linewidth][l]{The two branching-edge products are therefore strict in the supernormal direction.}
Lemma~\ref{lem:propagation-domain}(3) supplies positive internal
incidences satisfying the normality equations; the hypergraph is a hypertree, so the
labeling is consistent.  The final assertion follows from strict
monotonicity on \(J_G\) and this modified propagation assignment outside \(J_G\).

For (4), the function \(c(t)=1/[t(2-t)^2]\) is strictly decreasing, tends
to infinity as \(t\downarrow0\), and satisfies \(c(1/2)=8/9\).  This proves
the assertion about \(J_{BD}\).  On that domain, \(c'(t)<0\) and
\((f^q)'(x)>0\), whence
\[
        H_{BD,q}'(t)>\frac1{t^2}-2\geq2.
\]
Moreover,
\[
 f^q(8/9)=\frac{q+8}{q+9},
\]
which proves~\eqref{eq:HBD-half}.  When \(q=0\), the term \(-1/t-c(t)\)
gives the left-boundary limit.  When \(q\geq1\), the denominator of
\(f^q(c(t))\) tends to zero from above at \(\tau\), giving the same limit.

Finally, if \(t\notin J_{BD}\), choose \(0<z_0<1+1/q\), with
\(z_0<c(t)\) and sufficiently close to \(1+1/q\).  The branching-edge
product is multiplied by \(z_0/c(t)<1\), while \(f^q(z_0)\) can be made
arbitrarily large.  Thus both the initial-edge product and the terminal
branching-vertex inequality are strict in the supernormal direction.  Positivity and consistency
follow as in part (3).
\end{proof}

Table~\ref{tab:equality-point-equations} lists the correspondence between the
hypergraphs in Proposition~\ref{prop:luman-inputs}(2) and the determining
equations defined above. Proposition~\ref{prop:branching-residuals} supplies
their monotonicity and boundary behavior.

\begin{table}[!htbp]
\centering
\footnotesize
\caption{Equations determining \(\alpha_1\) for the hypergraphs in
Proposition~\ref{prop:luman-inputs}(2)}
\label{tab:equality-point-equations}
\begin{tabularx}{\textwidth}{@{}>{\raggedright\arraybackslash}p{0.27\textwidth}
>{\raggedright\arraybackslash}p{0.33\textwidth}X@{}}
\toprule
hypergraph & hypergraph family and parameters & determining equation \\
\midrule
\(D_n=E_{1,1,n-2}\),
\(E_6=E_{1,2,2}\),
\(E_7=E_{1,2,3}\),
\(E_8=E_{1,2,4}\) &
\(E_{a,b,c}\) with the listed pendent-path lengths &
\(H_E(t)=0\), defined in~\eqref{eq:HE} \\
\addlinespace[1mm]
\(D'_n=F_{1,1,n-3}\),
\(B_n=F_{1,2,n-4}\), and the remaining listed \(F\)-family hypergraphs &
\(F_{\ell_1,\ell_2,\ell_3}\), so \(s=3\) &
\(\Phi(t)=1\), defined in~\eqref{eq:Phi} \\
\addlinespace[1mm]
\(H_{1,1,1,\ell}\), \(1\leq\ell\leq4\) &
\(H_{\ell_1,\ell_2,\ell_3,\ell_4}\), so \(s=4\) &
\(\Phi(t)=1\), defined in~\eqref{eq:Phi} \\
\addlinespace[1mm]
\(B'_n=G_{1,1:n-6:1,1}\),
\(\bar B_n=G_{1,1:n-7:1,2}\),
\(G_{1,1:q:1,3}\) \((0\leq q\leq5)\) &
\(G_{a,b:q:c,d}\) with the listed pendent-path lengths and internal path parameter &
\(\Psi(t)=1\), defined in~\eqref{eq:Psi} \\
\addlinespace[1mm]
\(BD_n\ (n\geq5)\) &
\(BD_n\), with one branching vertex and one branching edge; internal path parameter
\(q=n-5\) &
\(H_{BD,q}(t)=0\), defined in~\eqref{eq:HBD} \\
\bottomrule
\end{tabularx}
\end{table}

\begin{thm}\label{thm:unique-equality-point}
Let \(H\) be a hypergraph in the list in
Proposition~\ref{prop:luman-inputs}(2), other than a loose path. Then there is
a unique \(\alpha_1\in(0,1)\) such that
\[
\begin{aligned}
 &\rho_\alpha(H)-\lambda_k(\alpha)<0 &&(0\leq\alpha<\alpha_1),\\
 &\rho_{\alpha_1}(H)-\lambda_k(\alpha_1)=0,&\\
 &\rho_\alpha(H)-\lambda_k(\alpha)>0 &&(\alpha_1<\alpha<1).
\end{aligned}
\]
Here \(\alpha_1=\psi_k(t_1)\), where \(t_1\) is the unique solution, in the
corresponding propagation domain, of the equation assigned to \(H\) in
Table~\ref{tab:equality-point-equations}.
\end{thm}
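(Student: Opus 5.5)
The plan is to translate the sign of \(\rho_\alpha(H)-\lambda_k(\alpha)\) into the sign of a single scalar residual, using the reparametrization \(\alpha=\psi_k(t)\). Since \(\psi_k\) is a strictly decreasing bijection from \((0,1/2]\) onto \([0,1)\), it suffices to find a unique \(t_1\in(0,1/2)\) such that
\[
\rho_{\psi_k(t)}(H)<\lambda_k(\psi_k(t))\ \text{for } t_1<t\leq 1/2,\qquad
\rho_{\psi_k(t)}(H)=\lambda_k(\psi_k(t))\ \text{at } t=t_1,\qquad
\rho_{\psi_k(t)}(H)>\lambda_k(\psi_k(t))\ \text{for } 0<t<t_1.
\]
Every non-path member of Proposition~\ref{prop:luman-inputs}(2) is a hypertree, as can be read off from the list, and has at most two branching objects. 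I will treat each family in Table~\ref{tab:equality-point-equations} separately, but with the same scheme.

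\textbf{Step 1 (the labeling).} For fixed \(t\), build the assignment \eqref{eq:normalized-incidence} by propagating from the leaf ends.
\begin{itemize}
\item Pendent paths contribute \(f^{\ell-1}(t)\) at a branching vertex and \(u_\ell(t)\) at a branching edge.
\item For \(G\) and \(BD\), choose the variable at the first branching edge so that this edge is normal (the value \(z(t)\), respectively \(c(t)\)), and push it through the internal path by \(f^q\).
\end{itemize}
All conditions are then normal except a single one. Its residual, in the sense of Lemma~\ref{lem:residual-sign}, is:
\begin{itemize}
\item \(H_E\) at the branching vertex for the \(E\)-family;
\item \(\Phi-1\) at the branching edge for the \(F\)- and \(H\)-families;
\item \(\Psi-1\) at the second branching edge for \(G\);
\item \(H_{BD,q}\) at the branching vertex for \(BD\).
\end{itemize}
The signs must be matched with the conventions of Lemma~\ref{lem:residual-sign}. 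For example, for \(E\) one has \(S_v=4-1/t\) at a degree-three vertex, so \(R_v=H_E\). The assignment is valid on the propagation domain by Lemma~\ref{lem:propagation-domain}. By Lemma~\ref{lem:residual-sign} and Lemma~\ref{lem:label-comparison}, a positive residual gives \(\rho_\alpha<\lambda_k(\alpha)\), a zero residual gives equality, and a negative residual gives \(\rho_\alpha>\lambda_k(\alpha)\). Consistency is automatic on a hypertree.

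\textbf{Step 2 (sign at the endpoint \(t=1/2\)).} Lemma~\ref{lem:endpoint-exclusion}, applied with \(\rho_0(H)<4^{1/k}\), shows the single residual is strictly positive at \(t=1/2\), whenever \(t=1/2\) lies in the propagation domain. For \(BD\) this is also explicit from \eqref{eq:HBD-half}. For \(G\), I must first check that \(z(1/2)<\vartheta_q\), which is needed to define \(\Psi(1/2)\). I plan to derive this from the same exclusion lemma. If \(z(1/2)\geq\vartheta_q\), the modified assignment of Lemma~\ref{lem:propagation-domain}(3) would give a consistently strictly supernormal labeling at \(\alpha=0\), which is impossible. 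Alternatively, it can be verified directly from the finitely many parameter shapes.

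\textbf{Step 3 (monotonicity and behaviour near the left end).} Proposition~\ref{prop:branching-residuals} gives the shape of each residual as a function of \(t\).
\begin{itemize}
\item \(H_E\) and \(H_{BD,q}\) are strictly increasing and tend to \(-\infty\) at the left end of their domain.
\item \(\Phi\) and \(\Psi\) are strictly increasing and tend to \(0\) there, so \(\Phi-1\) and \(\Psi-1\) become negative.
\end{itemize}
Combined with Step 2, the intermediate value theorem and strict monotonicity give a unique zero \(t_1\) in the interior of the domain. The residual is positive for \(t>t_1\) and negative for \(t<t_1\) inside the domain.

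\textbf{Step 4 (outside the domain).} For \(t\) outside the domain, which can happen only for \(G\) and for \(BD\) with \(q\geq1\), Proposition~\ref{prop:branching-residuals}(3),(4) supplies a consistently strictly supernormal labeling. This gives \(\rho_\alpha>\lambda_k(\alpha)\) there, and these \(t\) lie to the left of \(t_1\).

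Finally, set \(\alpha_1=\psi_k(t_1)\in(0,1)\). The three sign regimes of the theorem then follow from the monotone correspondence between \(t\) and \(\alpha\), and uniqueness of \(\alpha_1\) follows from the strict sign pattern.

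The main obstacle is Step 2 for the two-branching-edge family \(G\): checking that \(t=1/2\) lies in \(J_G\). After that, the delicate point is the bookkeeping of residual sign conventions across the vertex and edge cases; everything else is supplied by the preceding propositions.
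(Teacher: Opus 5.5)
Your proposal is correct and follows essentially the same route as the paper's proof. You set up the single-residual labeling in the \(t\)-parametrization and use Lemma~\ref{lem:endpoint-exclusion} for strict positivity at \(t=1/2\), including the same contradiction argument showing \(1/2\in J_G\); you then take monotonicity and left-endpoint behaviour from Proposition~\ref{prop:branching-residuals}, and use the modified supernormal assignments outside the propagation domain for the \(G\)-family and for \(BD_n\) with \(q\geq1\).
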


\begin{proof}
We use the parameter \(t\) from~\eqref{eq:psi}, write the incidence weights
in the form~\eqref{eq:normalized-incidence}, and impose the normality
equations at every edge and degree-two vertex not mentioned below.

Let \(H=E_{a,b,c}^{k}\).  All conditions except the branching-vertex
residual \(H_E\) are normal.  Lemma~\ref{lem:endpoint-exclusion} gives
\(H_E(1/2)>0\).  Proposition~\ref{prop:branching-residuals}(1) then gives a
unique \(t_1\in(0,1/2)\) with \(H_E(t_1)=0\).  By
Lemma~\ref{lem:residual-sign}, the labeling is strictly subnormal for
\(t>t_1\), normal at \(t=t_1\), and consistently strictly supernormal for
\(t<t_1\).

If \(H\) belongs to one of the listed \(F\)- or \(H\)-families, suppose that its branching edge
contains \(s\geq3\) non-leaf vertices and carries
pendent paths of lengths \(\ell_1,\ldots,\ell_s\).  The only residual is
\(\Phi(t)-1\).  Lemma~\ref{lem:endpoint-exclusion} gives
\(\Phi(1/2)>1\), and Proposition~\ref{prop:branching-residuals}(2) gives a
unique \(t_1\in(0,1/2)\) with \(\Phi(t_1)=1\).  Hence
Lemma~\ref{lem:residual-sign} gives a strictly subnormal labeling for
\(t>t_1\), a normal labeling at \(t=t_1\), and a consistently strictly
supernormal labeling for \(t<t_1\).

Let \(H=G_{a,b:q:c,d}^{k}\), and make the right branching edge normal by
using the initial value \(z=1/R_G\) from~\eqref{eq:LR-two-branch}.  If
\(1/2\notin J_G\), Proposition~\ref{prop:branching-residuals}(3) gives a
consistently strictly supernormal labeling at \(\alpha=0\), contradicting
Lemma~\ref{lem:endpoint-exclusion}.  Hence \(J_G=(\tau,1/2]\).

On \(J_G\), the only remaining residual is \(\Psi(t)-1\).
Lemma~\ref{lem:endpoint-exclusion} gives \(\Psi(1/2)>1\), while
Proposition~\ref{prop:branching-residuals}(3) shows that \(\Psi\) increases
strictly from \(0\).  Thus it has a unique zero \(t_1\) of \(\Psi(t)-1\) in
\(J_G\).  Lemma~\ref{lem:residual-sign} gives a strictly subnormal labeling
for \(t>t_1\), a normal labeling at \(t=t_1\), and a consistently strictly
supernormal labeling for \(t<t_1\) in \(J_G\).  Outside \(J_G\) one has
\(t<t_1\), and Proposition~\ref{prop:branching-residuals}(3) supplies the
same consistently strictly supernormal conclusion.  Hence there is no
additional equality parameter.

Let \(H=BD_n,\ q=n-5\geq0\) be the internal path parameter of \(BD_n\).  The two length-one
pendent paths have value \(u_1(t)=2-t\) at the branching edge, and normality at that
edge gives the initial value \(c(t)\) in~\eqref{eq:c-BD}.  By
Proposition~\ref{prop:branching-residuals}(4), \(H_{BD,q}\) has a unique
zero \(t_1\) in \(J_{BD}\).  Lemma~\ref{lem:residual-sign} gives a strictly
subnormal labeling for \(t>t_1\), a normal labeling at \(t=t_1\), and a
consistently strictly supernormal labeling for \(t<t_1\) within \(J_{BD}\).
If \(t\notin J_{BD}\), then \(t<t_1\), and the modified propagation assignment in
Proposition~\ref{prop:branching-residuals}(4) gives the same consistently
strictly supernormal conclusion.  Thus no additional equality parameter occurs outside the
propagation domain.

In each non-path case we have obtained a unique \(t_1\in(0,1/2)\).  Define
\[
        \alpha_1=\psi_k(t_1)
        =\frac{1-2t_1}{1-2t_1+t_1^{2/k}}.
\]
At \(t=t_1\), the corresponding assignment is defined on its propagation
domain; all residuals vanish, and every remaining vertex and edge condition
is normal.  Since the hypergraphs are all hypertrees,
the resulting normal labeling is consistent, hence
Lemma~\ref{lem:label-comparison}(3) gives
\(\rho_{\alpha_1}(H)=\lambda_k(\alpha_1)\).  Since \(\psi_k\) is strictly
decreasing, \(t>t_1\) corresponds to \(\alpha<\alpha_1\), and \(t<t_1\)
corresponds to \(\alpha>\alpha_1\).  The strictly subnormal and consistently
strictly supernormal labelings obtained above, together with
Lemma~\ref{lem:label-comparison}, complete the proof.
\end{proof}

\begin{cor}\label{cor:equality-point-uniformity}
Let \(L^{(r)}\) be a hypergraph in the list in
Proposition~\ref{prop:luman-inputs}(2) with fixed
pendent-path lengths and internal path parameters, where \(r=3\) for a member
of the three-uniform list and \(r=4\) when
\(L^{(4)}=H_{1,1,1,\ell}^{(4)}\) for some \(1\leq\ell\leq4\).  For \(k\geq r\), let
\(L^{k}\) be its \(k\)-uniform extension, and let \(t_1(L)\) be
the unique solution of the determining equation for \(L^{(r)}\).  Every \(L^{k}\) has the
same solution \(t_1(L)\), and the value at which equality holds is
\[
        \alpha_1(L,k)=
        \frac{1-2t_1(L)}{1-2t_1(L)+t_1(L)^{2/k}}.
\]
Moreover, \(\alpha_1(L,k)\) is strictly decreasing in \(k\), and
\[
        \lim_{k\to\infty}\alpha_1(L,k)
        =\frac{1-2t_1(L)}{2-2t_1(L)}.
\]
\end{cor}

\begin{proof}
All four determining equations depend only on \(t\) and the fixed pendent-path lengths and
internal path parameters.  Uniform extension does not alter them, by the
computation following~\eqref{eq:edge-normal-a}.  Since \(0<t_1(L)<1\), the
function \(a\mapsto t_1(L)^a\) is strictly decreasing in the exponent \(a\).
As \(2/k\) decreases with \(k\), the quantity \(t_1(L)^{2/k}\) strictly
increases with \(k\), which proves the asserted
monotonicity and the limit.
\end{proof}

\section{Hypergraphs with adjacency spectral radius greater than that of a loose cycle}

In this section, we prove the following theorem.
\begin{theorem}
\label{thm:above-threshold}
Let \(k\geq3\), \(H\) be a finite connected \(k\)-uniform hypergraph. If
\(\rho_0(H)>4^{1/k}\), then
\[
        \rho_\alpha(H)-\lambda_k(\alpha)>0
        \qquad(0<\alpha<1).
\]
\end{theorem}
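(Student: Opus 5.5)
The plan is to reduce to a minimal obstruction and then handle finitely many shapes with the labeling machinery of Section~4. First I dispose of the non-hypertree and non-linear cases. If two edges meet in at least three vertices, Lemma~\ref{sc:lem:large-intersection} gives the claim directly. If two edges meet in exactly two vertices, then $H$ contains $C_2^k$. If $H$ is linear but not a hypertree, it contains a loose cycle $C_m^k$ with $m\geq3$ (in the incidence-graph sense). In both of these cases $H\neq C_2^k$ and $H\neq C_m^k$, since those have $\rho_0=4^{1/k}$. So the cycle is a proper subhypergraph, and Lemma~\ref{sc:lem:cycle-subgraphs} finishes. It remains to treat linear hypertrees $H$ with $\rho_0(H)>4^{1/k}$.

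For such a hypertree I would pick a minimal connected subhypertree $H'\subseteq H$ with $\rho_0(H')>4^{1/k}$. By minimality, deleting any pendent edge of $H'$ (one with at most one non-leaf vertex) gives a connected hypertree with $\rho_0\leq4^{1/k}$. By Proposition~\ref{prop:luman-inputs}, that smaller hypertree is either in list (2) or is one of the hypertrees in list (1). In the latter case, its $\alpha$-spectral radius already exceeds $\lambda_k(\alpha)$ for all $0<\alpha<1$ by Theorem~\ref{thm:equality-families}. Then Proposition~\ref{subgraph} gives $\rho_\alpha(H)>\lambda_k(\alpha)$. So the core case is when every such deletion lands in list (2), or in a loose path. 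Since Proposition~\ref{subgraph} transfers strict inequality upward, it suffices to prove $\rho_\alpha(H')>\lambda_k(\alpha)$.

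For $H'$ I would use Lemma~\ref{lem:Tcriterion} directly. I label all pendent paths normally by Lemmas~\ref{lem:point-arms} and~\ref{lem:pendent-paths-edge}, and internal paths by $T=\xi/2$ or by the propagation of Lemma~\ref{lem:propagation-domain}. The goal is to show that each branching vertex or branching edge has nonnegative excess in \eqref{eq:Tcriterion-vertex}--\eqref{eq:Tcriterion-edge}, with at least one strict. The key monotonicity is that lengthening a pendent path or adding an arm only increases the excess. This holds because $p_\ell$ increases in $\ell$, $g_\ell$ decreases, and extra arms add positive terms or shrink the product. So it is enough to verify the excess for the forbidden minimal configurations. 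These are the ``one-step supergraphs'' of list (1): a threshold member with one edge added, or with one arm lengthened beyond the list-(2) ranges, e.g.\ $F_{1,3,15}$, $E_{1,2,6}$, $G_{1,1:q:1,3}$ with extended arms, hyperstars of degree $\geq5$, and edges carrying $\geq5$ arms. Alternatively, every such $H'$ contains a list-(1) hypertree as a proper subhypergraph, which would let me skip the computation.

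The main obstacle is this last point: showing that every minimal hypertree with $\rho_0>4^{1/k}$ contains a member of Proposition~\ref{prop:luman-inputs}(1) as a subhypergraph. I would try first to extract it from Lu and Man's proof. Their classification shows that the list-(1) hypertrees are exactly the minimal forbidden subhypertrees, mirroring Smith's theorem, where every connected graph with $\rho>2$ contains an extended Dynkin diagram. If that extraction fails, I would fall back on an exhaustive case analysis of branching structure. This analysis would cover the number of branching vertices and edges, the arm counts at each, and whether every arm length lies within the list-(2) ranges. Its aim is to locate a $\Dtil$, $\Btil$, $\BDtil$ or one of the exceptional hypergraphs inside $H'$, at which point Propositions~\ref{prop:infinite-threshold-labelings} and~\ref{prop:finite-threshold-labelings} together with Proposition~\ref{subgraph} conclude.
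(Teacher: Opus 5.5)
Your first step agrees with the paper, and passing to a minimal subhypertree $H'$ is legitimate. The gap is in the core case, and the route you propose for it cannot work.

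\textbf{The Smith-type claim is false.} In the core case, $H'$ contains \emph{no} member of Proposition~\ref{prop:luman-inputs}(1). Suppose $L\subsetneq H'$ were one. Deleting a pendent edge of $H'$ outside $L$ leaves a hypertree containing $L$, hence with $\rho_0\geq4^{1/k}$, hence in list~(1), contrary to the core-case assumption. So the statement you hope to extract from Lu and Man is equivalent to the core case being empty. It is not empty, and their classification asserts no such minimality.

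The simplest witness is $F^{3}_{3,3,3}$. With $u_\ell(1/2)=\frac{\ell+2}{\ell+1}$ one gets $\Phi(1/2)=\frac12\bigl(\frac54\bigr)^3=\frac{125}{128}<1$, so $\rho_0(F^{3}_{3,3,3})>4^{1/3}$. Its pendent-edge deletions are all isomorphic to $F^{3}_{2,3,3}$, which has $\Phi(1/2)=\frac{25}{24}$ and lies in list~(2). Yet $(3,3,3)$ dominates none of the threshold triples $(2,3,4),(2,2,7),(1,5,6),(1,4,8),(1,3,14)$. The threshold condition $\prod\frac{\ell_i+2}{\ell_i+1}=2$ is multiplicative, and Smith's domination phenomenon fails for it.

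The same happens for $G_{1,1:q:1,4}$ with $1\leq q\leq5$, for $G^{3}_{1,1:0:2,2}$ and $H^{(4)}_{1,1,1,5}$, and for the paper's hypertrees $G_1,M,H_1,H_2$. Each is above threshold and contains nothing from list~(1). Your fallback case analysis, aimed at locating a $\Dtil$, $\Btil$, $\BDtil$ or exceptional member inside $H'$, must fail on all of them.

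\textbf{The direct-labeling alternative does not close the gap.} The configurations you name ($F_{1,3,15}$, $E_{1,2,6}$, stars of degree $\geq5$) contain $F_{1,3,14}$, $\Etil_8$ and $S_4$, so your second paragraph already disposed of them. The genuine core cases are one-step enlargements of list-(2) members, obtained by adding an edge, lengthening an arm, or lengthening an internal path. You verify none of them. Your monotonicity (longer pendent paths, more arms) says nothing about internal-path length, yet $G_{1,1:q:1,4}$ differs from the list-(1) member $G_{1,1:0:1,4}$ only there.

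\textbf{What the paper supplies that you lack.}
(i) A reduction to uniformity $r\in\{3,4\}$ via Lemma~\ref{lem:uniform-extension-alpha}, with irreducible $r\geq5$ disposed of by $Se^{(r)}(r)$ and Lemma~\ref{lem:r5}.
(ii) The structural Lemmas~\ref{lem:four-core-reduction} and~\ref{lem:three-core-reduction}. They show that every remaining hypertree has a subhypertree that contracts to a list-(1) member or to one of seven explicit cores.
(iii) Lemma~\ref{lem:two-bridge-alpha}, which says $2$-bridge expansion preserves $\rho_\alpha>\lambda_k(\alpha)$. This is what lets internal paths of arbitrary length be contracted away.
(iv) Explicit residual computations for the seven cores at every $t\in(0,1/2)$, including the modified supernormal assignments outside the propagation domains.
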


For an edge \(e\in E(H)\), we use \(H-e\) to denote the hypergraph obtained
by deleting \(e\) together with all leaf vertices of \(H\) contained in
\(e\); equivalently,
\[
 E(H-e)=E(H)\setminus\{e\}
 \quad\text{and}\quad
 V(H-e)=V(H)\setminus\{x\in e:d_H(x)=1\}.
\]
We say that \(e\) is a \(2\)-bridge of \(H\) if it contains exactly two
non-leaf vertices and \(H-e\) is disconnected. Let \(u\) and \(v\) be the
two non-leaf vertices of the \(2\)-bridge \(e\). The contraction, denoted
by \(H/e\), is the hypergraph obtained from \(H-e\) by identifying \(u\)
and \(v\) as a new vertex \(w\). In this case,
we also say \(H\) is an \emph{expansion} of \(H/e\) at \(w\). A hypergraph
\(H'\) has an expansion at \(w\) if and only if \(w\) is a cut vertex of
\(H'\), i.e.,\(H'=J_1\cup J_2\), \(J_1\cap J_2=\{w\}.\)

\begin{figure}[htbp]
    \centering
    \begin{tikzpicture}[scale=1.0]


        \draw (0,0) circle (1.15);
        \draw (3.1,0) circle (1.15);

        \node at (0,0) {\(J_1\)};
        \node at (3.1,0) {\(J_2\)};


        \node[left=2pt] at (1.25,-0.2) {\(u\)};
        \node[right=2pt] at (1.85,-0.2) {\(v\)};

        \hyperpath{1.95,0}{1.15,0}{1}



        \node at (1.55,0.3) {\(e\)};

        \node at (1.55,-1.55) {\(H\)};


        \begin{scope}[xshift=7cm]

            \draw (0,0) circle (1.15);
            \draw (2.3,0) circle (1.15);

            \node at (0,0) {\(J_1\)};
            \node at (2.3,0) {\(J_2\)};

            \filldraw[fill=red!] (1.15,0) circle (1.0pt);
            \node[left=3pt] at (1.15,0) {\(w\)};

            \node at (1.15,-1.55) {\(H/e\)};

        \end{scope}

    \end{tikzpicture}
\end{figure}

We have the following lemma.

\begin{lem}\label{lem:two-bridge-alpha}
Let $k\geq3$, $0\leq\alpha<1$, and let $H$ be a connected $k$-uniform
hypergraph with a $2$-bridge $e$.  Put $J=H/e$ and
$\lambda=\lambda_k(\alpha)$.  Then
\begin{enumerate}[(1)]
\item if $\rho_\alpha(J)>\lambda$, then $\rho_\alpha(H)>\lambda$;
\item if $\rho_\alpha(J)=\lambda$, then $\rho_\alpha(H)\geq\lambda$.
Moreover, equality holds if and only if, for every consistent
$(\alpha,\lambda^{-k})$-normal weighted incidence matrix $B$ of $J$, the
weight at the contracted vertex $w$ splits evenly between the two
components of $H-e$.  Namely, if $\mathcal E_1$ and $\mathcal E_2$ are the
corresponding sets of edges incident with $w$, then
\[
 \sum_{f\in\mathcal E_1}B(w,f)
 =\frac12
 =\sum_{f\in\mathcal E_2}B(w,f).
\]
\end{enumerate}
\end{lem}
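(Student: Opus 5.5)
The plan is to transfer a labeling of \(J=H/e\) to \(H\) and check that Lemma~\ref{lem:label-comparison} applies, with all the work concentrated on the single new edge \(e\). Write \(H-e=J_1\cup J_2\) with \(u\in J_1\) and \(v\in J_2\). Given a labeling \(B\) of \(J\), define a labeling of \(H\) as follows. Every incidence of \(J\) not at \(w\) is copied. The incidences of \(w\) with edges in \(\mathcal E_i\) are given to \(u\) (for \(i=1\)) or to \(v\) (for \(i=2\)). The leaf incidences of \(e\) get weight \(1\), and \(B(u,e),B(v,e)\) are chosen suitably.

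Put
\[
 s_i=\sum_{f\in\mathcal E_i}B(w,f).
\]
Each \(s_i\) is a sum of entries exceeding \(\alpha/\lambda\), so \(s_i>a:=\alpha/\lambda\). Consistency is inherited. Since \(e\) is a bridge of the incidence structure, every Berge cycle of \(H\) avoids \(e\) and lies inside \(J_1\) or \(J_2\), so it is already a Berge cycle of \(J\) with the same incidence values.

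For (2), take a consistent \((\alpha,\lambda^{-k})\)-normal labeling \(B\) of \(J\), so \(s_1+s_2=1\). Set \(B(u,e)=s_2=1-s_1\) and \(B(v,e)=s_1=1-s_2\). Then \(u\) and \(v\) are normal, and \(B(u,e),B(v,e)>a\). The edge product at \(e\) is
\[
 \frac{(s_1-a)(s_2-a)(1-a)^{k-2}}{(1-\alpha)^k}
 \le\frac{(\tfrac12-a)^2(1-a)^{k-2}}{(1-\alpha)^k}=\lambda^{-k},
\]
using AM--GM and~\eqref{eq:cycle-normal}. Equality holds iff \(s_1=s_2=\tfrac12\). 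Lemma~\ref{lem:label-comparison}(2),(3) then gives \(\rho_\alpha(H)\ge\lambda\), with strict inequality as soon as one such \(B\) splits unevenly; even splitting yields a consistent normal labeling and hence equality. To obtain the full ``if and only if'' with ``for every \(B\)'', I would additionally argue that a consistent normal labeling of \(J\) at \(\beta=\lambda^{-k}\) is unique. It is the one induced by the Perron vector via \(B(v,e)=\alpha/\lambda+(1-\alpha)x^e/(\lambda x_v^k)\), and consistency lets one reconstruct \(x\) up to scaling. Alternatively, note that one uneven \(B\) already forces strictness, so the ``every'' quantifier follows from the existence direction.

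For (1), let \(\mu=\rho_\alpha(J)>\lambda\) with Perron vector \(x\). Define on \(J\)
\[
 B(v,f)=\frac{\alpha}{\lambda}+\frac{(1-\alpha)x^f}{\lambda x_v^k}.
\]
By the eigen-equation, every vertex sum equals \(\mu/\lambda>1\) and every edge product equals exactly \(\lambda^{-k}\). It is consistent because it comes from a vector. At \(w\) we have \(s_1+s_2=\mu/\lambda\), with \(s_1,s_2>a\).

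Set \(B(u,e)=\max(1-s_1,a+\varepsilon)\) and \(B(v,e)=\max(1-s_2,a+\varepsilon)\) with small \(\varepsilon>0\), so that \(u\) and \(v\) are supernormal. One checks case by case that \(B(u,e)+B(v,e)<1\) for small \(\varepsilon\), using \(s_i>a\), \(a<\tfrac12\) and \(s_1+s_2>1\). AM--GM then gives
\[
 (B(u,e)-a)(B(v,e)-a)<\left(\tfrac12-a\right)^2,
\]
so \(e\) is supernormal. All other vertex sums are \(\mu/\lambda>1\), so the labeling is consistently strictly supernormal, and Lemma~\ref{lem:label-comparison}(2) gives \(\rho_\alpha(H)>\lambda\).

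The main obstacle is part (1). The naive choice of labeling \(J\) at its own level \(\mu\) makes \(e\) subnormal, which is the wrong direction. The fix is the Perron-derived labeling rescaled to level \(\lambda\), which keeps the edges normal while making the vertices supernormal. The secondary delicate point is the uniqueness claim needed for ``every'' in (2).
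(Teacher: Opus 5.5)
Your proof is correct and matches the paper's argument: for (1) the paper rescales the Perron labeling of $J$ as $C=cB_0$ with $c=\mu/\lambda$, which is exactly your formula, and it chooses the bridge weights via $\max(\cdot)$ and AM--GM; for (2) it applies the same AM--GM on the bridge edge. The uniqueness of normal labelings is not needed, because the paper obtains the ``for every $B$'' equality criterion exactly through your alternative observation that any unevenly splitting $B$ already gives a consistently strictly supernormal labeling of $H$.
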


\begin{proof}
Put \(y=\alpha/\lambda\) and \(d=1/2-y>0\), by~\eqref{2alpha}. If the two non-leaf
incidences of \(e\) have weights \(p_1,p_2\), and its leaf incidences have
weight \(1\), then the incidence weights on the bridge edge satisfy the
supernormal edge inequality exactly when
\begin{equation}\label{eq:two-bridge-product}
        (p_1-y)(p_2-y)\leq d^2.
\end{equation}

First let \(\mu=\rho_\alpha(J)>\lambda\), and let \(B_0\) be the
consistent \((\alpha,\mu^{-k})\)-normal weighted incidence matrix associated with a
Perron vector of \(J\). Put \(c=\mu/\lambda\) and \(C=cB_0\). Since
\[
 C(v,f)-y=c\bigl(B_0(v,f)-\alpha/\mu\bigr),
\]
the matrix \(C\) is consistent, every old vertex sum is \(c\), and every
old edge is normal with parameter \(\lambda^{-k}\). Let \(X_i\) be the sum
over \(\mathcal E_i\) at \(w\), and put
\[
        a_i=\max\{1-X_i-y,0\}\qquad(i=1,2).
\]
Here \(X_1+X_2=c\). If \(a_1a_2>0\), then
\(a_1+a_2=2-c-2y<2d\), and otherwise \(a_1a_2=0\). Thus
\(a_1a_2<d^2\). For sufficiently small \(\varepsilon>0\), the choices
\[
        p_i=y+a_i+\varepsilon\qquad(i=1,2)
\]
make both new vertex sums greater than \(1\) and
\eqref{eq:two-bridge-product} strict. Since \(e\) lies on no Berge cycle,
the resulting labeling is consistent. Lemma~\ref{lem:label-comparison}
gives \(\rho_\alpha(H)>\lambda\).

Now suppose that \(\rho_\alpha(J)=\lambda\), and let \(B\) be any
consistent \((\alpha,\lambda^{-k})\)-normal weighted incidence matrix of \(J\).
Define \(X_i\) and \(a_i\) as above; now \(X_1+X_2=1\). If
\(X_1\ne X_2\), then \(a_1a_2<d^2\): this is clear when one \(a_i\)
vanishes, while otherwise \(a_1+a_2=2d\) and equality in the
arithmetic--geometric mean inequality would force \(X_1=X_2\). The same
small-\(\varepsilon\) construction therefore gives a consistently strictly
supernormal labeling of \(H\). If \(X_1=X_2=1/2\), taking
\(p_1=p_2=1/2\) gives a consistent normal labeling. Since \(J\) has a
consistent normal weighted incidence matrix obtained from a Perron vector,
Lemma~\ref{lem:label-comparison} proves (2),
including the stated equality criterion.
\end{proof}

Recall that $$\psi_k(t)=\frac{1-2t}{1-2t+t^{2/k}}.$$ For uniform extensions, we have the following lemma.

\begin{lem}\label{lem:uniform-extension-alpha}
Let $3\leq r\leq k$, let $H'$ be a connected $r$-uniform hypergraph, and
let $H$ be its $k$-uniform extension.  Fix $0<t\leq1/2$, and, for
$m\in\{r,k\}$, put
\[
 \alpha_m=\psi_m(t),\qquad
 \lambda_m=\lambda_m(\alpha_m),\qquad
 \beta_m=\lambda_m^{-m}.
\]
$H'$ admits a consistent $(\alpha_r,\beta_r)$-normal labeling if and only
if $H$ admits a consistent $(\alpha_k,\beta_k)$-normal labeling.  The same
equivalence holds for subnormal and supernormal labelings.  Strictness in
the latter two cases, and consistency whenever imposed, are preserved.
\end{lem}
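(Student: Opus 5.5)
The plan is to build an explicit bijection between labelings of \(H'\) and of \(H\) that keeps the normalized variables \(a(v,e)\) of \eqref{eq:normalized-incidence} unchanged. The parametrization by \(t\) is designed for exactly this: the relation \(Q^k=t^{-2}\), which follows from \eqref{eq:cycle}, holds for every uniformity.

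For \(m\in\{r,k\}\), write \(y_m=\alpha_m/\lambda_m\) and \(Q_m=(\lambda_m-\alpha_m)/(1-\alpha_m)\). Given a weighted incidence matrix \(B'\) of \(H'\), define \(B\) on \(H\) as follows.
\begin{itemize}
\item At a vertex of \(H'\) that lies in \(e\), set \(B(v,e)=y_k+\bigl(B'(v,e)-y_r\bigr)\,(1-y_k)/(1-y_r)\). This is an affine change that keeps \(a(v,e)=(B(v,e)-y)/(t(1-y))\) fixed.
\item At each new leaf incidence, set \(B(v,e)=1\).
\end{itemize}
The inverse map is affine in the same way, and it forces the new leaf weights to be \(1\) in the normal case. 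So the correspondence is a bijection between labelings that are normal at the leaves. The positivity requirement \(B>y\) is preserved because the map sends \(B'-y_r\) to a positive multiple.

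Next I compare the conditions one at a time.

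Vertex conditions. A vertex of \(H'\) keeps its degree. The condition \(\sum_e B(v,e)\gtrless 1\) is equivalent to \(U_v\gtrless S_v\) with \(S_v=(1-d_vy)/(t(1-y))\), as in Lemma~\ref{lem:residual-sign}. The quantity \(S_v\) depends on \(y\), so I need to check that it is the same for \(m=r\) and \(m=k\). Writing \(S_v=\frac{1}{t}\bigl(1-(d_v-1)\tfrac{y}{1-y}\bigr)\), it suffices to show that \(y/(1-y)\) depends only on \(t\). This follows from \(t=(1-2y)/(2(1-y))\), which gives \(y/(1-y)=1-2t\). So the vertex residuals agree.

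New leaf vertices. These have degree \(1\) and weight \(1\), so their vertex condition is normal.

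Edge conditions. The computation following \eqref{eq:edge-normal-a} shows that an edge of \(H\) with \(s\) non-leaf variables has product \(\lambda_k^{-k}Q_k^k t^s\prod a_i=\beta_k t^{s-2}\prod a_i\). For \(H'\) the same product is \(\beta_r t^{s-2}\prod a_i\). The only point to verify is that the leaf-incidence factor \((1-y)/(1-\alpha)\) is exactly what \(Q\) absorbs, for leaves that carry weight \(1\). Hence each edge residual \(R_e=t^{s-2}\prod a_i-1\) is identical in the two hypergraphs.

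Every residual therefore coincides, and its sign determines normal, subnormal or supernormal status and strictness, as in the proof of Lemma~\ref{lem:residual-sign}.

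Consistency. In the product defining consistency, each factor \(B(v,e)-y\) is \(t(1-y)a(v,e)\). The constant \(t(1-y)\) cancels between numerator and denominator, so consistency depends only on the \(a\)'s. Berge cycles of \(H\) never pass through the new leaves, so they are exactly the Berge cycles of \(H'\). Consistency is therefore preserved in both directions.

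The main obstacle is the converse direction for subnormal and supernormal labelings of \(H\), where the new leaf incidences need not equal \(1\). Lemma~\ref{lem:residual-sign} assumes leaf weight \(1\), so a reduction is needed first.
\begin{itemize}
\item Subnormal case: a leaf has \(B\le1\). Raising it to \(1\) makes that leaf's vertex condition hold with equality, and it only increases the edge product, which is the subnormal direction.
\item Supernormal case: a leaf has \(B\ge1\). Lowering it to \(1\) likewise makes the vertex condition hold with equality and only decreases the edge product, which is the supernormal direction.
\end{itemize}
If the strictness came from a leaf incidence, I would keep it as strictness of the corresponding edge inequality. For consistency, leaf changes are harmless because leaves lie on no Berge cycle. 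After this reduction the bijection above applies.
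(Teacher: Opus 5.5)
Your proposal is correct and follows essentially the same route as the paper: keep the inherited weights, give the new leaves weight $1$, use $\prod_{v\in e}\frac{B(v,e)-y}{1-\alpha_m}=\beta_m t^{s-2}\prod_i a_i$ to see that the edge residuals do not depend on $m$, note that Berge cycles and consistency products are unchanged, and normalize leaf weights to $1$ in the sub/supernormal cases, moving any lost strictness to the incident edge. The one cosmetic difference is that your identity $y/(1-y)=1-2t$ already forces $y=\alpha_m/\lambda_m=\frac{1-2t}{2(1-t)}$ to be independent of $m$, so your affine map is simply the identity and the vertex sums are literally unchanged; the paper states the argument in exactly that form.
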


\begin{proof}
For $m\in\{r,k\}$,
\[
 \frac{\alpha_m}{\lambda_m}
 =y:=\frac{1-2t}{2(1-t)},
\]
which is independent of $m$.  In a normal labeling every leaf incidence
has weight $1$.  In a subnormal or supernormal labeling we may also assume that every leaf
incidence has weight $1$ for replacing a leaf weight by $1$ preserves the
corresponding inequalities. If this removes the only strict vertex
inequality, then the incident edge inequality becomes strict.

Keep the same weights on the incidences inherited from $H'$ and assign
weight $1$ to every new leaf vertex.  For each non-leaf
incidence write
\[
        B(v,e)=y+t(1-y)a(v,e).
\]
If an $m$-edge has $s$ non-leaf incidences with variables
$a_1,\ldots,a_s$, then \eqref{eq:cycle} gives
\[
 \prod_{v\in e}
 \frac{B(v,e)-\alpha_m/\lambda_m}{1-\alpha_m}
 =\beta_m\,t^{s-2}\prod_{i=1}^s a_i.
\]
Hence the sign of every edge inequality relative to $\beta_m$ is independent
of $m$.  Uniform extension does not alter the vertex sums at the inherited
vertices, and every new leaf vertex has sum $1$.  Finally, every Berge cycle of $H$
corresponds to a Berge cycle of $H'$, and
$B(v,e)-\alpha_m/\lambda_m=t(1-y)a(v,e)$ on every non-leaf vertex.
Thus the consistency products are unchanged.  The same argument applies in
the reverse direction after deleting the added leaf vertices.
\end{proof}

\begin{cor}\label{cor:reduction-contraction-alpha}
Let \(H\) be a connected \(k\)-uniform hypergraph, and let \(G\) be a
connected \(r\)-uniform hypergraph, where \(3\leq r\leq k\), obtained from
\(H\) by a finite sequence of uniformity reductions (the inverses of uniform
extensions) and contractions of \(2\)-bridges. Fix \(0<t<1/2\), and put
\(\alpha_m=\psi_m(t)\) at every uniformity \(m\) occurring in the sequence.
If
\[
        \rho_{\alpha_r}(G)>\lambda_r(\alpha_r),
\]
then
\[
        \rho_{\alpha_k}(H)>\lambda_k(\alpha_k).
\]
\end{cor}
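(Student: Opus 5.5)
Induction on the length of the sequence of operations, reading it backwards from \(G\) to \(H\): the claim will follow once we check that each single elementary step preserves the strict inequality, with each uniformity \(m\) paired with its own parameter \(\alpha_m=\psi_m(t)\) for the fixed \(t\). Write the sequence as \(H=H_0,H_1,\ldots,H_N=G\), where \(H_{i+1}\) is obtained from \(H_i\) either by a uniformity reduction (so \(H_i\) is the one-step or multi-step uniform extension of \(H_{i+1}\)) or by contracting a \(2\)-bridge (so \(H_{i+1}=H_i/e\), with the same uniformity). Let \(m_i\) be the uniformity of \(H_i\). We show by downward induction on \(i\) that \(\rho_{\alpha_{m_i}}(H_i)>\lambda_{m_i}(\alpha_{m_i})\); the case \(i=N\) is the hypothesis, and \(i=0\) is the conclusion.

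For a contraction step, \(m_i=m_{i+1}\) and the parameter \(\alpha_{m_i}\) is unchanged. Applying Lemma~\ref{lem:two-bridge-alpha}(1) to \(H_i\) with \(J=H_i/e=H_{i+1}\) gives \(\rho_{\alpha_{m_i}}(H_i)>\lambda_{m_i}(\alpha_{m_i})\) directly. No condition on \(t\) beyond \(0<\alpha<1\) is needed here; note that \(0<t<1/2\) gives \(\alpha_m\in(0,1)\).

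For a reduction step, \(H_i\) is the \(m_i\)-uniform extension of \(H_{i+1}\) with \(m_{i+1}\le m_i\), and both are connected. We need to convert the strict spectral inequality at level \(m_{i+1}\) into a labeling. Set \(\beta=\lambda_{m_{i+1}}(\alpha_{m_{i+1}})^{-m_{i+1}}\). Since \(\mu:=\rho_{\alpha_{m_{i+1}}}(H_{i+1})>\lambda_{m_{i+1}}(\alpha_{m_{i+1}})\), the same rescaling trick as in the proof of Lemma~\ref{lem:two-bridge-alpha}(1) applies. Take the consistent \((\alpha,\mu^{-m})\)-normal matrix \(B_0\) coming from the Perron vector, and form \(C=cB_0\) with \(c=\mu/\lambda>1\). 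This matrix is consistent, every vertex sum equals \(c>1\), and every edge is normal at parameter \(\beta\). Hence \(C\) is a consistent strictly \((\alpha_{m_{i+1}},\beta)\)-supernormal labeling; positivity \(C(v,e)>\alpha/\lambda\) holds because \(C-y=c(B_0-\alpha/\mu)>0\).

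Lemma~\ref{lem:uniform-extension-alpha}, applied with \(r=m_{i+1}\) and \(k=m_i\) at the same \(t\), then transfers this to a consistent strictly \((\alpha_{m_i},\beta_{m_i})\)-supernormal labeling of \(H_i\). Lemma~\ref{lem:label-comparison}(2) then yields \(\rho_{\alpha_{m_i}}(H_i)>\lambda_{m_i}(\alpha_{m_i})\).

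The main subtlety is this reduction step, namely ensuring that strictness survives the transfer. Lemma~\ref{lem:uniform-extension-alpha} normalizes leaf weights to \(1\), which might lower a leaf vertex sum. Its proof notes that if this destroys the only strict vertex inequality, then the incident edge becomes strict. With \(C\), however, all vertex sums are strict, so non-leaf vertices remain strict and strictness is retained regardless. Consistency is preserved by the same lemma. Chaining the two step types completes the induction.
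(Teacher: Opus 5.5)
Your proposal is correct and takes essentially the same route as the paper. The paper likewise rescales the Perron-vector normal labeling by \(\mu/\lambda_m(\alpha_m)\) to obtain a consistently strictly supernormal labeling, transfers it across reversed uniformity reductions with Lemma~\ref{lem:uniform-extension-alpha}, handles reversed bridge contractions with Lemma~\ref{lem:two-bridge-alpha}(1), and iterates. Your extra check that strictness survives resetting leaf weights to \(1\) fills in a detail the paper leaves implicit: non-leaf vertex sums stay equal to \(c>1\), and any edge whose leaf weight is lowered becomes strictly supernormal.
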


\begin{proof}
Let \(L\) be an \(m\)-uniform hypergraph occurring in the sequence, and
suppose that \(\mu=\rho_{\alpha_m}(L)>\lambda_m(\alpha_m)\). Scaling a
consistent normal weighted incidence matrix of \(L\), obtained from a Perron
vector, by \(\mu/\lambda_m(\alpha_m)\), as in the proof of
Lemma~\ref{lem:two-bridge-alpha}, gives a consistently strictly supernormal
labeling at the loose-cycle parameter. Lemma~\ref{lem:uniform-extension-alpha}
therefore preserves the strict comparison when a uniformity reduction is
reversed, while Lemma~\ref{lem:two-bridge-alpha}(1) preserves it when a
bridge contraction is reversed. Iteration proves the result.
\end{proof}

\begin{theorem}\label{thm:extension-expansion-alpha}
Let $3\leq r\leq k$, and let $G$ be a connected $r$-uniform hypergraph. Suppose that a $k$-uniform hypergraph $H$ can be
obtained from $G$ by uniform extension and a sequence of $2$-bridge
expansions. If
\[
\rho_\alpha(G)>\lambda_r(\alpha)
\qquad\text{for all } \alpha\in(0,1),
\]
then
\[
\rho_\alpha(H)>\lambda_k(\alpha)
\qquad\text{for all } \alpha\in(0,1).
\]
\end{theorem}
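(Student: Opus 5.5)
The proof reparametrizes $\alpha$ by $t$ and then applies Corollary~\ref{cor:reduction-contraction-alpha} pointwise in $t$.

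Recall that $\psi_m(t)=\frac{1-2t}{1-2t+t^{2/m}}$ is a strictly decreasing bijection from $(0,1/2)$ onto $(0,1)$. This follows from the formula for $\psi_m'$ and the limits $\psi_m(1/2)=0$ and $\psi_m(t)\to1$ as $t\downarrow0$. It holds for every uniformity $m\geq3$.

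Fix $\alpha\in(0,1)$ and let $t\in(0,1/2)$ be the unique value with $\psi_k(t)=\alpha$. For every uniformity $m$ that occurs along the construction of $H$ from $G$ (namely $r$, the intermediate uniformities, and $k$), set $\alpha_m=\psi_m(t)$. Then $\alpha_m\in(0,1)$, and $\alpha_k=\alpha$.

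The hypothesis holds for every parameter in $(0,1)$, so in particular $\rho_{\alpha_r}(G)>\lambda_r(\alpha_r)$.

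Next we read the construction backwards. $H$ is produced from $G$ by a uniform extension together with $2$-bridge expansions. Hence $G$ is recovered from $H$ by a finite sequence of $2$-bridge contractions and uniformity reductions, which are the inverses of these operations. Each $2$-bridge expansion at a cut vertex $w$ is, by definition, undone by contracting the new $2$-bridge. The expansions may be carried out at uniformity $r$ or at uniformity $k$, and the order of the operations is irrelevant here. Corollary~\ref{cor:reduction-contraction-alpha} allows an arbitrary finite sequence of reductions and contractions, provided the parameters are coupled through the common value of $t$, and that is exactly how we chose the $\alpha_m$.

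Applying Corollary~\ref{cor:reduction-contraction-alpha} gives $\rho_{\alpha_k}(H)>\lambda_k(\alpha_k)$, that is, $\rho_\alpha(H)>\lambda_k(\alpha)$. Since $\alpha\in(0,1)$ was arbitrary, the theorem follows.

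The only delicate point is bookkeeping. An intermediate hypergraph produced by an expansion at uniformity $k$ need not itself be a $k$-uniform extension of an $r$-uniform hypergraph. So the reduction steps cannot be bundled naively; they must be interleaved in the order given by the construction.

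This causes no real difficulty. Lemma~\ref{lem:uniform-extension-alpha} transfers strict supernormal labelings between uniformities $r$ and $k$ at the same $t$ for any connected hypergraph. Lemma~\ref{lem:two-bridge-alpha}(1) handles each expansion at the current uniformity. Chaining these two lemmas step by step, which is exactly the iteration performed in the proof of Corollary~\ref{cor:reduction-contraction-alpha}, gives the result.
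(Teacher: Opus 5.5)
Your proposal is correct and is essentially the paper's proof: couple the parameters through a common $t$ via $\alpha_m=\psi_m(t)$, apply Corollary~\ref{cor:reduction-contraction-alpha} to the reversed sequence of contractions and uniformity reductions, and use that $\psi_k$ is a strictly decreasing bijection of $(0,1/2)$ onto $(0,1)$. You fix $\alpha$ and solve for $t$, whereas the paper fixes $t$ and invokes bijectivity at the end, which amounts to the same thing; your closing caveat about interleaving is harmless but not needed, since a $2$-bridge expansion at a cut vertex commutes with uniform extension, so every intermediate hypergraph is in fact a uniform extension of an $r$-uniform one.
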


\begin{proof}
Fix $t\in(0,1/2)$ and put
\[
\alpha_r=\psi_r(t),\qquad \alpha_k=\psi_k(t).
\]
Since $\rho_\alpha(G)>\lambda_r(\alpha)$ for every $\alpha\in(0,1)$, we have
\[
\rho_{\alpha_r}(G)>\lambda_r(\alpha_r).
\]
Corollary~\ref{cor:reduction-contraction-alpha}, applied to the reverse
sequence, gives
\[
\rho_{\alpha_k}(H)>\lambda_k(\alpha_k).
\]
Since $\psi_k$ is strictly decreasing and bijectively maps \((0,1/2)\) onto \((0,1)\), it follows that
\[
\rho_\alpha(H)>\lambda_k(\alpha)
\]
for every $\alpha\in(0,1)$.
\end{proof}

Let $Se^{(k)}(d)$ denote the $k$-uniform hypergraph obtained from an edge
$e=(v_1,v_2,\ldots,v_k)$ by attaching a pendent edge to each vertex in $\{v_1,\ldots,v_d\}$. Shan et al. proved the following result.

\begin{lem}[{\cite[Definition~3.5 and Proposition~3.3]{ShanWangWang}}]\label{lem:r5}
Suppose $k\ge 3$, $\alpha\in[0,1)$. There exists $\alpha_0\in[0,1)$
such that for any $\alpha\in[\alpha_0,1)$, it is true that
\[
\rho_{\alpha}\bigl(Se^{(k)}(d)\bigr)>\lambda_k(\alpha).
\]
Furthermore, $\alpha_0>0$ for $d=3,4$ and $\alpha_0=0$ for $d\ge 5$.
\end{lem}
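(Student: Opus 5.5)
The plan is to use the $(\alpha,\beta)$-labeling machinery of Section~4 in the normalized variables~\eqref{eq:normalized-incidence}. The hypergraph $Se^{(k)}(d)$ is a hypertree, so consistency is automatic and Lemma~\ref{lem:residual-sign}(2) together with Lemma~\ref{lem:label-comparison}(2) applies directly.

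Fix $0<t\le 1/2$ and put $\alpha=\psi_k(t)$. Each pendent edge $f_i$ at $v_i$ has exactly one non-leaf vertex, so by~\eqref{eq:edge-normal-a} with $s=1$ its normal edge condition forces $a(v_i,f_i)=t$. Each $v_i$ ($1\le i\le d$) has degree two, and making it normal gives $a(v_i,e)=2-t$, which is positive. The central edge $e$ has $s=d$ non-leaf vertices. Thus all conditions are normal except the central-edge residual
\[
 R_e(t)=\Phi_d(t)-1,\qquad \Phi_d(t)=t^{d-2}(2-t)^d .
\]
This is exactly~\eqref{eq:Phi} with all $\ell_i=1$, since $u_1(t)=2-t$.

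I would next show that $\Phi_d$ is strictly increasing on $(0,1/2]$. Its logarithmic derivative is
\[
 \frac{d-2}{t}-\frac{d}{2-t}\ \ge\ 2(d-2)-\frac{2d}{3}.
\]
This is positive for $d\ge4$. For $d=3$ it equals $1/t-3/(2-t)\ge 0$, with equality only at $t=1/2$. Alternatively, one may simply cite Proposition~\ref{prop:branching-residuals}(2). Moreover $\Phi_d(t)\to0$ as $t\downarrow0$.

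The key computation is the endpoint value
\[
 \Phi_d(1/2)=4\left(\tfrac34\right)^d .
\]
For $d\ge5$ this is less than $1$, since $4(3/4)^5=243/256<1$. By monotonicity, $R_e(t)<0$ for all $t\in(0,1/2]$. Lemma~\ref{lem:residual-sign}(2) then gives a consistently strictly supernormal labeling, and Lemma~\ref{lem:label-comparison}(2) gives $\rho_\alpha(Se^{(k)}(d))>\lambda_k(\alpha)$ for every $\alpha\in[0,1)$. Here $\alpha=0$ is included because $t=1/2$ is allowed, so $\alpha_0=0$.

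For $d=3,4$ we get $\Phi_d(1/2)=27/16$ and $81/64$, both larger than $1$. Monotonicity then yields a unique $t^*\in(0,1/2)$ with $\Phi_d(t^*)=1$, and $R_e<0$ exactly for $t<t^*$. Since $\psi_k$ is a strictly decreasing bijection, setting $\alpha_0=\psi_k(t^*)>0$ gives the strict inequality for all $\alpha>\alpha_0$. At $\alpha=\alpha_0$ the labeling is normal and gives equality, so for the closed interval $[\alpha_0,1)$ one should either take any slightly larger $\alpha_0$ or read the statement with $\alpha>\alpha_0$. For $\alpha<\alpha_0$ the labeling is strictly subnormal, so no smaller $\alpha_0$ works. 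This is consistent with $Se(3)=F_{1,1,1}$ and $Se(4)=H_{1,1,1,1}$ appearing in Proposition~\ref{prop:luman-inputs}(2).

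I expect no serious obstacle. The only delicate points are the boundary case $d=3$ of the monotonicity estimate and the bookkeeping of the endpoint $\alpha_0$ itself.
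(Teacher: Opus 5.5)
Your proof is correct, but it takes a different route from the paper: the paper gives no proof of this lemma. It imports the result from \cite{ShanWangWang} (Definition~3.5 and Proposition~3.3) and uses it only in Section~5, for $Se^{(r)}(r)$ with $r\geq5$.

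\textbf{What your argument amounts to.} You derive the lemma inside the paper from the Section~4 machinery. Your residual $t^{d-2}(2-t)^d-1$ is exactly $\Phi-1$ from \eqref{eq:Phi} with $s=d$ and all $\ell_i=1$. For $d=3,4$ we have $Se^{(k)}(3)=F^{k}_{1,1,1}$ and $Se^{(k)}(4)=H^{k}_{1,1,1,1}$, both in Proposition~\ref{prop:luman-inputs}(2). So in those cases your argument is the $F$/$H$ case of Theorem~\ref{thm:unique-equality-point}, and your threshold $\psi_k(t^*)$ is the transition value $\alpha_1(H,k)$.

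\textbf{The one substantive difference.} You fix the sign of $\Phi(1/2)-1$ by the explicit value $\Phi_d(1/2)=4(3/4)^d$. The theorem instead gets it indirectly, through Lemma~\ref{lem:endpoint-exclusion} and the Lu--Man classification. Your explicit computation is what lets you treat $d\geq5$ uniformly, since those hypergraphs lie outside the Lu--Man lists.

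\textbf{No circularity.} None of Lemma~\ref{lem:residual-sign}, Lemma~\ref{lem:label-comparison}, or Proposition~\ref{prop:branching-residuals} depends on Lemma~\ref{lem:r5}. Your version therefore makes the paper self-contained at this point. It also yields the exact threshold $\alpha_0=\psi_k(t^*)$ with $(t^*)^{d-2}(2-t^*)^d=1$, together with its sharpness. The citation only gives an existence statement.

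\textbf{The endpoint.} Your remark here is right. For $d=3,4$ equality holds at $\alpha=\psi_k(t^*)$, so the closed interval $[\alpha_0,1)$ in the statement requires $\alpha_0$ strictly above the transition value, or the conclusion read for $\alpha>\alpha_0$. The real content of ``$\alpha_0>0$'' is that $\alpha_0=0$ fails. Your strictly subnormal labeling for $t>t^*$ shows this, and in particular covers $t=1/2$, i.e.\ $\alpha=0$.

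\textbf{A minor point.} The statement implicitly needs $d\geq3$, since $Se^{(k)}(1)$ and $Se^{(k)}(2)$ are loose paths. You could say this explicitly.
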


We now explain the strategy for proving Theorem~\ref{thm:above-threshold}.
Let $H$ be a connected $k$-uniform hypergraph with
$\rho_0(H)>4^{1/k}$.  By Proposition~\ref{subgraph}, it is enough to find
inside $H$ a connected subhypergraph for which the desired strict comparison
with $\lambda_k(\alpha)$ is already known.  Thus the purpose of the
structural analysis below is not to classify $H$ again, but to dispose of
every case in which such a subhypergraph is already present and to isolate
the few remaining cases requiring direct calculation.

Lemmas~\ref{sc:lem:cycle-subgraphs} and~\ref{sc:lem:large-intersection},
together with Proposition~\ref{subgraph}, give the required strict
comparison whenever $H$ is non-linear or contains a Berge cycle.  In the remaining
case, perform uniformity reductions while the current uniformity is at least
$4$ and the current hypergraph is reducible.  Let $G$ be the resulting
linear $r$-uniform hypertree.  Then either $r=3$, with no irreducibility
assumption on $G$, or $r\geq4$ and $G$ is irreducible.  Lemmas~\ref{lem:label-comparison}
and~\ref{lem:uniform-extension-alpha} show that
$\rho_0(G)>4^{1/r}$.  We first prove the strict comparison for $G$.
If $r\geq5$, irreducibility supplies an edge of $G$ without leaf vertices;
together with one adjacent edge at each of its vertices, it contains
$Se^{(r)}(r)$.
Lemma~\ref{lem:r5}, with \(d=r\geq5\), and Proposition~\ref{subgraph} prove
the desired comparison.  Hence only an irreducible $4$-uniform hypergraph and
an arbitrary $3$-uniform hypergraph remain.

If $G$ contains, as a proper subhypergraph, a hypergraph $G'$ listed in
Proposition~\ref{prop:luman-inputs}(1), then $G'$ is also a hypertree and
is therefore neither a loose cycle nor $C_2^r$.  Consequently,
Theorem~\ref{thm:equality-families} and Proposition~\ref{subgraph} give
\[
 \rho_\alpha(G)>\rho_\alpha(G')>\lambda_r(\alpha)
 \qquad(0<\alpha<1).
\]
On the other hand, a hypergraph listed in
Proposition~\ref{prop:luman-inputs}(2) cannot be the present $G$, because
its adjacency spectral radius is strictly less than $4^{1/r}$.  Consequently
we need only consider the cases excluded by neither observation.  The
following case analysis shows that every such $G$ contains a connected
subhypergraph $G'$ from which a possibly empty sequence of $2$-bridge
contractions produces either a member of the list in
Proposition~\ref{prop:luman-inputs}(1) or one of the seven hypergraphs \(
H_1,\ H_2,\ H_{1,1,1,5}^{(4)},\ G_1,\
F_{3,3,3}^{3},\ G_{1,1:0:2,2}^{3},\ M.
\)

The four auxiliary hypertrees not belonging to the families introduced
in Section~\ref{sec:structures} are defined below and illustrated in
Fig.~\ref{fig:core-hypergraphs}.

\begin{enumerate}[(1)]
	\item \(H_1\) is the \(4\)-uniform hypertree with a central edge
	\[
	e=\{v_1,v_2,v_3,v_4\}.
	\]
	Two pendent edges are attached at \(v_1\), and one pendent edge is
	attached at each of \(v_2,v_3,v_4\).
	
	\item \(H_2\) is the \(4\)-uniform hypertree constructed from a central
	edge
	\[
	e=\{v_1,v_2,v_3,v_4\}
	\]
	and a second edge
	\[
	f=\{v_1,u_1,u_2,w\},
	\qquad e\cap f=\{v_1\}.
	\]
	One pendent edge is attached at each of \(v_2,v_3,v_4,u_1,u_2\),
	while \(w\) is a leaf vertex.
	
	\item \(G_1\) is the \(3\)-uniform hypertree with a branching vertex
	\(v\) of degree three. A pendent loose path of length \(1\) and a
	pendent loose path of length \(2\) are attached at \(v\). The third
	edge incident with \(v\) is
	\[
	e=\{v,x,y\},
	\]
	and one pendent edge is attached at each of \(x\) and \(y\).
	
	\item \(M\) is the \(3\)-uniform hypertree containing three consecutive
	branching edges \(e_1,e_2,e_3\), where
	\[
	e_1=\{x,a_1,a_2\},\qquad
	e_2=\{x,y,z\},\qquad
	e_3=\{y,b_1,b_2\}.
	\]
	One pendent edge is attached at each of
	\[
	a_1,\ a_2,\ z,\ b_1,\ b_2.
	\]
\end{enumerate}

\begin{figure}[H]
\centering
	\begin{tikzpicture}[thick,scale=0.7]
		\begin{scope}[shift={(-6.5,0)}]
		\node at(0,2.5){\large$H_1$};
	      \coordinate (v1) at (-0.8,0);
		\coordinate (v2) at (-0.2,0);
        \coordinate (v3) at (0.2,0);
		\coordinate (v4) at (0.8,0);	
		\fitellipsis[gray]{v1}{v4}{3}{5}
		\node[circle,fill=red!80,inner sep=0.8pt] at (v1) {};
		\node[circle,fill=red!80,inner sep=0.8pt] at (v2) {};
		\node[circle,fill=red!80,inner sep=0.8pt] at (v3) {};
		\node[circle,fill=red!80,inner sep=0.8pt] at (v4) {};
		\coordinate (a) at (-1.3,1.5);
		\hyperpath{v1}{a}{1}
        \coordinate (e) at (-1.3,-1.5);
		\hyperpath{v1}{e}{1}
		\coordinate (b) at (-0.3,-1.6);
		\hyperpath{v2}{b}{1}
        \coordinate (c) at (0.3,1.6);
		\hyperpath{v3}{c}{1}
		\coordinate (d) at (1.3,1.5);
		\hyperpath{v4}{d}{1}
		\end{scope}
		\begin{scope}[shift={(0,0)}]
		\node at(-0.5,2.5){\large$H_2$};
	      \coordinate (v1) at (-0.8,0);
		\coordinate (v2) at (-0.2,0);
        \coordinate (v3) at (0.2,0);
		\coordinate (v4) at (0.8,0);	
		\fitellipsis[gray]{v1}{v4}{3}{5}
		\node[circle,fill=red!80,inner sep=0.8pt] at (v1) {};
		\node[circle,fill=red!80,inner sep=0.8pt] at (v2) {};
		\node[circle,fill=red!80,inner sep=0.8pt] at (v3) {};
		\node[circle,fill=red!80,inner sep=0.8pt] at (v4) {};
		\coordinate (a) at (-2.4,0);
		\hyperpath{v1}{a}{1}
        \coordinate (e) at (-2.5,1.6);
		\hyperpath{e}{a}{1}
        \coordinate (f) at (-1.6,0);
        \coordinate (g) at (-1.7,1.6);
		\hyperpath{f}{g}{1}
		\coordinate (b) at (-0.3,-1.6);
		\hyperpath{v2}{b}{1}
        \coordinate (c) at (0.3,1.6);
		\hyperpath{v3}{c}{1}
		\coordinate (d) at (1.3,1.5);
		\hyperpath{v4}{d}{1}
		\end{scope}
		\begin{scope}[shift={(4,0)}]
			\node at(0,2.5){\large$G_1$};
			\bver{0,0}
			\hyperpath{0,0}{0,1.8}{2}
			\hyperpath{0,0}{-1.5,-1.3}{2}
			\hyperpath{0,0}{0.75,-0.65}{1}
            \hyperpath{0,0.45}{0.9,0.55}{1}
		\end{scope}
		\begin{scope}[shift={(10,0)}]
        \node at(0.5,2.5){\large$M$};
        \coordinate (A) at (-1.6,0) ;
        \coordinate (B) at (2.4,0);
        \coordinate (C) at (-1,0);
        \coordinate (D) at (3.2,0);
        \hyperpath{A}{D}{3}  
        \hyperpath{A}{-2.7,1.1}{1}
        \hyperpath{-1,0}{0.1,1.1}{1}
        \hyperpath{D}{4.3,1.1}{1}
        \hyperpath{2.6,0}{1.5,1.1}{1}
        \hyperpath{0.8,-1.6}{0.8,0}{1}
		\end{scope}
	\end{tikzpicture}
\caption{The hypergraphs $H_1$, $H_2$, $G_1$, and $M$.}
\label{fig:core-hypergraphs}
\end{figure}

We treat the two remaining uniformities separately.  The resulting $4$-uniform
hypergraph is irreducible, whereas uniformity reduction stops at $3$ without
imposing irreducibility on the resulting $3$-uniform hypergraph.

\begin{lem}\label{lem:four-core-reduction}
	Let $H$ be an irreducible linear $4$-uniform hypertree satisfying
	$\rho_0(H)>4^{1/4}$. Then $H$ has a connected
	subhypergraph $H'$ from which a possibly empty sequence of
	$2$-bridge contractions produces either a member of the list in
	Proposition~\ref{prop:luman-inputs}(1) or one of the following three
	hypergraphs:
	\[
	H_1,\ H_2,\ H_{1,1,1,5}^{(4)}.
	\]
\end{lem}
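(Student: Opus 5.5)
Let \(H\) be an irreducible linear \(4\)-uniform hypertree with \(\rho_0(H)>4^{1/4}\). The plan is a structural case analysis driven by the irreducibility of \(H\). Irreducibility gives an edge \(e=\{v_1,v_2,v_3,v_4\}\) all of whose vertices are non-leaf. Choose one further edge \(f_i\ni v_i\) for each \(i\). Since \(H\) is a linear hypertree, these four edges are distinct and pairwise disjoint outside \(e\). They span a copy of \(H_{1,1,1,1}^{(4)}\), which lies in Proposition~\ref{prop:luman-inputs}(2). We then enlarge this core step by step inside \(H\). At each step we stop as soon as the current connected subhypergraph \(H'\), possibly after contracting \(2\)-bridges, is either a member of Proposition~\ref{prop:luman-inputs}(1) or one of \(H_1,H_2,H_{1,1,1,5}^{(4)}\). \(2\)-bridge contractions are needed because long internal paths between branching structures can be shortened to the short configurations listed.

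The case split is by how \(H\) extends beyond \(e\cup f_1\cup\cdots\cup f_4\).

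\textbf{Case (a).} Some \(v_i\) has degree at least three. Adding a second pendent edge at \(v_1\) gives \(H_1\) directly.

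\textbf{Case (b).} Every \(v_i\) has degree two, but some \(f_i\) contains a second non-leaf vertex. First suppose \(f_1\) carries two further non-leaf vertices \(u_1,u_2\). Then \(e,f_1\) and pendent edges at \(v_2,v_3,v_4,u_1,u_2\) form \(H_2\). Next suppose \(f_1\) carries only one further non-leaf vertex \(u\). Follow the loose path out of \(e\) through \(f_1\) until the first branching edge or branching vertex:
\begin{itemize}
\item If it ends at a branching vertex, we obtain \(H_{1,1,1,\ell}\) glued to a vertex of degree at least three. A contraction reduces this to an \(S_4\)-type or \(\widetilde{BD}\)-type configuration, placing \(H'\) in list~(1).
\item If it ends at a branching edge with further non-leaf vertices, we get \(G\)-type configurations or \(4\)-uniform analogues. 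These are reduced by \(2\)-bridge contractions to \(H_2\) or to a list~(1) member such as \(\Btil\)/\(H_{1,1,2,2}\).
\end{itemize}

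\textbf{Case (c).} All \(v_i\) have degree two and the extension is purely along pendent paths, so \(H\supseteq H_{\ell_1,\ell_2,\ell_3,\ell_4}\) with \(\ell_1\le\cdots\le\ell_4\). If \(\ell_3\ge2\), then \(H\) contains \(H_{1,1,2,2}\), which is in list~(1), and it is proper unless \(H\) equals it. If \(\ell_3=1\), then \(\rho_0(H)>4^{1/4}\) rules out \(H=H_{1,1,1,\ell}\) with \(\ell\le4\) by list~(2). So either \(\ell_4\ge5\), giving \(H_{1,1,1,5}^{(4)}\), or extra structure appears and returns us to case (b).

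The main obstacle is ensuring exhaustiveness in case (b). One must check that every branching pattern reached along a pendent path out of \(e\) contracts, via \(2\)-bridges, to one of the finitely many targets. This requires care with which contractions preserve the needed structure. I would organize this by the distance \(q\) from \(e\) to the first branching object, and contract all degree-two propagation edges so as to reduce to \(q=0\). After that, only finitely many local pictures remain to compare against the lists.
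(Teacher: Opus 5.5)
Your overall plan is the paper's, but one step as written asserts something false. The shared decomposition is this: fix a leafless edge $e=\{v_1,\dots,v_4\}$, then split by whether
\begin{itemize}
\item every branch at $v_i$ is a pendent path starting at $v_i$, giving $H_{a,b,c,d}^{(4)}$, handled via $H_{1,1,2,2}^{(4)}$ and $H_{1,1,1,5}^{(4)}$;
\item some $v_i$ has degree at least three, giving $H_1$; or
\item the loose path leaving some $v_i$ first reaches a branching vertex or a branching edge.
\end{itemize}
Your cases (a), (c) and the first subcase of (b) are correct.

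\textbf{The step that fails.} In case (b), when the path from $v_1$ ends at a branching vertex $z$, nothing reduces to an $S_4$- or $\BDtil$-type member of list~(1). Take the minimal instance: $e$, one edge at each of $v_2,v_3,v_4$, an edge $f_1=\{v_1,u,w,w'\}$ with $w,w'$ leaves, and two pendent edges at $u$.
\begin{itemize}
\item No vertex of degree four ever appears, because contracting $f_1$ only raises the degree of $v_1$ to three. So $S_4^{(4)}$ cannot arise.
\item The only branching edge is $e$. Its branches at $v_2,v_3,v_4$ are single edges, and its branch at $v_1$ is the one leading to the unique degree-three vertex $u$. So $e$ never carries a pendent path of length two in addition to its connection to a branching vertex, and no $\BDtil_n^{(4)}$ can arise.
\end{itemize}
What contracting the $2$-bridges of the path actually does is identify $v_1$ with $z$. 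Then $v_1$ has degree three and the result is exactly $H_1$, the same configuration as your case (a). Since $H_1$ is an allowed target, the repair is immediate.

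\textbf{The branching-edge subcase.} Here you should not hedge between $H_2$ and $\Btil$ or $H_{1,1,2,2}^{(4)}$; neither of the latter arises from the minimal configuration. Retain only:
\begin{itemize}
\item the path from $e$ to $z$;
\item one edge at each of two further non-leaf vertices of $z$;
\item one edge at each of $v_2,v_3,v_4$.
\end{itemize}
The path edges are $2$-bridges of this subhypergraph. Contracting them makes $z$ meet $e$ in $v_1$, which is precisely $H_2$.

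With these two corrections your argument coincides with the paper's proof. The exhaustiveness obstacle you anticipate also disappears. Once you retain a minimal subhypergraph and contract down to $q=0$, only the two pictures $H_1$ and $H_2$ occur, so there is no further list of local configurations to check.
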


\begin{proof}
    Since $H$ is irreducible, it has an edge
	$e$ none of whose four vertices is a leaf.  Write
	$e=\{v_1,v_2,v_3,v_4\}$, and let $T_i$ be the component of $H-e$
	containing $v_i$.  Each $T_i$ contains at least one edge.  Suppose first
	that every $T_i$ is a loose path and $v_i$ is an end vertex of $T_i$ for $i\in\{1,2,3,4\}$. In this case
	$H=H_{a,b,c,d}^{(4)}$, where $1\leq a\leq b\leq c\leq d$.  If
	$c\geq2$, then $H$ contains $H_{1,1,2,2}^{(4)}$, which is listed in
	Proposition~\ref{prop:luman-inputs}(1).  It is a proper subhypergraph,
	because $\rho_0(H)>4^{1/4}$ whereas
	$\rho_0(H_{1,1,2,2}^{(4)})=4^{1/4}$.  If $c=1$, then $a=b=c=1$.
	For $d\leq4$, Proposition~\ref{prop:luman-inputs}(2) gives
	$\rho_0(H)<4^{1/4}$, contrary to the hypothesis; for $d\geq5$, $H$
	contains $H_{1,1,1,5}^{(4)}$.
	
	Next suppose that some $T_i$ is a loose path but $v_i$ is an internal
	vertex of that path.  Retain $e$, the two edges of $T_i$ incident with
	$v_i$, and one edge from each $T_j$ with $j\ne i$. The resulting connected
	subhypergraph is $H_1$. Thus $H$ contains $H_1$ as a subhypergraph.
	
	It remains to consider the case in which some $T_i$ is not a loose path.
	Root $T_i$ at $v_i$, and let $z$ be the first branching vertex or branching edge encountered
	from the root. The part between $e$ and $z$ is a loose path.  If $z$ is a
	branching vertex, retain this path, two edges leaving $z$, and one edge from
	each of the other three components of $H-e$.  Every internal edge of the
	retained path is a $2$-bridge, and contracting these edges produces $H_1$.
	If $z$ is a branching edge, then, besides the non-leaf vertex of $z$ lying
	toward $e$, the edge $z$ has at least two other non-leaf vertices.  Choose
	two of them and retain one edge leaving $z$ at each, together with the path
	from $e$ to $z$ and one edge from each of the other three components of
	$H-e$.  Contracting the internal $2$-bridges of the retained path produces
	$H_2$.  These cases exhaust all possibilities for an irreducible linear $4$-uniform hypertree.
\end{proof}

We now turn to the $3$-uniform case.

\begin{lem}\label{lem:three-core-reduction}
Let $H$ be a linear $3$-uniform hypertree satisfying
$\rho_0(H)>4^{1/3}$. Then $H$ has a connected
subhypergraph $H'$ from which a possibly empty sequence of
$2$-bridge contractions produces either a member of the list in
Proposition~\ref{prop:luman-inputs}(1) or one of the following four
hypergraphs:
\[
\ G_1,\
 F_{3,3,3}^{3},\ G_{1,1:0:2,2}^{3},\ M.
\]
\end{lem}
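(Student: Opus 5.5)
We argue by structural case analysis, in the style of Lemma~\ref{lem:four-core-reduction}. Throughout we use one principle. If some connected subhypergraph $H'$ contracts, via $2$-bridges, to a hypergraph $G'$ in the list of Proposition~\ref{prop:luman-inputs}(1), we are done. Otherwise every subhypergraph of $H$, after contracting its internal $2$-bridges, has adjacency spectral radius strictly below $4^{1/3}$. Such a contraction is therefore a path or one of the families in Proposition~\ref{prop:luman-inputs}(2). Since $\rho_0(H)>4^{1/3}$, $H$ itself is not of that form. So we look for a minimal obstruction.

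\textbf{Step 1: $H$ has no branching edge.} Then every edge has at most two non-leaf vertices, and $H$ is the $3$-uniform power of an ordinary tree $T$. Lu and Man's analysis for this case parallels Smith's theorem. $T$ must contain either a vertex of degree at least $4$ (giving $S_4^3$), two branching vertices (giving $\Dtil_n^3$), or a single branching vertex of degree $3$ with arms $a\le b\le c$. In the last case, since $H$ is not $D_n,E_6,E_7,E_8$, the arms contain $E_{2,2,2}$, $E_{1,3,3}$ or $E_{1,2,5}$. These are $\Etil_6,\Etil_7,\Etil_8$, all in list~(1).

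\textbf{Step 2: $H$ has a branching vertex $v$ and a branching edge.} Take a branching vertex $v$ and a branching edge $e$ at minimal distance. Retain the connecting path, two further edges at $v$, and one edge at each of the two other non-leaf vertices of $e$. Contracting the internal $2$-bridges yields $BD$-type configurations. If any retained arm is longer, or a second branching structure is nearby, we obtain $\BDtil_n^3$. If instead $v$ has an arm of length $2$ and $e$ sits directly on the third edge at $v$, we obtain $G_1$. The subcases must be sorted by the arm lengths at $v$ and at $e$: $BD_n$ requires all of them equal to one, and any lengthening produces $\BDtil$ or $G_1$ after contraction.

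\textbf{Step 3: $H$ has branching edges but no branching vertex.} First suppose there is a single branching edge with three arms $\ell_1\le\ell_2\le\ell_3$. Since $H$ is none of $D'_n$, $B_n$, or the listed exceptional $F$'s, the arm triple dominates a minimal excluded triple. Each minimal excluded triple is one of $F_{3,3,3}$, $F_{2,3,4}$, $F_{2,2,7}$, $F_{1,5,6}$, $F_{1,4,8}$ or $F_{1,3,14}$. Checking this against the list in Proposition~\ref{prop:luman-inputs}(2) is routine: the minimal triples not in list~(2) are exactly these. Next suppose there are at least two branching edges. Pick two at minimal distance and contract the path between them to get $G_{a,b:q:c,d}$. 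If the arms exceed the allowed $B'_n$, $\bar B_n$, $G_{1,1:q:1,3}$ patterns, we find $\Btil_n$, $G_{1,1:0:1,4}$, $G_{1,1:6:1,3}$ or $G_{1,1:0:2,2}$. If three branching edges occur, choose three consecutive ones and contract so that the middle one meets both others; this yields $M$, or a hypergraph containing $\Btil$ after contraction.

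The main obstacle is Step 3 with two branching edges. There, the minimal non-member patterns of $G_{a,b:q:c,d}$ must be enumerated carefully as $q$ varies, and for each one we must check that it is either in list~(1) or contains $G_{1,1:0:2,2}$ after contraction. I would handle this by first fixing $(a,b)=(1,1)$ and bounding $(c,d)$ using the $G_{1,1:q:1,3}$ range $q\le5$. I would then treat the case $b\ge2$ on both sides, which yields $\Btil$, and the case $b\ge2$ on one side only, which yields $G_{1,1:0:2,2}$ after contracting to $q=0$ when $c=d=2$.
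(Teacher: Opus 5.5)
Your proposal follows essentially the same case analysis as the paper, only reordered. The paper first disposes globally of a vertex of degree at least four via $S_4^3$ and of two degree-three vertices via $\Dtil_n^3$, then treats a unique degree-three vertex, then the cases with $0,1,2,\geq3$ branching edges, and it reaches the same targets.

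Three slips need repair:
\begin{enumerate}
\item Your opening ``principle'' is false as stated: $F_{3,3,3}^3$ has $\rho_0>4^{1/3}$, yet no subhypergraph of it contracts to a member of list~(1), which is exactly why the four extra targets are needed. Steps~1--3 never actually use it.
\item In Step~2 you must still dispatch $d_H(v)\geq4$ via $S_4^3$.
\item In Step~3 you must keep $q$ rather than contracting the central path. The hypergraph $G_{1,1:q:1,3}^3$ lies in list~(2) for $q\leq5$, so for $q\geq7$ it may be contracted only down to $q=6$.
\end{enumerate}
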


\begin{proof}
Since \(\mathcal I(H)\) is a tree, the components of
\(\mathcal I(H)-v\), together with \(v\), determine the branches at a
vertex \(v\); the analogous convention applies to an edge-vertex of
\(\mathcal I(H)\).

If $H$ contains a vertex of degree at least four, then $H$ contains $S_4^3$.  If two
vertices have degree three, the path joining them, together with two edges
leaving each end vertex, contains some $\Dtil_n^3$.  The hypergraphs
$S_4^3$ and $\Dtil_n^3$ are listed in
Proposition~\ref{prop:luman-inputs}(1).  They must occur properly, since
their adjacency spectral radius is $4^{1/3}$ and
$\rho_0(H)>4^{1/3}$.  We may therefore assume that $H$ has at most one
vertex of degree three and no vertex of larger degree.

Suppose that $v$ is the unique vertex of degree three.  First assume that
none of its three branches contains a branching edge.  Then
$H=E_{a,b,c}^3$, where $a\leq b\leq c$.  If $a\geq2$, then $H$ contains
$\Etil_6^3=E_{2,2,2}^3$.  Thus we may take $a=1$.  If $b\geq3$, then
$H$ contains $\Etil_7^3=E_{1,3,3}^3$; if $b=2$ and $c\geq5$, it contains
$\Etil_8^3=E_{1,2,5}^3$.  These three hypergraphs are listed in
Proposition~\ref{prop:luman-inputs}(1).  The remaining possibilities are
$E_{1,1,c}^3=D_{c+2}^3$ and $E_{1,2,c}^3$ with $2\leq c\leq4$; by
Proposition~\ref{prop:luman-inputs}(2), each has adjacency spectral radius
strictly less than $4^{1/3}$ and hence cannot be $H$.

Next suppose that one of the branches at $v$ contains a branching edge.
At least one branch has length one, since otherwise the first two edges in
each branch form $\Etil_6^3$.  Fix such a length-one branch.  If another
branch has at least two edges, retain its first two edges.  In the remaining
branch, retain the path from $v$ to the first branching edge and one edge
leaving each of the other two non-leaf vertices of that branching edge.
After contracting the $2$-bridges in the retained path, the resulting
hypergraph is $G_1$.  Otherwise the first two branches at $v$ both have
length one.  Let $e$ be the first branching edge in the third branch.  If
one of the two branches leaving $e$ has at least two edges, the retained
subhypergraph contains $\BDtil_n^3$ for some $n\geq6$, which is listed in
Proposition~\ref{prop:luman-inputs}(1).  If both have length one, then
$H\cong BD_n^3$ for some $n\geq5$, and
Proposition~\ref{prop:luman-inputs}(2) gives
$\rho_0(H)<4^{1/3}$.  This completes the case in which $H$ has a vertex of
degree three.

We may now assume that every vertex has degree at most two and classify
$H$ by the number of branching edges.  If there is no branching edge, then
$H$ is a loose path, so Proposition~\ref{prop:luman-inputs}(2) contradicts
$\rho_0(H)>4^{1/3}$.

If there is exactly one branching edge, write
$H=F_{a,b,c}^3$, where $a\leq b\leq c$.  If $a\geq3$, then $H$ contains
$F_{3,3,3}^3$.  Suppose $a=2$.  When $b\geq3$ and $c\geq4$, $H$ contains
$F_{2,3,4}^3$; the only remaining possibility with $b\geq3$ is
$F_{2,3,3}^3$.  When $b=2$ and $c\geq7$, $H$ contains
$F_{2,2,7}^3$; for $c\leq6$, it is one of the hypergraphs
$F_{2,2,c}^3$.  Now suppose $a=1$.  If $b\geq5$ and $c\geq6$, $H$
contains $F_{1,5,6}^3$.  If $b=4$ and $c\geq8$, it contains
$F_{1,4,8}^3$; if $b=3$ and $c\geq14$, it contains
$F_{1,3,14}^3$.  The five hypergraphs just used as subhypergraphs are
listed in Proposition~\ref{prop:luman-inputs}(1).  All possibilities not
covered by them are
\[
 F_{2,3,3}^3,\quad F_{2,2,c}^3\ (2\leq c\leq6),\quad
 F_{1,5,5}^3,\quad F_{1,4,c}^3\ (4\leq c\leq7),
\]
\[
 F_{1,3,c}^3\ (3\leq c\leq13),\quad
 D_n'^3,\quad B_n^3,
\]
and Proposition~\ref{prop:luman-inputs}(2) says that each of these has
adjacency spectral radius strictly less than $4^{1/3}$.  Hence this case
reduces either to a proper member of the list in
Proposition~\ref{prop:luman-inputs}(1) or to $F_{3,3,3}^3$.

If there are exactly two branching edges, write
$H=G_{a,b:q:c,d}^3$, with $a\leq b$ and $c\leq d$.  If $a+b\geq3$ and
$c+d\geq3$, then $H$ contains
$G_{1,2:q:1,2}^3=\Btil_{q+8}^3$.  This subhypergraph is proper, since
$\rho_0(H)>4^{1/3}$ whereas $\rho_0(\Btil_{q+8}^3)=4^{1/3}$.  Otherwise,
after interchanging the two ends, we may
assume that $a=b=1$.  If $c\geq2$, retain
$H'=G_{1,1:q:2,2}^3$.  Contracting the internal $2$-bridges of $H'$ produces
$G_{1,1:0:2,2}^3$.

It remains to consider $c=1$.  If $d=1$ or $2$, then
$H=B_{q+6}'{}^3$ or $\bar B_{q+7}^3$, respectively, and
Proposition~\ref{prop:luman-inputs}(2) gives a contradiction.  If $d=3$,
the same proposition excludes $0\leq q\leq5$, while $q=6$ gives
$G_{1,1:6:1,3}^3$, which is listed in
Proposition~\ref{prop:luman-inputs}(1), again contrary to the hypothesis.
For $q\geq7$, successively
contracting $q-6$ internal $2$-bridges produces
$G_{1,1:6:1,3}^3$.  Finally, if $d\geq4$, retain
$H'=G_{1,1:q:1,4}^3$.  Contracting the internal $2$-bridges of $H'$ produces
$G_{1,1:0:1,4}^3$, which is also listed in
Proposition~\ref{prop:luman-inputs}(1).

Finally, suppose that $H$ has at least three branching edges.  In the minimal
subtree of $\mathcal I(H)$ containing the edge-vertices corresponding to the
branching edges, choose three that are consecutive.  Retain the paths joining
them; at each of the two end
branching edges retain one edge in each of the two directions away from the
middle branching edge, and at the middle branching edge retain one edge in
its third direction.  Every internal edge on the two joining paths is a
$2$-bridge of the retained subhypergraph.  Contracting these edges produces
$M$.  This proves the lemma.
\end{proof}

In either Lemma~\ref{lem:four-core-reduction} or
Lemma~\ref{lem:three-core-reduction}, the hypergraph obtained by
the indicated contractions is a hypertree.  If it belongs to the list in
Proposition~\ref{prop:luman-inputs}(1), it is therefore neither a loose
cycle nor $C_2^r$, and Theorem~\ref{thm:equality-families} gives the
required strict comparison.  Hence it remains only to treat the seven hypergraphs \(
H_1,\ H_2,\ H_{1,1,1,5}^{(4)},\ G_1,\
F_{3,3,3}^{3},\ G_{1,1:0:2,2}^{3},\ M.
\)

\begin{proof}[Proof of Theorem~\ref{thm:above-threshold}]
Apply the reductions above to $H$.  In the remaining case, they yield an
$r$-uniform hypergraph $G$, where $r\in\{3,4\}$, a connected subhypergraph
$G'\subseteq G$, and a hypergraph $K$ obtained from $G'$ by the indicated
$2$-bridge contractions.  It remains to treat
\[
 H_1,\ H_2,\ H_{1,1,1,5}^{(4)},\ G_1,\
 F_{3,3,3}^{3},\ G_{1,1:0:2,2}^{3},\ M.
\]
Put
\[
        \alpha_t=\psi_r(t),\qquad
        \beta_t=\lambda_r(\alpha_t)^{-r}
        \qquad(0<t\leq1/2).
\]
By the propagation rules above, a pendent path of length one has value
\(t\) at a branching vertex and \(2-t\) at a branching edge. With \(c(t)\)
as in~\eqref{eq:c-BD}, put
\begin{equation}\label{eq:four-core-residuals}
\begin{aligned}
 h(t)&=4-\frac1t-2t,
 &R_{H_1}(t)&=t^2(2-t)^3h(t)-1,\\
 R_{H_2}(t)&=t^2(2-t)^3(2-c(t))-1,
 &R_{G_1}(t)&=4-\frac1t-t-f(t)-c(t),\\
 R_M(t)&=t(2-t)(2-c(t))^2-1.
\end{aligned}
\end{equation}
Set
\[
 J_{H_1}=\{t\in(0,1/2]:h(t)>0\},\qquad
 J_{H_2}=J_M=\{t\in(0,1/2]:c(t)<2\},\qquad
 J_{G_1}=(0,1/2].
\]
All unspecified incidence values are determined by the normality equations. On
the stated domains, the following assignments are positive and have only
the indicated residual.
\begin{enumerate}[(1)]
\item For \(H_1\), set
\[
 a(v_1,e)=h(t),\quad a(v_i,e)=2-t\quad(2\leq i\leq4),\quad
 a(v_1,e')=t
\]
for the two pendent edges \(e'\) at \(v_1\); the residual is \(R_{H_1}\)
at \(e\).

\item For \(H_2\), set
\[
 a(v_1,f)=c(t),\quad a(v_1,e)=2-c(t),\quad
 a(u_j,f)=2-t,\quad a(v_i,e)=2-t
\]
for \(j=1,2\) and \(i=2,3,4\); the residual is \(R_{H_2}\) at \(e\).

\item For \(G_1\), the pendent paths of lengths one and two have values
\(t\) and \(f(t)\) at \(v\).  Set
\(a(v,e)=c(t)\) and \(a(x,e)=a(y,e)=2-t\); the residual is \(R_{G_1}\)
at \(v\).

\item For \(M\), set
\[
\begin{gathered}
 a(x,e_1)=a(y,e_3)=c(t),\qquad
 a(x,e_2)=a(y,e_2)=2-c(t),\\
 a(a_i,e_1)=a(b_i,e_3)=2-t\quad(i=1,2),\qquad a(z,e_2)=2-t.
\end{gathered}
\]
The residual is \(R_M\) at \(e_2\).
\end{enumerate}

Now \(h'(t)=t^{-2}-2>0\) and
\[
 \frac{d}{dt}\bigl(t^2(2-t)^3\bigr)
 =t(2-t)^2(4-5t)>0,
\]
so \(R_{H_1}\) is strictly increasing on \(J_{H_1}\). Also,
\[
 R_{G_1}'(t)=\frac1{t^2}-1-\frac1{(2-t)^2}-c'(t)>0.
\]
On \(J_{H_2}=J_M\), both factors in
\[
 R_{H_2}(t)+1=t(2-t)\bigl(2t(2-t)^2-1\bigr)
\]
are positive and strictly increasing, and the same is true of the factors in
\(R_M(t)+1=t(2-t)(2-c(t))^2\). Hence the four residuals are strictly
increasing on their stated domains. Moreover,
\begin{equation}\label{eq:four-core-endpoints}
 R_{H_1}(1/2)=-\frac5{32},\quad
 R_{H_2}(1/2)=-\frac1{16},\quad
 R_{G_1}(1/2)=-\frac1{18},\quad
 R_M(1/2)=-\frac2{27}.
\end{equation}
Thus they are negative throughout those domains.

On the complementary domains, modify the assignments as follows.
\begin{enumerate}[(i)]
\item If \(t\notin J_{H_1}\), replace \(a(v_1,e)=h(t)\) by \(1\).
The nonzero residuals at \(v_1\) and \(e\) are
\(h(t)-1\) and \(t^2(2-t)^3-1\), respectively.
\item If \(t\notin J_{H_2}\), set \(a(v_1,f)=a(v_1,e)=1\).
The nonzero residuals at \(f\) and \(e\) are
\(c(t)^{-1}-1\) and \(t^2(2-t)^3-1\), respectively.
\item If \(t\notin J_M\), set
\[
 a(x,e_1)=a(x,e_2)=a(y,e_2)=a(y,e_3)=1.
\]
The residual at each side edge is \(c(t)^{-1}-1\), and that at \(e_2\)
is \(t(2-t)-1\).
\end{enumerate}
All these residuals are negative, so the modified assignments are positive
and induce strictly supernormal labelings. Since \(H_1,H_2,G_1\), and \(M\)
are hypertrees, all these labelings are consistent.
Lemmas~\ref{lem:residual-sign}
and~\ref{lem:label-comparison} therefore give
\[
 \rho_\alpha(H_i)>\lambda_4(\alpha)\quad(i=1,2),\qquad
 \rho_\alpha(G_1)>\lambda_3(\alpha),\qquad
 \rho_\alpha(M)>\lambda_3(\alpha)
\]
for every \(0<\alpha<1\).

For \(H_{1,1,1,5}^{(4)}\) and \(F_{3,3,3}^{3}\), the residual is
\(\Phi-1\); for \(G_{1,1:0:2,2}^{3}\), it is \(\Psi-1\). Each of these
hypergraphs \(L\), with \(r\) its uniformity, satisfies
\(\rho_0(L)>4^{1/r}\)~\cite[Theorems~1--2 and~4--5]{LuMan}. Whenever the propagation is defined at
\(t=1/2\), the associated positive incidence assignment has only the
indicated residual. That residual is negative; otherwise
Lemmas~\ref{lem:residual-sign} and~\ref{lem:label-comparison} would imply
\(\rho_0(L)\leq4^{1/r}\).
Proposition~\ref{prop:branching-residuals}(2),(3) then shows that these
residuals remain negative throughout their propagation domains, while
part~(3) supplies a consistently strictly supernormal labeling outside
\(J_G\) for \(G_{1,1:0:2,2}^{3}\).
Lemmas~\ref{lem:residual-sign} and~\ref{lem:label-comparison}, together with
Theorem~\ref{thm:equality-families}, now give the strict comparison for every
possible \(K\).

Finally, Lemma~\ref{lem:two-bridge-alpha}(1) lifts the comparison from \(K\)
to \(G'\), Proposition~\ref{subgraph} from \(G'\) to \(G\), and
Theorem~\ref{thm:extension-expansion-alpha} from \(G\) to \(H\).
\end{proof}

Combining Theorems~\ref{neverexceedmain}, \ref{thm:equality-families},
\ref{thm:unique-equality-point}, and~\ref{thm:above-threshold} immediately
yields Theorem~\ref{thm:global-five-type}.

\section{Classification for a fixed parameter}\label{sec:fixed-alpha}
For \(k\geq3\) and \(0<\alpha<1\), set
\[
 t_{k,\alpha}=\frac{\lambda_k(\alpha)-2\alpha}
 {2(\lambda_k(\alpha)-\alpha)}.
\]

By Theorem~\ref{thm:unique-equality-point} and
Proposition~\ref{prop:branching-residuals}, the family
\(\mathcal B_{k,\alpha}\) from Definition~\ref{def:fixed-alpha-family}
has the following explicit description. It consists of the non-path hypergraphs in
Proposition~\ref{prop:luman-inputs}(2) which, with \(t=t_{k,\alpha}\),
satisfy the condition in the corresponding row:
\[
\begin{array}{c@{\qquad}l}
 E_{a,b,c}^{k} & H_E(t)\geq0,\\
 BD_n^{k},\ q=n-5 & t\in J_{BD}\ \text{and}\ H_{BD,q}(t)\geq0,\\
 F_{a,b,c}^{k}\ \text{or}\ H_{a,b,c,d}^{k} & \Phi(t)\geq1,\\
 G_{a,b:q:c,d}^{k} & t\in J_G\ \text{and}\ \Psi(t)\geq1.
\end{array}
\]

\begin{proof}[Proof of Theorem~\ref{thm:fixed-alpha-classification}]
By Theorem~\ref{thm:global-five-type}, loose paths lie strictly below
\(\lambda_k(\alpha)\), loose cycles and \(C_2^k\) attain it, and the third
and fifth alternatives lie strictly above it. In the fourth alternative,
Theorem~\ref{thm:unique-equality-point} and
Proposition~\ref{prop:branching-residuals} show that
\(\rho_\alpha(H)\leq\lambda_k(\alpha)\) precisely when
\(H\in\mathcal B_{k,\alpha}\). This proves both directions.
\end{proof}

\bibliographystyle{elsarticle-num}  
\bibliography{ref}  

@article{CooperDutle,
  author  = {Cooper, Joshua and Dutle, Aaron},
  title   = {Spectra of uniform hypergraphs},
  journal = {Linear Algebra and its Applications},
  volume  = {436},
  number  = {9},
  year    = {2012},
  pages   = {3268--3292},
  doi     = {10.1016/j.laa.2011.11.018}
}

@article{Qi2005,
  author  = {Qi, Liqun},
  title   = {Eigenvalues of a real supersymmetric tensor},
  journal = {Journal of Symbolic Computation},
  volume  = {40},
  number  = {6},
  year    = {2005},
  pages   = {1302--1324},
  doi     = {10.1016/j.jsc.2005.05.007}
}

@article{LuMan,
  author  = {Lu, Linyuan and Man, Shoudong},
  title   = {Connected hypergraphs with small spectral radius},
  journal = {Linear Algebra and its Applications},
  volume  = {509},
  year    = {2016},
  pages   = {206--227},
  doi     = {10.1016/j.laa.2016.07.013}
}

@article{ShanWangWang,
  author  = {Shan, Haiying and Wang, Zhiyi and Wang, Feifei},
  title   = {{$(\alpha,\beta)$}-labelling method for {$k$}-uniform hypergraph and its applications},
  journal = {Linear Algebra and its Applications},
  volume  = {642},
  year    = {2022},
  pages   = {30--49},
  doi     = {10.1016/j.laa.2022.02.005}
}

@incollection{Smith,
  author    = {Smith, J. H.},
  title     = {Some properties of the spectrum of a graph},
  editor    = {Guy, Richard K. and Hanani, Haim and Sauer, Norbert and Sch{\"o}nheim, J.},
  booktitle = {Combinatorial Structures and Their Applications},
  publisher = {Gordon and Breach},
  address   = {New York},
  year      = {1970},
  pages     = {403--406}
}

@article{Nikiforov,
  author  = {Nikiforov, Vladimir},
  title   = {Merging the {$A$}- and {$Q$}-spectral theories},
  journal = {Applicable Analysis and Discrete Mathematics},
  volume  = {11},
  number  = {1},
  year    = {2017},
  pages   = {81--107},
  doi     = {10.2298/AADM1701081N}
}

@article{WangShanWang2020,
  author  = {Wang, Feifei and Shan, Haiying and Wang, Zhiyi},
  title   = {On some properties of the {$\alpha$}-spectral radius of the {$k$}-uniform hypergraph},
  journal = {Linear Algebra and its Applications},
  volume  = {589},
  year    = {2020},
  pages   = {62--79},
  doi     = {10.1016/j.laa.2019.12.010}
}

@article{GuoZhou2020,
  author  = {Guo, Haiyan and Zhou, Bo},
  title   = {On the {$\alpha$}-spectral radius of uniform hypergraphs},
  journal = {Discussiones Mathematicae Graph Theory},
  volume  = {40},
  number  = {2},
  year    = {2020},
  pages   = {559--575},
  doi     = {10.7151/dmgt.2268}
}

@article{HouChangShi2020,
  author  = {Hou, Yuan and Chang, An and Shi, Chao},
  title   = {On the {$\alpha$}-spectra of uniform hypergraphs and its associated graphs},
  journal = {Acta Mathematica Sinica, English Series},
  volume  = {36},
  number  = {7},
  year    = {2020},
  pages   = {842--850},
  doi     = {10.1007/s10114-020-9487-x}
}

@article{Belardo2020GraphsWA,
  author  = {Wang, Jian Feng and Wang, Jing and Liu, Xiaogang and Belardo, Francesco},
  title   = {Graphs whose {$A_\alpha$}-spectral radius does not exceed 2},
  journal = {Discussiones Mathematicae Graph Theory},
  volume  = {40},
  number  = {2},
  year    = {2020},
  pages   = {677--690},
  doi     = {10.7151/dmgt.2288}
}

@book{berge,
  author    = {Berge, Claude},
  title     = {Hypergraphs: Combinatorics of Finite Sets},
  series    = {North-Holland Mathematical Library},
  volume    = {45},
  publisher = {North-Holland},
  address   = {Amsterdam},
  year      = {1989}
}

@article{QI2013228,
  author  = {Liqun Qi},
  title   = {Symmetric nonnegative tensors and copositive tensors},
  journal = {Linear Algebra and its Applications},
  volume  = {439},
  number  = {1},
  year    = {2013},
  pages   = {228--238},
  issn    = {0024-3795},
  doi     = {10.1016/j.laa.2013.03.015}
}

@article{SHAO20132350,
  author  = {Jia-Yu Shao},
  title   = {A general product of tensors with applications},
  journal = {Linear Algebra and its Applications},
  volume  = {439},
  number  = {8},
  year    = {2013},
  pages   = {2350--2366},
  issn    = {0024-3795},
  doi     = {10.1016/j.laa.2013.07.010}
}

@article{MR4048055,
  author     = {Lin, Hongying and Guo, Haiyan and Zhou, Bo},
  title      = {On the {$\alpha$}-spectral radius of irregular uniform
              hypergraphs},
  journal    = {Linear Multilinear Algebra},
  fjournal   = {Linear and Multilinear Algebra},
  volume     = {68},
  number     = {2},
  year       = {2020},
  pages      = {265--277},
  issn       = {0308-1087,1563-5139},
  mrclass    = {05C50 (05C65)},
  mrnumber   = {4048055},
  mrreviewer = {Anna\ A.\ Taranenko},
  doi        = {10.1080/03081087.2018.1502253}
}

@incollection{MR347860,
  author     = {Hoffman, Alan J.},
  title      = {On limit points of spectral radii of non-negative symmetric
              integral matrices},
  booktitle  = {Graph theory and applications ({P}roc. {C}onf., {W}estern
              {M}ichigan {U}niv., {K}alamazoo, {M}ich., 1972; dedicated to
              the memory of {J}. {W}. {T}. {Y}oungs)},
  series     = {Lecture Notes in Math.},
  volume     = {303},
  publisher  = {Springer, Berlin-New York},
  year       = {1972},
  pages      = {165--172},
  mrclass    = {15A48 (05C99)},
  mrnumber   = {347860},
  mrreviewer = {H.\ Minc}
}

@article{GUO20172616,
  author   = {Krystal Guo and Bojan Mohar},
  title    = {Digraphs with {Hermitian} spectral radius below 2 and their cospectrality with paths},
  journal  = {Discrete Mathematics},
  volume   = {340},
  number   = {11},
  year     = {2017},
  pages    = {2616--2632},
  note     = {In Memory of Horst Sachs},
  issn     = {0012-365X},
  doi      = {10.1016/j.disc.2017.01.018}
}

@article{GavrilyukMunemasa2023,
  author  = {Gavrilyuk, Alexander L. and Munemasa, Akihiro},
  title   = {Maximal digraphs whose {Hermitian} spectral radius is at most 2},
  journal = {Linear Algebra and its Applications},
  volume  = {658},
  year    = {2023},
  pages   = {331--349},
  doi     = {10.1016/j.laa.2022.11.007}
}

@article{MR683990,
  author     = {Cvetkovi{\'c}, Drago{\v{s}} and Doob, Michael and Gutman, Ivan},
  title      = {On graphs whose spectral radius does not exceed
              {$(2+\sqrt{5})\sp{1/2}$}},
  journal    = {Ars Combin.},
  fjournal   = {Ars Combinatoria},
  volume     = {14},
  year       = {1982},
  pages      = {225--239},
  issn       = {0381-7032},
  mrclass    = {05C50},
  mrnumber   = {683990},
  mrreviewer = {Peter\ Kov\'acs}
}

@article{BrouwerNeumaier1989,
  author  = {Brouwer, A. E. and Neumaier, A.},
  title   = {The graphs with spectral radius between 2 and
             {$\sqrt{2+\sqrt{5}}$}},
  journal = {Linear Algebra and its Applications},
  volume  = {114--115},
  year    = {1989},
  pages   = {273--276},
  doi     = {10.1016/0024-3795(89)90466-7}
}

@article{MR4548954,
  author     = {Wang, Dijian and Dong, Wenkuan and Hou, Yaoping and Li,
              Deqiong},
  title      = {On signed graphs whose spectral radius does not exceed
              {$\sqrt{2 + \sqrt5}$}},
  journal    = {Discrete Math.},
  fjournal   = {Discrete Mathematics},
  volume     = {346},
  number     = {6},
  year       = {2023},
  pages      = {Paper No. 113358, 21},
  issn       = {0012-365X,1872-681X},
  mrclass    = {05C50},
  mrnumber   = {4548954},
  mrreviewer = {James\ McKee},
  doi        = {10.1016/j.disc.2023.113358}
}

@article{WANG20081606,
  author   = {Jianfeng Wang and Qiongxiang Huang and Xinhui An and Francesco Belardo},
  title    = {Some notes on graphs whose spectral radius is close to
         {$\frac{3}{2}\sqrt{2}$}},
  journal  = {Linear Algebra and its Applications},
  volume   = {429},
  number   = {7},
  year     = {2008},
  pages    = {1606--1618},
  issn     = {0024-3795},
  doi      = {10.1016/j.laa.2008.04.034}
}

@article{MR2365422,
  author     = {Woo, Renee and Neumaier, Arnold},
  title      = {On graphs whose spectral radius is bounded by
              {$\frac32\sqrt2$}},
  journal    = {Graphs Combin.},
  fjournal   = {Graphs and Combinatorics},
  volume     = {23},
  number     = {6},
  year       = {2007},
  pages      = {713--726},
  issn       = {0911-0119,1435-5914},
  mrclass    = {05C50},
  mrnumber   = {2365422},
  mrreviewer = {Mirko\ Lepovi\'c},
  doi        = {10.1007/s00373-007-0745-9}
}

@article{MR3034539,
  author   = {Lan, Jingfen and Lu, Linyuan},
  title    = {Diameters of graphs with spectral radius at most
              {$\frac{3}{2}\sqrt{2}$}},
  journal  = {Linear Algebra Appl.},
  fjournal = {Linear Algebra and its Applications},
  volume   = {438},
  number   = {11},
  year     = {2013},
  pages    = {4382--4407},
  issn     = {0024-3795,1873-1856},
  mrclass  = {05C50},
  mrnumber = {3034539},
  doi      = {10.1016/j.laa.2012.12.047}
}
\end{document}